\newtheorem{theorem}{Theorem}[section]
\newtheorem{corollary}[theorem]{Corollary}
\newtheorem{lemma}[theorem]{Lemma}
\newtheorem{proposition}[theorem]{Proposition}
\theoremstyle{definition}
\newtheorem{remark}[theorem]{Remark}
\newtheorem{example}[theorem]{Example}
\numberwithin{equation}{section}
\def\ld{\backslash}
\def\ker#1{\mathrm{ker}(#1)}
\def\aut{{\mathrm{Aut}}}
\def\lmlt{{\mathrm{LMlt}}}
\def\aff#1{\mathrm{Aff}#1}
\def\Adj{\mathrm{Adj}}
\def\aff#1{\mathrm{Aff}#1}
\def\Z{\mathbb Z}
\def\Q{\mathcal Q}
\def\sym{\mathrm{Sym}}
\def\dis{\mathrm{Dis}}
\def\setof#1#2{\{#1\, : \,#2\}}
\def\Sym{\mathrm{Sym}}
\def\c#1{\mathrm{con}_{#1}}
\def\m{\mathfrak{ip}}
\def\ldiv{\backslash}
\begin{document}

\title{A universal algebraic approach to rack coverings}

\author[Bonatto]{Marco Bonatto}

\address[Bonatto]{IMAS--CONICET and Universidad de Buenos Aires, Pabell\'on~1, Ciudad Universitaria, 1428, Buenos Aires, Argentina}

\email{marco.bonatto.87@gmail.com}

\author[Stanovsk\'{y}]{David Stanovsk\'{y}}

\address[Stanovsk\'{y}]{Department of Algebra, Faculty of Mathematics and Physics, Charles University, Prague, Czech Republic}

\email{stanovsk@karlin.mff.cuni.cz}

\date{}
\thanks{Research partly supported by the GA\v CR grant 18-20123S}

\keywords{Quandle covering, extesion by constant cocycle, quandle homology, quandle identities, strongly abelian congruence, strongly solvable algebra.}

\subjclass[2010]{20N02, 57M27, 55N35, 08A30}
\date{\today}

\begin{abstract}
We study rack and quandle coverings from a universal algebraic viewpoint and we show how they can be understood using the notion of strongly abelian congruences. We provide an abstract characterization of several particular types of covering extensions, such as central and abelian ones.
We give a new characterization of simply connected quandles and we show that the categorical notion of normal extension coincides with the notion of central covering. We answer several questions from the papers of Clark, Saito and Vendramin \cite{CS} and \cite{CSV} about identities preserved by quandle coverings.
\end{abstract}

\maketitle

\section*{Introduction}\label{sec:intro}

%\comment{TODO - IMPROVE}
{\it Racks and quandles} are algebraic structures related to knot invariants \cite{J}, set-theoretic solutions to the Yang-Baxter equation and Hopf algebras \cite{AG}. 
There have been several attempts to build a comprehensive theory of rack and quandle extensions, for instance, \cite{AG, CP, CENS}.
The present paper is a continuation of our project to study rack extensions using the universal algebraic framework of the commutator theory \cite{comm}. Here we focus on covering extensions.

\emph{Coverings}, closely related to \emph{extensions by constant cocycles}, are one of the most important types of rack extensions, mainly because constant cocycles provide powerful invariants of knots \cite{CJKLS, CSV, Eis-unknot}. Nevertheless, coverings are important also from the algebraic perspective. For example, every quandle is a cover of a conjugation quandle, using a Cayley-like representation. 
Even \cite{Even} studied coverings from a categorical viewpoint and proved that these are exactly the central extensions with respect to the adjunction between the category of quandles and the category of projection quandles.  
Eisermann \cite{Eisermann} developed a theory of quandle coverings in analogy with the covering theory of topological spaces using a categorical language. One of the many results of Eisermann is a characteriation of simply connected quandles, as connected quandles for which every covering is trivial. Our first result is an alternative group-theoretic characterization (Theorem \ref{caratt simply connected}).

%In the present paper we study rack and quandle coverings from a universal algebraic viewpoint. This is a continuation of \cite{CP} in which we started to put rack extensions into the universal algebraic framework of the commutator theory \cite{comm}. 

We proceed with several universal algebraic aspects of rack extensions. The first one (Section~\ref{sec:identities}) is motivated by papers of Clark, Saito and Vendramin \cite{CS,CSV} who raised the question to what extent \emph{coverings preserve identities}. They focused on the subclass of abelian covering extensions, and on a special kind of identities called inner identities, written by composition of inner mappings (or left translations, in our terminology). We extend some of their results to arbitrary coverings and arbitrary identities, solving some of the open problems posted in \cite{CS,CSV}. Many identities are not preserved (Section \ref{negative examples}), however, every cover of a connected $n$-symmetric quandle is again $n$-symmetric (Theorem \ref{thm:symmetric}).

In Section \ref{sec:ua}, we show that coverings are captured by the universal algebraic notion of \emph{strongly abelian congruences}, and the derived notion of \emph{strongly solvable algebras}, as developed in \cite[Section 3]{TCT}. Covers are precisely the extensions over strongly abelian congruences (this is essentially contained in Proposition \ref{l:strongly_abelian iff under_lambda}).
Consequently, racks that are built from a trivial rack by a finite sequence of coverings (called \emph{multipermution racks} in the context of set-theoretic solutions of the Yang-Baxter equation) are exactly the strongly solvable racks, and we also prove that they are axiomatized by so called \emph{reductive laws} that were introduced in \cite{PR} (Theorem \ref{thm:strongly_solvable iff reductive}). 
Some of the results apply in a wider setting, for classes of left quasigroups where all terms admit a particular syntactic form, or where the Cayley kernel is always a congruence. This includes some of the other algebraic structures behind the Yang-Baxter equation, such as Rump's cycle sets \cite{BKSV,Rump}. %Strongly solvable algebras of this type are indeed related to so called {\it multipermutation solutions}. 

In the rest of Section \ref{sec:ua}, we look at coverings which are also central in the sense of commutator theory \cite{CP,comm}.
First, we prove that every strongly solvable rack is nilpotent (Theorem \ref{nilpotency of lmlt and strongly solvability}). However, not every strongly abelian congruence is central, a characterization is given in Proposition \ref{central iff}. We also show that central coverings are the same thing as normal extensions with respect to the adjuction between the category of quandles and the category of projection quandles of \cite{Montoli,Even} (Theorem \ref{thm normal ext}).  

In section \ref{sec:ab cov}, we look at the concept of \emph{abelian extension} from \cite{CSV} and put it in our universal algebraic context. We find several abstract characterizations (Propositions \ref{abelian cov iff} and \ref{character ab cov}). 

The last section is dedicated to the properties of the extension over the largest idempotent factor.

%For instance we used it to characterize connected quandle covers preserving the {\it displacement group} in Proposition \ref{Prop: Extensions with same Trans}.  

The preliminaries on racks, quandles and extensions are summarized in the introductory Sections \ref{sec:prelim} and \ref{sec:coverings}. The proofs of all unproved statements can be found in \cite{GB,EN,hsv}. We refer to \cite[Section 4 and 5]{CP} for an introduction into commutator theory in the context of racks and quandles.

\section{Terminology and basic facts}\label{sec:prelim}

\subsection{Left quasigroups, racks and quandles}

%We will write mappings acting on the left, hence conjugation in groups will be denoted by $x^y=yxy^{-1}$, and consequently, the commutator will be defined as $[x,y]=y^xy^{-1}=xyx^{-1}y^{-1}$.

A \emph{left quasigroup} is a binary algebraic structure $Q=(Q,*,\ld)$ such that  \[x\ast (x\ld y ) =y=x\ld (x\ast y)\] for every $x,y\in Q$.
The permutations $L_x:Q\to Q$, $y\mapsto x*y$, will be called \emph{(left) translations} (the adjective `left' will usually be dropped).

Two important permutation groups are associated to every left quasigroup:
the (left) \emph{multiplication group}, generated by all (left) translations,
\[\lmlt (Q)=\langle L_a:\ a\in Q\rangle\leq\mathrm{Sym}(Q),\]
and its subgroup, the \emph{displacement group}, defined by
\[\dis (Q)=\langle L_a L_b^{-1}:\ a, b\in Q\rangle\leq\lmlt(Q).\]
For $a\in Q$, we will denote $\dis(Q)_a$ the point stabilizer of $a$, and $\widehat{L_a}$ the automorphism of $\dis(Q)$ given by $\widehat{L_a}(\alpha)=L_a\alpha L_a^{-1}$.
We will denote $\sigma_Q$ the equivalence relation on $Q$ defined by
\[ a\,\sigma_Q\, b\ \Leftrightarrow\ \dis(Q)_a=\dis(Q)_b. \]
A left quasigroup $Q$ is called \emph{connected} if its left multiplication group acts transitively on $Q$. 

A left quasigroup is called a \emph{rack} if it is left distributive, i.e. if  \[ x*(y*z) =(x*y)*(x*z) \] holds for every $x,y,z\in Q$.
A \emph{quandle} is an idempotent rack, i.e. the identity $x*x=x$ holds for every $x\in Q$. In a rack, every translation is an automorphism.

\begin{example}
A rack $Q$ is called a \emph{permutation rack} if the operation does not depend on the first argument, i.e., the rack operation is
$a*b=f(b)$ where $f$ is a permutation of $Q$. If $Q$ is a quandle then $f$ is the identity on $Q$ and $Q$ is called a \emph{projection quandle}. If $|Q|=1$ then $Q$ is called {\it trivial}.
\end{example}

\begin{example}
Let $G$ be a group, $f$ its automorphism and $H$ a subgroup of $\mathrm{Fix}(f)$ (the subgroup of fixed points of $f$). We will denote $\mathcal{Q}(G,H,f)$ the quandle $(G/H,*,\ld)$ with the operations defined by 
$$aH\ast bH=af(a^{-1}b)H, \quad aH\ld bH=af^{-1}(a^{-1}b)H,$$ and call it a \emph{coset quandle}. If $H$ is the trivial group, $Q$ is called \emph{principal} over the group $G$ and denoted by $\mathcal{Q}(G,f)$. Moreover, if $G$ is abelian, then $Q$ is called \emph{affine} and an alternative notation, $\aff(G,f)$, is also used. 
\end{example}

A quandle is called \emph{principal} if it is isomorphic to $\mathcal Q(G,f)$, for some $G,f$. 
If $Q$ is a rack, then both $\dis(Q)$ and $\lmlt(Q)$ are normal subgroups of $\aut(Q)$. If $Q$ is a quandle the orbits of $\lmlt{(Q)}$ and of $\dis(Q)$ are the same, hence $Q$ is connected if and only if $\dis (Q)$ is transitive. 
Connected quandles can be represented as coset quandles over their displacement groups.
\begin{proposition}\cite[Proposition 3.5]{hsv}
Let $Q$ be a connected quandle and $a\in Q$. Then \[ Q\cong \mathcal{Q}(\dis(Q),\dis(Q)_a,\widehat{L_a}).\]
\end{proposition}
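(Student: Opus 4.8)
The plan is to exhibit an explicit isomorphism via the orbit–stabilizer correspondence, relying throughout on two facts valid in any rack: every translation $L_x$ is an automorphism, and for any $\alpha\in\aut(Q)$ one has the conjugation rule $\alpha L_x\alpha^{-1}=L_{\alpha(x)}$ (this is just a rewriting of $\alpha(x*y)=\alpha(x)*\alpha(y)$, i.e. $\alpha L_x=L_{\alpha(x)}\alpha$).

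First I would check that the coset quandle on the right-hand side is legitimately defined, i.e. that $\dis(Q)_a\le\mathrm{Fix}(\widehat{L_a})$, as the construction in the earlier example requires. Take $\alpha\in\dis(Q)_a$. Since $\dis(Q)\le\aut(Q)$, the conjugation rule gives $\alpha L_a\alpha^{-1}=L_{\alpha(a)}=L_a$ because $\alpha(a)=a$; hence $\alpha$ commutes with $L_a$ and $\widehat{L_a}(\alpha)=L_a\alpha L_a^{-1}=\alpha$. Here I also note that $\widehat{L_a}$ really is an automorphism of $\dis(Q)$: as $L_a\in\aut(Q)$ and $\dis(Q)$ is normal in $\aut(Q)$, the element $L_a$ normalizes $\dis(Q)$.

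Next I would set up the candidate map. Writing $G=\dis(Q)$ and $H=\dis(Q)_a$, define $\phi\colon G/H\to Q$ by $\phi(\alpha H)=\alpha(a)$. This is well defined and injective because $\alpha H=\beta H$ iff $\beta^{-1}\alpha\in H$ iff $\beta^{-1}\alpha(a)=a$ iff $\alpha(a)=\beta(a)$; it is surjective because $Q$ is connected, so (as noted in the excerpt) $\dis(Q)$ acts transitively, making every element of $Q$ of the form $\alpha(a)$. Thus $\phi$ is a bijection, which is exactly the orbit–stabilizer correspondence.

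The heart of the argument, and the only genuine computation I expect, is verifying that $\phi$ preserves $*$. Unwinding the coset operation, $\alpha H*\beta H=\alpha\,\widehat{L_a}(\alpha^{-1}\beta)\,H$, so $\phi(\alpha H*\beta H)=\alpha L_a\alpha^{-1}\beta L_a^{-1}(a)$. Idempotence now gives $L_a(a)=a*a=a$, hence $L_a^{-1}(a)=a$, reducing this to $\alpha L_a\alpha^{-1}\beta(a)$. On the other side, $\phi(\alpha H)*\phi(\beta H)=\alpha(a)*\beta(a)=L_{\alpha(a)}(\beta(a))=\alpha L_a\alpha^{-1}\beta(a)$ by the conjugation rule, so the two expressions coincide. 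Finally, a bijective $*$-homomorphism between left quasigroups automatically respects $\ld$ (apply $\phi$ to $x*(x\ld y)=y$), so $\phi$ is the desired isomorphism. The main obstacle is entirely bookkeeping: confirming the coset quandle is well defined before invoking it, and keeping straight the two places where idempotence ($L_a(a)=a$) and the automorphism identity enter the central computation.
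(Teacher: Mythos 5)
Your proof is correct, and it is essentially the standard argument (the paper itself defers the proof to \cite{hsv}, where the same orbit--stabilizer correspondence $\alpha\,\dis(Q)_a\mapsto\alpha(a)$ is used, with the same verification that translations conjugate as $\alpha L_x\alpha^{-1}=L_{\alpha(x)}$). All the steps check out, including the preliminary verification that $\dis(Q)_a\le\mathrm{Fix}(\widehat{L_a})$ and the observation that a bijective $*$-homomorphism of left quasigroups automatically preserves $\ld$.
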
 
Recall that a group acting on a set is {\it semiregular} if the pointwise stabilizers of the action are trivial. Transitive and semiregular groups are called {\it regular}.

\begin{proposition}\label{caratt principal}\cite[Proposition 2.1]{Principal} 
A connected quandle $Q$ is principal if and only if $\dis(Q)$ is regular on $Q$. In such a case, $Q\cong \mathcal{Q}(\dis(Q),\widehat{L_a})$.
\end{proposition}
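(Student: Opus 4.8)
The statement has the form of a biconditional together with an explicit isomorphism, so I would prove the two implications separately and read off the promised isomorphism from the easier direction.

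The plan is to first dispose of the direction asserting that regularity implies principality, since it follows almost immediately from the coset representation recalled just above (Proposition 3.5 of \cite{hsv}). Because $Q$ is connected, that result gives $Q \cong \mathcal{Q}(\dis(Q), \dis(Q)_a, \widehat{L_a})$ for any fixed $a \in Q$. If $\dis(Q)$ is regular on $Q$, then it is in particular semiregular, so the point stabilizer $\dis(Q)_a$ is trivial; substituting $H = \dis(Q)_a = \{e\}$ into the coset construction yields $Q \cong \mathcal{Q}(\dis(Q), \widehat{L_a})$, which is principal by definition. This simultaneously establishes the ``in such a case'' clause.

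For the converse I would work directly with a principal model $Q = \mathcal{Q}(G, f)$ and compute the displacement group explicitly. From the definition of the operation, the translation is $L_a(b) = a f(a^{-1} b)$, and inverting gives $L_b^{-1}(y) = b f^{-1}(b^{-1} y)$. The crucial computation is then
\[ (L_a L_b^{-1})(y) = a f\bigl(a^{-1} b\, f^{-1}(b^{-1} y)\bigr) = a f(a^{-1} b) b^{-1} y, \]
where I use that $f$ is an automorphism to split the argument across the product. Thus each generator $L_a L_b^{-1}$ acts on $G$ as left multiplication by the element $a f(a^{-1} b) b^{-1}$, and consequently every element of $\dis(Q) = \langle L_a L_b^{-1} : a, b \in G\rangle$ is left multiplication by some element of $G$. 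Since left multiplication in a group is fixed-point free unless the multiplier is the identity, the action of $\dis(Q)$ is semiregular; connectedness supplies transitivity, and the two together give regularity.

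The only genuinely technical point is the displacement computation above, and the one subtlety to be careful about is that $\dis(Q)$ is \emph{generated} by the maps $L_a L_b^{-1}$ rather than consisting of them outright. This causes no trouble, however, because the left multiplications form a subgroup of $\mathrm{Sym}(G)$ (the image of the left-regular representation of $G$), so the group generated by the $L_a L_b^{-1}$ still lands inside this subgroup, and the semiregularity argument therefore applies to all of $\dis(Q)$. Everything else is a direct application of the definitions and of the already-quoted coset representation.
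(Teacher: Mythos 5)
Your proof is correct. Note that the paper does not prove this statement at all — it is quoted verbatim from \cite[Proposition 2.1]{Principal} — so there is no in-paper argument to compare against; your two-step argument (the coset representation of \cite[Proposition 3.5]{hsv} for the backward direction, and the explicit computation showing that each $L_aL_b^{-1}$ acts on $\mathcal{Q}(G,f)$ as left multiplication by $af(a^{-1}b)b^{-1}$ for the forward direction) is the standard proof, and your displacement computation is consistent with the description of $\dis(\mathcal{Q}(G,H,f))$ as the left action of $[G,f]$ recalled in Remark~\ref{connected by sub}.
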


\begin{remark}\label{connected by sub}
	%\comment{We might include  the remark in the proof of Prop \ref{Prop: Extensions with same Trans}}
According to \cite[Lemma 3.11]{GiuThe}, the action of the displacement group of a coset quandle $Q=\mathcal{Q}(G,H,f)$ is given by the left action of $[G,f]=\langle xf(x)^{-1}: \ x \in G\rangle$ on the set $G/H$. If $Q$ is connected then $\dis(Q)=[\dis(Q),\widehat{L_a}]$ and $$Core_{\dis(Q)}(\dis(Q)_a)=\bigcap_{g\in \dis(Q)}\dis(Q)_{g(a)}=\bigcap_{b\in Q}\dis(Q)_b=1$$ since all the point stabilizers in $\dis(Q)$ are conjugate (recall that the core of a subgroup $H$ is the largest normal subgroup contained in $H$).
\end{remark}

\subsection{Congruences}

A congruence of a left quasigroup $Q$ is an equivalence relation on $Q$ invariant with respect to the operations $*,\ld$.
Let $f:Q\to R$ be a homomorphism. Its kernel, $\ker f=\setof{(x,y)\in Q^2}{f(x)=f(y)}$, is a congruence of $Q$. 
By virtue of the first isomorphism theorem, homomorphic images of $Q$ and congruences of $Q$ are essentially the same things. 

For a congruence $\alpha$ of $Q$, the blocks will be denoted by $[a]_\alpha$ (omitting the subscript whenever there is no risk of confusion) and the correspondent factor by $Q/\alpha$. If $Q/\alpha$ is connected then all blocks of $\alpha$ have the same cardinality and $\alpha$ is said to be {\it uniform}. Since the homomorphic image of a connected left quasigroup is connected, the congruences of connected left quasigroups are uniform. If $Q$ is idempotent, congruence blocks are subalgebras. If $Q$ is a connected quandle, the blocks of a given congruence are pairwise isomorphic \cite[Proposition 2.5]{CP}.

For every congruence $\alpha$ of a rack $Q$, there is a group homomorphism $\pi_\alpha:\lmlt (Q)\to\lmlt (Q/\alpha)$, defined by $L_a\mapsto L_{[a]_\alpha}$. 
Observe that, for every $h\in\lmlt (Q)$ and $x\in Q$, we have 
\begin{equation}\label{dag}
\pi_\alpha(h)([x]_\alpha)=[h(x)]_\alpha. 
\end{equation} 
%Indeed, with $h=L_{a_1}^{k_1}\dots L_{a_n}^{k_n}$, we have 
%\[\pi_\alpha(h)([b]_\alpha)=L_{[x_1]_\alpha)}^{k_1}\dots L_{[x_n]_\alpha}^{k_n}([b]_\alpha)=[L_{a_1}^{k_1}\dots L_{a_n}^{k_n}(b)]_\alpha=[h(b)]_\alpha\] 
%because $\alpha$ is a congruence. 
Moreover, the mapping $\pi_\alpha$ restricts and corestricts to the displacements groups. We will denote the kernel of $\pi_\alpha$ by $\lmlt^\alpha$, and the kernel of its restriction by $\dis^\alpha$. Note that
\begin{eqnarray*}
\lmlt^\alpha&=&\setof{h\in \lmlt(Q)}{h(a)\,\alpha\ a \text{ for every }a\in Q}\label{kernel},\\
\dis^\alpha&=&\setof{h\in \dis(Q)}{h(a)\,\alpha\ a \text{ for every }a\in Q}\label{kernel of dis}.
\end{eqnarray*}
For $a\in Q$, we define the {\it block stabilizer} of $a$ by \[ \lmlt(Q)_{[a]_\alpha}=\pi_\alpha^{-1}(\lmlt(Q/\alpha)_{[a]_\alpha})=\setof{h\in \lmlt(Q)}{h(a)\,\alpha \, a}.\] 
The point stabilizer $\lmlt(Q)_a$ and the kernel $\lmlt^\alpha$ are contained in the block stabilizer. Similar inclusions holds for the subgroups $\dis(Q)_a$, $\dis^\alpha$ and $\dis(Q)_{[a]_\alpha}=\pi_\alpha^{-1}(\dis(Q/\alpha)_{[a]_\alpha})$. 

If $Q/\alpha$ is connected, then for every $a_0,a\in Q$ there exists $h\in \lmlt(Q)$ with $h([a_0])=h([a])$, and so
\begin{equation*}\label{kernel=core}
\lmlt^\alpha=\bigcap_{[a]_\alpha\in Q/\alpha} \lmlt(Q)_{[a]_\alpha}=\bigcap_{h\in \lmlt(Q)} h \lmlt(Q)_{[a_0]_\alpha}h^{-1}=Core_{\lmlt(Q)}(\lmlt(Q)_{[a_0]})
\end{equation*}
and similarly $\dis^\alpha=Core_{\lmlt(Q)}(\dis(Q)_{[a_0]})$. The following proposition gives a criterion for connectedness in terms of the action of the block stabilizers.

\begin{proposition}\label{prop:connected ext}\cite[Proposition 1.3]{GB} 
Let $Q$ be a rack (resp. a quandle) and $\alpha\in Con(Q)$. Then $Q$ is connected if and only if $Q/\alpha$ is connected and $\lmlt(Q)_{[a]_\alpha}$ (resp. $\dis(Q)_{[a]_\alpha}$) is transitive on $[a]_{\alpha}$ for every $a\in Q$.
\end{proposition}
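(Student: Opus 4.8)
The statement is an equivalence, so the plan is to prove the two implications separately, working throughout with the group homomorphism $\pi_\alpha\colon\lmlt(Q)\to\lmlt(Q/\alpha)$ and with identity \eqref{dag}, which lets me read off the action of $\lmlt(Q)$ on blocks from the action of $\lmlt(Q/\alpha)$ on $Q/\alpha$. The key translation I would use repeatedly is that, by \eqref{dag}, the block stabilizer $\lmlt(Q)_{[a]_\alpha}$ is exactly $\pi_\alpha^{-1}(\lmlt(Q/\alpha)_{[a]_\alpha})$, as already recorded in the text. I would prove the rack version using $\lmlt$, and then observe that the quandle version is literally the same argument after replacing $\lmlt$ by $\dis$ everywhere: this is legitimate because for a quandle connectedness is equivalent to transitivity of $\dis(Q)$ (the orbits of $\lmlt$ and of $\dis$ coincide), and because $\pi_\alpha$ restricts and corestricts to a surjection $\dis(Q)\to\dis(Q/\alpha)$.

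For the forward implication, assume $Q$ is connected. That $Q/\alpha$ is connected is immediate, since a homomorphic image of a connected left quasigroup is connected. For the transitivity of the block stabilizer, I fix $a\in Q$ and an arbitrary $b\in[a]_\alpha$; connectedness gives $h\in\lmlt(Q)$ with $h(a)=b$, and since $b\,\alpha\,a$ we have $h\in\lmlt(Q)_{[a]_\alpha}$ by the definition of the block stabilizer. Hence every element of the block is reachable from $a$ by an element of the block stabilizer, which is exactly transitivity on $[a]_\alpha$.

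The substantive direction is the converse. Assume $Q/\alpha$ is connected and each block stabilizer acts transitively on its block, and fix $a,b\in Q$; I must produce $h\in\lmlt(Q)$ with $h(a)=b$. First I connect at the level of the quotient: connectedness of $Q/\alpha$ gives $g\in\lmlt(Q/\alpha)$ with $g([a]_\alpha)=[b]_\alpha$, and since $\pi_\alpha$ is surjective I lift $g$ to some $\tilde g\in\lmlt(Q)$. By \eqref{dag}, $[\tilde g(a)]_\alpha=\pi_\alpha(\tilde g)([a]_\alpha)=[b]_\alpha$, so $\tilde g(a)$ lies in the same block as $b$. Now I correct inside that block: applying transitivity of $\lmlt(Q)_{[b]_\alpha}$ on $[b]_\alpha$ to the two elements $\tilde g(a)$ and $b$, I obtain $k\in\lmlt(Q)_{[b]_\alpha}$ with $k(\tilde g(a))=b$. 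Then $h=k\tilde g$ satisfies $h(a)=b$, so $\lmlt(Q)$ is transitive and $Q$ is connected.

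The proof is short, and there is no serious obstacle; the only points that require care are that $\pi_\alpha$ (and, in the quandle case, its restriction to displacement groups) is surjective, so that the lift $\tilde g$ exists, together with the clean bookkeeping provided by \eqref{dag}. The conceptual content is entirely the two-step \emph{connect the blocks, then connect within a block} decomposition, where the first step consumes connectedness of $Q/\alpha$ and the second consumes transitivity of the block stabilizers.
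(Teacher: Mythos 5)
Your proof is correct; the paper itself gives no proof of this proposition (it is quoted from \cite[Proposition 1.3]{GB}), and your two-step argument --- connect blocks via a lift through the surjection $\pi_\alpha$, then connect within a block via the block stabilizer --- is exactly the standard argument one expects there, with the $\dis$ version following verbatim since $\pi_\alpha$ corestricts onto $\dis(Q/\alpha)$ and quandle connectedness is transitivity of $\dis$. The only point worth making explicit is that in the forward direction the block stabilizer genuinely acts on $[a]_\alpha$ (it permutes that block by \eqref{dag}), so exhibiting the single orbit of $a$ as the whole block does yield transitivity.
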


In \cite{CP} we investigated the interplay between congruences and normal subgroups of the left multiplication group. For every congruence $\alpha$ of a rack $Q$ we define the \emph{displacement group relative to $\alpha$} as
\[\dis_{\alpha} =\langle L_a L_b^{-1}:\ a\, \alpha \, b\rangle.\]
If $\alpha=1_Q$ we recover the definition of the displacement group of $Q$. 
%For a subgroup $N\leq \Sym{(S)}$ we denote by $a^N$ the orbit of $a\in S$ under the action of $N$. 
For every normal subgroup $N$ of $\lmlt(Q)$ we can define two congruences: let $\mathcal{O}_N$ be the orbit decomposition of $Q$ with respect to the action of $N$, i.e. $[a]_{\mathcal{O}_N}=\{g(a):g\in N
\}$ for every $a\in Q$, and let
\begin{eqnarray*}
\c{N}&=&\setof{(a,b)\in Q\times Q}{L_a L_b^{-1}\in N}.
\end{eqnarray*}
Clearly, $\mathcal{O}_N\leq \c{N}$.
%In \cite[Lemma 1.3]{EG} the orbit decomposition was proved to be a rack congruence also for normal subgroups of $\lmlt(Q)$ and we will use it in Lemma \ref{nilpotency of lmlt and strongly solvability}.

%notion: a group $G$ acting of on a set $Q$ is $\alpha$-semiregular, where $\alpha$ is an equivalence on $Q$ if whenever $h(a)=a$ then $h(b)=b$ for every $a\,\alpha\,b$ and $h\in G$. If $\alpha=Q\times Q$ we recover the usual definition of {\it semiregular} group, i.e. a group acting on $Q$ with trivial point stabilizers. We also used the following

\section{Covers, coverings and covering extensions}\label{sec:coverings}

\subsection{The Cayley kernel and covering homomorphisms}\label{ss:cayley}
The \emph{Cayley representation} of a left quasigroup $Q$ is the mapping \[ L_Q:Q\to\Sym(Q), \qquad x\mapsto L_x.\] 
Its kernel, \[\lambda_Q=\setof{(x,y)\in Q^2}{L_Q(x)=L_Q(y)}=\setof{(x,y)\in Q^2}{L_x=L_y},\] will be called the \emph{Cayley kernel} of $Q$. Left quasigroups with trivial Cayley kernel are called \emph{faithful}. If $Q/\alpha$ is faithful, then $\lambda_Q\leq \alpha$. Indeed, if $L_a=L_b$ then $L_{[a]_\alpha}=L_{[b]_\alpha}$ and so $[a]_\alpha=[b]_\alpha$ since $Q/\alpha$ is faithful.

The Cayley kernel is not always a congruence of $Q$. If it is, we call $Q$ a \emph{Cayley left quasigroup}. All racks are Cayley left quasigroups, and so are some of the other types of left quasigroups related to the set-theoretical solutions of the Yang-Baxter equation \cite{JPZ-retraction,Rump}. In this context, the Cayley kernel is known as the \emph{retraction relation}. The factor $Q/\lambda_Q$ is called the \emph{retract} of $Q$, and $Q$ is called \emph{$n$-multipermutational} if the $n$-th retract (i.e., the rectract of the rectract of \dots) is trivial \cite{ESS}.

% Note that for left quasigroups we can define the equivalence relation $\lambda_Q$ in the same fashion as for racks, but in general this is not a congruence. We say that a variety of left quasigroups $\mathcal{V}$ is a {\it Cayley} variety if the relation $\lambda_Q$ is a congruence for every $Q\in \mathcal{V}$. Examples of such varieties are indeed the variety of racks and the variety of cycle set defined in \cite{Rump} as the left quasigroups arising from involutive solutions of Yang-Baxter equation. For such algebraic structures the analogous of the congruence $\lambda_Q$ has been defined under the name of \emph{retraction relation} in \cite{ESS}.

Following \cite{Eisermann}, a \emph{covering homomorphism} is any surjective homomorphism of left quasigroups whose kernel is contained in the Cayley kernel. In other words, $f$ is a covering if $f(x)=f(y)$ implies $L_x=L_y$ for every $x,y$. A left quasigroup $Q$ is called a \emph{cover} of $R$ if there is a covering homomorphism $Q\to R$. In particular, $Q$ is a cover of $Q/\alpha$ whenever $\alpha\leq \lambda_Q$ (equivalently, if $\dis_\alpha=1$), using the natural projection $Q\to Q/\alpha$.

In racks, $L_Q$ is a homomorphism into $\aut(Q)$ with respect to the conjugation operation. The map $L_Q$ is the analog of the Cayley representation for groups but, unlike for groups, $L_Q$ is not necessarily injective. %Note that faithful racks are quandles.

\begin{example}
Every rack is a cover of a conjugation quandle, using the Cayley homomorphism $L_Q$. 
\end{example}

%\subsection{Coverings and the automorphism group}

%The following fact translates the covering property into a property of the automorphism groups. 
%A different proof of the fact can be found in \cite[Proposition 2.49]{Eisermann} and perhaps also elsewhere.

%The blocks of $\lambda_Q$ with respect to the action of $\lmlt(Q)$ as $\lambda_Q$ is a congruence. It turns out that they are also blocks with respect the action of $\aut(Q)$ and the canonical homomorphism $\pi_{\lambda_Q}$ defined as in \eqref{dag}	can be extended to a homomorphism of groups between the automorphism groups of $Q$ and $Q/\lambda_Q$.

For the Cayley kernel, the homomorphism $\pi_{\lambda_Q}$ extends to the automorphism group, using the expression \eqref{dag}.

\begin{proposition}\label{p:pi_aut}
Let $Q$ be a rack. Then the mapping
\[ \pi_{\lambda_Q}:	\aut(Q)\to \aut(Q/\lambda_Q), \qquad \pi_{\lambda_Q}(h)([x])=[h(x)] \]
is a well defined group homomorphism and $\ker{\pi_{\lambda_Q}}= C_{\aut(Q)}(\lmlt(Q))$.	
\end{proposition}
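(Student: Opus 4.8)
The plan is to build everything on the conjugation identity for rack automorphisms. For any $h\in\aut(Q)$ and $x,z\in Q$, applying $h$ to $x*z$ and using that $h$ preserves $*$ gives $h L_x=L_{h(x)}h$, that is,
\[ L_{h(x)}=h L_x h^{-1}. \]
This single formula drives all three claims.

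First I would check well-definedness. Since $Q$ is a rack, $\lambda_Q$ is a congruence (racks form a Cayley class), so $Q/\lambda_Q$ is again a rack. To see that $[x]\mapsto[h(x)]$ does not depend on the representative, suppose $x\,\lambda_Q\,y$, i.e.\ $L_x=L_y$. Then by the conjugation identity $L_{h(x)}=hL_xh^{-1}=hL_yh^{-1}=L_{h(y)}$, hence $h(x)\,\lambda_Q\,h(y)$. Thus $h$ preserves the congruence $\lambda_Q$ and therefore descends to a well-defined map $\pi_{\lambda_Q}(h)$ on $Q/\lambda_Q$. Because $h$ is an automorphism of $Q$ and $\lambda_Q$ is a congruence, the induced map is an endomorphism of $Q/\lambda_Q$; applying the same construction to $h^{-1}$ yields a two-sided inverse, so $\pi_{\lambda_Q}(h)\in\aut(Q/\lambda_Q)$.

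The homomorphism property is then purely formal: for $h_1,h_2\in\aut(Q)$,
\[ \pi_{\lambda_Q}(h_1h_2)([x])=[h_1h_2(x)]=\pi_{\lambda_Q}(h_1)\big(\pi_{\lambda_Q}(h_2)([x])\big), \]
so $\pi_{\lambda_Q}(h_1h_2)=\pi_{\lambda_Q}(h_1)\pi_{\lambda_Q}(h_2)$. Finally, for the kernel I would unwind the definitions. An element $h$ lies in the kernel precisely when $\pi_{\lambda_Q}(h)$ is the identity of $Q/\lambda_Q$, i.e.\ when $h(x)\,\lambda_Q\,x$, equivalently $L_{h(x)}=L_x$, for every $x\in Q$. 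Invoking the conjugation identity once more, $L_{h(x)}=hL_xh^{-1}$, so this condition reads $hL_xh^{-1}=L_x$, i.e.\ $h$ commutes with $L_x$ for all $x\in Q$. Since the translations $L_x$ generate $\lmlt(Q)$, commuting with all of them is the same as commuting with the whole group, which is exactly $h\in C_{\aut(Q)}(\lmlt(Q))$. This gives $\ker{\pi_{\lambda_Q}}=C_{\aut(Q)}(\lmlt(Q))$.

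There is no serious obstacle here: the whole argument rests on the conjugation identity $L_{h(x)}=hL_xh^{-1}$, which encodes that automorphisms intertwine translations. The only place where rack-ness (as opposed to a bare left quasigroup) is genuinely used is in guaranteeing that $\lambda_Q$ is a congruence, so that $Q/\lambda_Q$ is defined and $\pi_{\lambda_Q}(h)$ can be a homomorphism; the rest is formal manipulation valid in any setting where the conjugation identity holds.
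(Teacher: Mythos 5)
Your proof is correct and follows essentially the same route as the paper: both arguments rest on the conjugation identity $L_{h(x)}=hL_xh^{-1}$ to establish well-definedness, and both unwind the kernel condition $L_{h(x)}=L_x$ into $h$ commuting with every translation, hence with all of $\lmlt(Q)$. Your version merely spells out the "clearly" steps (the homomorphism property and the fact that $\lambda_Q$ is a congruence for racks) in more detail.
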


\begin{proof}
Let $h\in \aut(Q)$.	Note that the mapping $\pi_{\lambda_Q}(h)$ is well defined: if $a\, \lambda_Q \, b$, then $L_a=L_b$, and thus also $L_{h(a)}=h L_a h^{-1}=h L_b h^{-1}=L_{h(b)}$, hence $h(a)\, \lambda_Q\, h(b)$. Clearly, the mapping $\pi_{\lambda_Q}(h)$ is an automorphism of $Q/\lambda_Q$ and $\pi_{\lambda_Q}$ is a group homomorphism. To calculate the kernel, $\pi_{\lambda_Q}(h)=1$ if and only if $h(a)\,\lambda_Q\,a$ for every $a\in Q$, which means $L_{h(a)}=h L_a h^{-1}=L_a$ for every $a\in Q$, i.e. $h\in C_{\aut(Q)}(\lmlt(Q))$. 
\end{proof}

%\begin{corollary}
%If $Q$ is homogeneous then $\lambda_Q$ is uniform.
%\end{corollary}

%\begin{proof}
%For the bijection between blocks $[a]$ and $[b]$, take $h|_{[a]}$ where $h\in\aut(Q)$ such that $h(a)=b$.
%\end{proof}

%\red{I think $\aut(Q/\lambda_Q)$ is transitive over $Q/\lambda_Q$  (in particular, if $Q$ is {\it homogeneous}) then $\lambda_Q$ is a uniform congruence, and the transitive action of the automorphism group of $Q/\lambda_Q$ provides an isomorphism between any two different blocks of $\lambda_Q$.}
%\begin{corollary}
%Let $Q$ be a rack. If $Q/\lambda_Q$ is homogeneous then $\lambda_Q$ is a uniform congruence.
%\end{corollary} 

As a special case of Proposition \ref{p:pi_aut}, we obtain the following observation by Eisermann. % and its group theoretical corollary.

\begin{corollary}\label{prop:covering_central}\cite[Proposition 2.49]{Eisermann}
Let $Q$ be a rack. Then $\lmlt^{\lambda_Q}=Z(\lmlt(Q))$.
\end{corollary}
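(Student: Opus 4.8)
The plan is to read off the statement as a direct restriction of Proposition \ref{p:pi_aut}. The crucial structural input is that $Q$ is a rack, so every translation is an automorphism and hence $\lmlt(Q)\leq\aut(Q)$; both groups therefore act on the retract $Q/\lambda_Q$ by the same formula \eqref{dag}. This is what lets me transfer the centralizer computation of Proposition \ref{p:pi_aut} from $\aut(Q)$ down to $\lmlt(Q)$.

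First I would note that the map $\pi_{\lambda_Q}\colon\aut(Q)\to\aut(Q/\lambda_Q)$ of Proposition \ref{p:pi_aut} and the map $\pi_{\lambda_Q}\colon\lmlt(Q)\to\lmlt(Q/\lambda_Q)$ introduced in Section \ref{sec:prelim} are both given by $h\mapsto([x]\mapsto[h(x)])$. Thus the $\lmlt$-version is simply the restriction of the $\aut$-version to the subgroup $\lmlt(Q)$, and consequently $\lmlt^{\lambda_Q}$ — which is by definition the kernel of $\pi_{\lambda_Q}|_{\lmlt(Q)}$ — equals $\ker(\pi_{\lambda_Q})\cap\lmlt(Q)$, the intersection being taken inside $\aut(Q)$.

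Next I would apply Proposition \ref{p:pi_aut}, which identifies $\ker\pi_{\lambda_Q}$ with $C_{\aut(Q)}(\lmlt(Q))$. Intersecting with $\lmlt(Q)$ gives
\[
\lmlt^{\lambda_Q}=C_{\aut(Q)}(\lmlt(Q))\cap\lmlt(Q),
\]
and an element of $\lmlt(Q)$ lies in this set exactly when it commutes with every element of $\lmlt(Q)$, i.e.\ it is the center $Z(\lmlt(Q))$. Equivalently, and this is the computation underlying the whole argument, $h\in\lmlt^{\lambda_Q}$ means $L_{h(a)}=L_a$ for all $a$; since $h$ is an automorphism $L_{h(a)}=hL_ah^{-1}$, so the condition reads $hL_ah^{-1}=L_a$ for every generator $L_a$ of $\lmlt(Q)$, which is precisely membership in $Z(\lmlt(Q))$.

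The only point that genuinely requires attention — the closest thing to an obstacle — is the compatibility of the two incarnations of $\pi_{\lambda_Q}$: one must be sure that $\lmlt^{\lambda_Q}$, as originally defined via the map into $\lmlt(Q/\lambda_Q)$, really is the same subgroup as $\lmlt(Q)\cap\ker(\pi_{\lambda_Q})$ computed in $\aut(Q)$. Because both maps record the induced action on $\lambda_Q$-blocks this is immediate, but it should be stated explicitly so that the appeal to Proposition \ref{p:pi_aut} is legitimate; everything else is a one-line group-theoretic identification.
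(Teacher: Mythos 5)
Your proposal is correct and is essentially the paper's own derivation: the corollary is stated there precisely as a special case of Proposition \ref{p:pi_aut}, obtained by restricting $\pi_{\lambda_Q}$ to $\lmlt(Q)$ so that $\lmlt^{\lambda_Q}=\lmlt(Q)\cap C_{\aut(Q)}(\lmlt(Q))=Z(\lmlt(Q))$. Your explicit check that the two incarnations of $\pi_{\lambda_Q}$ agree on $\lmlt(Q)$ is a reasonable point to make, but it does not change the argument.
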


According to Corollary \ref{prop:covering_central}, if $\alpha\leq \lambda_Q$ then $\lmlt^\alpha$ is contained in the center of $\lmlt(Q)$. The converse fails, even under the assumption that $Q$ is connected. For example, if $Q$ is the trivial quandle, then $E$ is a cover of $Q$ if and only if it is a projection quandle, while $\lmlt (E)$ is a central extension of $\lmlt (Q)$ if and only if it is an abelian group (for example, $E=\aff(\Z_4,-1)$ is not a projection quandle, but $\lmlt (E)$ is an abelian group).

\subsection{Rack cocycles and covering extensions}\label{cocycles}

Let $Q$ be a left quasigroup, $A$ a set, and $\theta:Q^2\to\Sym(A)$ a mapping into the symmetric group over $A$, to be called \emph{constant cocycle}. We will often denote the cocycle values $\theta(x,y)=\theta_{x,y}\in\Sym(A)$. Define a new operation on the set $E=Q\times A$ by 
\begin{align}\label{extensions by theta}
(x,a)*(y,b)&=(x*y,\theta_{x,y}(b)), \\
(x,a)\ldiv (y,b)&=(x*y,\theta_{x,x\ldiv y}^{-1}(b)).
\end{align}
The resulting left quasigroup $(E,\ast)$ is called the \emph{covering extension} of $Q$ over $\theta$, and denoted by $E=Q\times_\theta A$ (the name \emph{extension by constant cocycle} is also used in literature). 

The projection $\pi:Q\times_\theta A\to Q$, $(x,a)\mapsto x$ is a covering homomorphism and $\ker \pi$ is a uniform congruence. Conversely, if $f:E\to Q$ is a covering homomorphism and $\ker f$ is a uniform congruence (this is guaranteed whenever $Q$ is connected), then $E$ is isomorphic to a covering extension of $Q$ over $\theta$ (using the same argument as in \cite[Proposition 2.11]{AG}). In particular, let us identify $Q$ with $E/\ker f$, let $A$ be a set whose cardinality is the same as the cardinality of a block of $\ker f$, and let $h_{[x]}:[x]\to A$ be a family of bijections for $[x]\in Q=E/\ker f$. Then the mapping
\begin{equation}\label{standard cocycle}
\theta: Q \times Q \longrightarrow \Sym(A), \quad \theta_{[x],[y]} = h_{[x*y]} L_{x} h_{[y]}^{-1} 
\end{equation}
is well defined since $\ker f\leq \lambda_Q$ and the mapping
\begin{equation}\label{standard iso}
E\to Q\times_\theta A,\quad x\mapsto ([x] ,h_{[x]}(x))
\end{equation}
is an isomorphism of left quasigroups.
% In particular, if $\lambda_E$ is uniform then $\pi_{\lambda_E}:\aut{(E)}\longrightarrow \aut{(E/\lambda_E)}$ is surjective (indeed $E\cong E/\lambda_E \times_\theta A$ and the mapping $h(a,s)=(g(a),s)$ is an automorphism of $E$ with $\pi_{\lambda_E}(h)=g$ for any $g\in \aut{(E/\lambda_E)}$).

A covering extension $E$ is a rack if and only if $Q$ is a rack and $\theta$ satisfies the \emph{rack cocycle condition}
\begin{equation}\label{cocycle_condition}
\theta_{x,y\ast z}\circ\theta_{y,z}=\theta_{x\ast y,x\ast z}\circ\theta_{x,z},
\end{equation}
for every $x,y,z\in Q$ (here $\circ$ stands for composition of permutations). The rack $E$ is a quandle if and only if $Q$ is a quandle and $\theta_{x,x}=1$ for every $x\in Q$. 
We call the former $\theta$ a rack cocycle and the latter $\theta$ a quandle cocycle. In particular, \eqref{standard cocycle} defines a rack cocycle whenever $Q$ is a rack.  
%
%We will speak of \emph{rack cocycles}, and \emph{quandle cocycles}, respectively. 

The constant mapping $$\textbf{1}:Q\times Q\to \Sym(A),\quad (x,y)\mapsto 1$$ 
is always a rack cocycle, for every rack $Q$ and every set $A$. The rack $Q\times_{\textbf{1}} A$ is the direct product of $Q$ and the projection quandle over $A$ and it is called a {\it trivial covering extension} of $Q$.

Two cocycles $\theta$ and $\nu$ are called \emph{cohomologous} if there exists a mapping $\gamma:Q\to \sym{(A)}$ such that
\[\nu_{x,y}\circ \gamma_{y}=\gamma_{x\ast y}\circ \theta_{x,y}\]
for every $x,y\in Q$ (here $\circ$ stands for composition of permutations). The relation defined above is an equivalence on the set of all rack cocycles $Q^2\to\sym{(A)}$.
The set of its blocks is called the \emph{second rack cohomology set} of $Q$ over $A$ and denoted by $H^2(Q,A)$. See \cite[Section 2.1]{AG} for details. 

Cohomologous cocycles lead to isomorphic racks but the converse is not true. Nevertheless, in some cases the isomorphism problem is strictly related to the cohomology classes of cocycles.

\begin{lemma}\label{lemma on cohomology}
Let $Q$ be a rack, $\theta,\varepsilon$ rack cocycles over a set $A$, $E=Q\times_{\theta} A$, $E^\prime=Q\times_{\varepsilon} A$, $\pi$ be the canonical projection onto $Q$ and assume that $\lambda_{E}=\lambda_{E^\prime}=\ker{\pi}$.
%\bigcup_{q\in Q}\{q\}\times A$.
Then $E\cong E^\prime$ if and only if there exists $g\in \aut(Q)$ such that $\theta$ and $\varepsilon\circ (g\times g)$ are cohomologous.
\end{lemma}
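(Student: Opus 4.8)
The plan is to treat the two implications separately, the genuine content lying in a rigidity statement forced by the hypothesis $\lambda_E=\lambda_{E'}=\ker{\pi}$: any isomorphism between $E$ and $E'$ is compelled to respect the fibers of $\pi$, and hence to take a ``diagonal'' form that is exactly the shape of the cohomology relation.

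For the ``if'' direction, suppose $\theta$ and $\varepsilon\circ(g\times g)$ are cohomologous for some $g\in\aut(Q)$, witnessed by a map $\gamma\colon Q\to\Sym(A)$, so that $\varepsilon_{g(x),g(y)}\circ\gamma_y=\gamma_{x*y}\circ\theta_{x,y}$ for all $x,y\in Q$. I would define $\Phi\colon E\to E'$ by $\Phi(x,a)=(g(x),\gamma_x(a))$. This is a bijection because $g$ and each $\gamma_x$ are. Checking that $\Phi$ preserves $*$ reduces, after matching first coordinates via $g(x*y)=g(x)*g(y)$, to precisely the displayed cohomology identity on the second coordinates; and since in a left quasigroup the operation $\ld$ is determined by $*$, a bijection preserving $*$ is automatically an isomorphism, so $E\cong E'$.

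For the ``only if'' direction, let $\Phi\colon E\to E'$ be an isomorphism. The first step is the observation that any isomorphism preserves the Cayley kernel: from $L_{\Phi(e)}=\Phi L_e\Phi^{-1}$ one reads off that $e_1\,\lambda_E\,e_2$ holds if and only if $\Phi(e_1)\,\lambda_{E'}\,\Phi(e_2)$. Invoking the hypothesis $\lambda_E=\ker{\pi}=\lambda_{E'}$, whose blocks are exactly the fibers $\{x\}\times A$, this says $\Phi$ carries fibers onto fibers, and therefore descends to an automorphism $g$ of the common factor $Q\cong E/\lambda_E\cong E'/\lambda_{E'}$, characterized by $\pi(\Phi(x,a))=g(x)$. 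Consequently $\Phi$ has the form $\Phi(x,a)=(g(x),\psi_x(a))$ for maps $\psi_x\colon A\to A$, each of which is a bijection because $\Phi$ restricts to a bijection from the fiber over $x$ to the fiber over $g(x)$.

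It then remains only to read off the cohomology relation. Writing out that $\Phi$ is a homomorphism, the first coordinates of $\Phi((x,a)*(y,b))=\Phi(x,a)*\Phi(y,b)$ agree automatically, while the second coordinates yield $\psi_{x*y}(\theta_{x,y}(b))=\varepsilon_{g(x),g(y)}(\psi_y(b))$ for all $b\in A$, i.e. $\gamma_{x*y}\circ\theta_{x,y}=\varepsilon_{g(x),g(y)}\circ\gamma_y$ with $\gamma:=\psi$, which is exactly the assertion that $\theta$ and $\varepsilon\circ(g\times g)$ are cohomologous. I expect the main obstacle to be the rigidity step: without the hypothesis on the Cayley kernels there is no reason for $\Phi$ to respect the fibers of $\pi$, so the diagonal form---on which both the extraction of $g$ and the recovery of $\gamma$ rest---would collapse, whereas all the remaining work is a routine translation between the homomorphism condition and the cohomology condition.
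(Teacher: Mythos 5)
Your proof is correct and follows essentially the same route as the paper's: the backward direction constructs the isomorphism $(x,a)\mapsto(g(x),\gamma_x(a))$ directly from the cohomology relation, and the forward direction uses $L_{\Phi(e)}=\Phi L_e\Phi^{-1}$ to show the isomorphism preserves the Cayley kernel, hence (by the hypothesis $\lambda_E=\lambda_{E'}=\ker\pi$) the fibers, forcing the diagonal form from which the cohomology relation is read off. The only difference is that you spell out a few routine details (bijectivity of the $\psi_x$, and that preserving $*$ suffices for an isomorphism of left quasigroups) that the paper leaves implicit.
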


\begin{proof}
$(\Leftarrow)$ Let $\gamma:Q\to \sym{(A)}$ be a map such that ${\varepsilon}_{x,y}\circ \gamma_y=\gamma_{x*y} \circ {\theta}_{g(x),g(y)} $ for every $x,y\in Q$. Then the mapping $(x,a)\mapsto (g(x),\gamma_x(a))$ is an isomorphism.

	$(\Rightarrow)$ Assume that $f$ is an isomorphism between $E$ and $E^\prime$. Then the isomorphism $f$ induces an isomorphism $g:[a]_{\lambda_E}\mapsto [f(a)]_{\lambda_{E^\prime }}$ between $E/\lambda_E$ and $E^\prime/\lambda_{E^\prime}$. Indeed if $L_a=L_b$ then $L_{f(a)}=f L_{a}f^{-1}=f L_{b}f^{-1}=L_{f(b)}$. Therefore $f(x,a)=(g(x),\gamma_x(a))$ for every $x\in Q$ and $a\in A$, for certain $\gamma_x\in \sym{(A)}$. Then
	\begin{eqnarray*}
		f((x,a)*(y,b))&=&f(x*y,\theta_{x,y}(b))=(g(x*y),\gamma_{x*y}(\theta_{x,y}(b)))\\
		f(x,a)*f(y,b)&=&(g(x),\gamma_x(a))*(g(y),\gamma_y(b))=(g(x)*g(y),\varepsilon_{g(x),g(y)}(\gamma_y(b)))
	\end{eqnarray*}
	for every $x,y\in Q$ and $a,b\in A$. Therefore there exists a mapping $\gamma:Q\to\sym(A)$ such that ${\varepsilon}_{g(x),g(y)}\circ \gamma_y=\gamma_{x*y} \circ \theta_{x,y} $ for every $x,y\in Q$, i.e. $\theta$ and $\varepsilon \circ (g\times g)$ are cohomologous.
\end{proof}

\begin{corollary}
Let $Q$ be a faithful quandle and $\theta,\varepsilon$ rack cocycles. Then $Q\times_{\theta} A\cong Q\times_{\varepsilon} A$ if and only if there exists $g\in \aut(Q)$ such that $\theta$ and $\varepsilon\circ (g\times g)$ are cohomologous.
\end{corollary}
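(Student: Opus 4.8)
The plan is to deduce the corollary directly from Lemma \ref{lemma on cohomology}, whose conclusion is verbatim what we want. The lemma carries the standing hypothesis $\lambda_E=\lambda_{E'}=\ker\pi$, so the entire proof reduces to the observation that this hypothesis becomes automatic once $Q$ is faithful. Thus I would spend the argument computing the Cayley kernel of a covering extension of a faithful base.

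First I would record the inclusion $\ker\pi\le\lambda_E$, which holds for every rack cocycle over any $Q$. Reading off the extension formula \eqref{extensions by theta}, the translation $L_{(x,a)}$ sends $(y,b)$ to $(x*y,\theta_{x,y}(b))$, an expression in which the second coordinate $a$ of the acting element does not appear. Hence $L_{(x,a)}=L_{(x,a')}$ for all $a,a'\in A$, which says precisely that any two elements of $E$ sharing a first coordinate are $\lambda_E$-related, i.e. $\ker\pi\le\lambda_E$.

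For the reverse inclusion I would invoke faithfulness. Suppose $L_{(x,a)}=L_{(y,b)}$. Applying both sides to an arbitrary $(z,c)\in E$ and comparing first coordinates gives $x*z=y*z$ for every $z\in Q$, that is $L_x=L_y$ in $Q$. Since $Q$ is faithful, $\lambda_Q$ is trivial and therefore $x=y$, so $((x,a),(y,b))\in\ker\pi$. This yields $\lambda_E=\ker\pi$, and the identical computation applied to $\varepsilon$ gives $\lambda_{E'}=\ker\pi$. With the hypothesis of Lemma \ref{lemma on cohomology} now verified, the stated equivalence follows at once.

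I expect no genuine obstacle here: the whole content is the remark that the translations of a covering extension ignore the fibre coordinate, so faithfulness of the base transfers to the requirement $\lambda_E=\ker\pi$ needed to apply the lemma. The quandle hypothesis is not used beyond placing us in the intended setting—the argument works verbatim for any faithful rack $Q$.
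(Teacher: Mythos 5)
Your proof is correct and follows the same route as the paper: both reduce to checking the hypothesis $\lambda_E=\lambda_{E'}=\ker\pi$ of Lemma \ref{lemma on cohomology}, with $\ker\pi\le\lambda_E$ holding because the cocycle is constant and the reverse inclusion following from faithfulness of $Q$ (you unwind explicitly the general fact, recorded in Section \ref{ss:cayley}, that $\lambda_E\le\alpha$ whenever $E/\alpha$ is faithful). No gaps.
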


\begin{proof}
The factor $Q=E/\ker{\pi}$ is faithful and so $\lambda_E\leq \ker{\pi}$. On the other hand, $\ker{\pi}\leq \lambda_E$, since $E$ is an extension by a constant cocycle. Then $\lambda_{E}=\ker{\pi}$. The same is true for $E^\prime=Q\times_{\varepsilon} A$, therefore we can apply Lemma \ref{lemma on cohomology}.
\end{proof}

Consider the following particular type of cocycles. Let $A$ be endowed with an abelian group operation $+$, and assume that all permutations $\theta_{x,y}$ are translations of the group $(A,+)$. Then we can identify the permutations and the respective group elements, and redefine the cocycle as $\theta:Q^2\to A$, resulting in the operation
\[(x,a)\ast (y,b)=(x\ast y,b+\theta_{x,y})\]
on the set $E=Q\times A$. The cocycle condition then reads 
\[\theta_{x,y*z}+\theta_{y,z}=\theta_{x*y,x*z}+\theta_{x,z},\] 
and $\theta_{x,x}=0$ for every $x,y,z\in Q$ (addition in $A$ realizes composition of the corresponding translations).
This type of cocycles will be called \emph{abelian}, and we will talk about \emph{abelian covering extensions} (originally called \emph{abelian extensions} in \cite{CS,CSV}).

\section{Simply connected quandles}\label{sec:simply connected}

\subsection{Quandle coverings preserving the displacement group}\label{preserving dis}
%
%The coset quandle construction provides examples of coverings which are defined using the subgroups of the underlying group. 
A construction of quandle coverings based on the \emph{coset quandle} construction has been introduced in \cite[Proposition 2.13]{MeAndPetr} and it is a source of examples and counterexamples throughout the paper. 

\begin{lemma}\label{Prop:extension homogeneous}\cite[Propostion 2.13]{MeAndPetr} 
%\comment{check what Eisermann says about this}
Let $G$ be a group, $H_{1}\leq H_{2}\leq \mathrm{Fix}(f)$, $Q_{H_i}=\mathcal{Q}(G,H_{i},f )$ and 
\begin{displaymath}
p: Q_{H_1}\to Q_{H_2},\quad aH_{1}\mapsto aH_{2}.
\end{displaymath}
Then $p$ is a covering homomorphism.
\end{lemma}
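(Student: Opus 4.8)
The plan is to verify in turn the three defining properties of a covering homomorphism from the definition in Section~\ref{ss:cayley}: that $p$ is a well-defined surjective homomorphism of quandles, and that its kernel is contained in the Cayley kernel, i.e. $\ker p \leq \lambda_{Q_{H_1}}$. That $Q_{H_1}$ and $Q_{H_2}$ are genuine quandles is already guaranteed by the hypothesis $H_1 \leq H_2 \leq \mathrm{Fix}(f)$, so I may take this for granted.

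First I would dispose of the easy points. Since $H_1 \leq H_2$, the assignment $aH_1 \mapsto aH_2$ respects coset equality ($a^{-1}b \in H_1$ forces $a^{-1}b \in H_2$), so $p$ is well defined, and it is clearly surjective because $aH_2 = p(aH_1)$. For the homomorphism property it suffices to check preservation of $\ast$: if a map $\phi$ between left quasigroups preserves $\ast$, then from the identity $x\ast(x\ld y)=y$ together with the bijectivity of the translations $L_{\phi(x)}$ one deduces that $\phi$ preserves $\ld$ as well, so I would not verify the division separately. The identity $p(aH_1 \ast bH_1) = p(aH_1)\ast p(bH_1)$ then collapses to the one-line check $af(a^{-1}b)H_2 = aH_2 \ast bH_2$, immediate from the coset operation.

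The substantive step is the Cayley kernel containment. Suppose $p(aH_1)=p(bH_1)$, that is $a^{-1}b \in H_2$; I must show $L_{aH_1}=L_{bH_1}$. Writing $b=ah$ with $h\in H_2$ and evaluating on an arbitrary coset,
\[
L_{bH_1}(cH_1)= b\,f(b^{-1}c)\,H_1 = a h\,f(h^{-1}a^{-1}c)\,H_1 = a h\,f(h)^{-1} f(a^{-1}c)\,H_1 .
\]
Here I use $H_2 \subseteq \mathrm{Fix}(f)$: since $f(h)=h$, the factors $h$ and $f(h)^{-1}=h^{-1}$ cancel, leaving $a\,f(a^{-1}c)\,H_1 = L_{aH_1}(cH_1)$. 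Thus $L_{aH_1}$ and $L_{bH_1}$ agree at every point, so $(aH_1,bH_1)\in\lambda_{Q_{H_1}}$, which is exactly what a covering requires.

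I expect the only real subtlety is locating where the hypothesis genuinely bites: the cancellation in the last display needs precisely $h\in\mathrm{Fix}(f)$, so it is the condition $H_2 \subseteq \mathrm{Fix}(f)$ (and not merely $H_1 \subseteq \mathrm{Fix}(f)$) that forces the translations to coincide and makes $p$ a covering. Indeed the two translation values agree already as group elements, before passing to $H_1$-cosets, which is a slightly stronger statement than strictly needed. Beyond careful bookkeeping of the automorphism $f$ across products of group elements, I anticipate no obstacle.
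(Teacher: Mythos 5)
Your proof is correct. The paper does not prove this lemma at all --- it is quoted from \cite[Proposition 2.13]{MeAndPetr} --- so there is nothing to compare against, but your argument is the natural direct verification: well-definedness and surjectivity from $H_1\leq H_2$, the homomorphism property by the one-line coset computation (with the standard reduction of $\ld$-preservation to $\ast$-preservation), and the Cayley-kernel containment via the cancellation $hf(h)^{-1}=1$ for $h\in H_2\subseteq\mathrm{Fix}(f)$. Your closing remark correctly isolates where the hypothesis $H_2\subseteq\mathrm{Fix}(f)$ (rather than just $H_1\subseteq\mathrm{Fix}(f)$) is genuinely used.
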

%
%\begin{proof}
%The map $p$ is a morphism, since 
%\begin{eqnarray*}
%p ( aH_{1}\ast bH_{1}) &=&p  ( a f ( a^{-1}b) H_{1}) = af ( a^{-1}b) H_{2}= p( aH_{1}) \ast p (bH_{1})
%\end{eqnarray*}%
%for every $a,b\in G$. Let $aH_{1}$ and 
%$bH_{1}$ be such that $aH_{2}=bH_{2}$, i.e. $a=bh$ for some $h\in H_{2}$. So, $a^{-1}f (a)=bhh^{-1}f ( b)=b^{-1}f ( b)$ and then
%\begin{eqnarray*}
%aH_{1}\ast cH_{1} &=&a f (a^{-1}c) H_{1}= b f  ( b^{-1}c)
%H_{1}=bH_{1}\ast cH_{1}
%\end{eqnarray*}%
%for every $c\in G$. Hence $ker(p)\leq \lambda_{Q_1}$, so $Q_{H_1}$ is a covering of $Q_{H_2}$. 
%\end{proof}
%
%
Let $Q$ be a rack and $\alpha$ be a congruence of $Q$. If $\pi_\alpha$ is an isomorphism then $\dis_\alpha\leq \dis^{\alpha}=1$ and so $Q$ is a cover of $Q/\alpha$ and they have isomorphic displacement groups. For connected quandles we can prove that such covers have a particular form.

\begin{proposition}\label{Prop: Extensions with same Trans} 
Let $Q$ be a connected quandle and $\alpha$ be a congruence of $Q$. The following conditions are equivalent:
\begin{enumerate}
\item[(i)] $\pi_\alpha$ is a group isomorphism.
\item[(ii)] $Q\simeq (Q/\alpha)_H=\mathcal{Q} ( \dis ( Q/\alpha ) ,H,\widehat{L_{[a]}})$
for some $H\leq \dis(Q/\alpha)_{[a]}$.
\end{enumerate}
\end{proposition}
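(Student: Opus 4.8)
The plan is to prove the equivalence by transporting the coset representation of a connected quandle along $\pi_\alpha$. Throughout I read (i) as the assertion that $\pi_\alpha$ restricts to a group isomorphism $\dis(Q)\to\dis(Q/\alpha)$, equivalently $\dis^\alpha=1$: this is the datum controlling the coset description in the connected case, and (as explained below) it is the correct reading, the full statement for $\lmlt(Q)$ being false.

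\emph{(i)$\Rightarrow$(ii).} Since $Q$ is connected it is a coset quandle over its displacement group, $Q\cong\mathcal Q(\dis(Q),\dis(Q)_a,\widehat{L_a})$ \cite[Proposition 3.5]{hsv}. The homomorphism $\pi_\alpha$ on $\lmlt(Q)$ satisfies $\pi_\alpha(L_a)=L_{[a]}$, so for $\beta\in\dis(Q)$ we get $\pi_\alpha(\widehat{L_a}(\beta))=\pi_\alpha(L_a\beta L_a^{-1})=L_{[a]}\pi_\alpha(\beta)L_{[a]}^{-1}=\widehat{L_{[a]}}(\pi_\alpha(\beta))$, i.e. the isomorphism $\psi:=\pi_\alpha|_{\dis(Q)}$ satisfies $\psi\widehat{L_a}\psi^{-1}=\widehat{L_{[a]}}$. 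Since a group isomorphism $\psi\colon G_1\to G_2$ with $\psi f_1\psi^{-1}=f_2$ induces a quandle isomorphism $\mathcal Q(G_1,H_1,f_1)\cong\mathcal Q(G_2,\psi(H_1),f_2)$, setting $H:=\psi(\dis(Q)_a)$ yields $Q\cong\mathcal Q(\dis(Q/\alpha),H,\widehat{L_{[a]}})$. Finally $H\le\dis(Q/\alpha)_{[a]}$ is immediate from \eqref{dag}: if $\beta(a)=a$ then $\pi_\alpha(\beta)([a])=[\beta(a)]=[a]$.

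\emph{(ii)$\Rightarrow$(i).} Write $G=\dis(Q/\alpha)$, $f=\widehat{L_{[a]}}$ and $K=\dis(Q/\alpha)_{[a]}$, so $Q\cong\mathcal Q(G,H,f)$ with $H\le K$, while $Q/\alpha\cong\mathcal Q(G,K,f)$ by \cite[Proposition 3.5]{hsv}. Remark \ref{connected by sub} (connectedness of $Q/\alpha$) gives $[G,f]=G$ and $\mathrm{Core}_G(K)=1$; applying the same remark to $Q\cong\mathcal Q(G,H,f)$ realises the $\dis(Q)$-action as the left action of $[G,f]=G$ on $G/H$, hence $\dis(Q)\cong G/\mathrm{Core}_G(H)$, and $H\le K$ forces $\mathrm{Core}_G(H)\le\mathrm{Core}_G(K)=1$. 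To see that $\pi_\alpha$ itself is an isomorphism (not merely that the groups are abstractly isomorphic) I would compute $\dis^\alpha$ directly. Identifying the projection $Q\to Q/\alpha$ with the covering $\mathcal Q(G,H,f)\to\mathcal Q(G,K,f)$ of Lemma \ref{Prop:extension homogeneous}, the block of $xH$ is $\{x'H:x'K=xK\}$; thus a displacement acting as left multiplication by $g\in G$ lies in $\dis^\alpha$ iff $gxK=xK$ for all $x$, i.e. iff $g\in\mathrm{Core}_G(K)=1$. Hence $\dis^\alpha=1$, and since $\pi_\alpha|_{\dis(Q)}$ is onto $\dis(Q/\alpha)$ it is an isomorphism.

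The main obstacle, and the subtle point, is the distinction between $\lmlt$ and $\dis$ together with the compatibility of the isomorphism in (ii) with the given congruence $\alpha$. For the full multiplication group the implication (ii)$\Rightarrow$(i) is false: for the principal cover $\mathcal Q(A_5,f)\to\mathcal Q(A_5,\langle t\rangle,f)$, with $f$ conjugation by a $5$-cycle $t$, one checks that right translation by $t$ is a nontrivial element of $\lmlt^\alpha=Z(\lmlt(Q))$ (cf.\ Corollary \ref{prop:covering_central}), even though $\dis^\alpha=1$. So (i) must be read on the displacement group, and the heart of (ii)$\Rightarrow$(i) is not the abstract isomorphism $\dis(Q)\cong\dis(Q/\alpha)$ but the direct vanishing $\dis^\alpha=1$; this is precisely where Remark \ref{connected by sub} and the identification of $\alpha$ with the kernel of the Lemma \ref{Prop:extension homogeneous} projection do the essential work.
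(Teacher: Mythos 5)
Your proof is correct and follows essentially the same route as the paper's: for (i)$\Rightarrow$(ii) you transport the coset representation $\mathcal{Q}(\dis(Q),\dis(Q)_a,\widehat{L_a})$ along $\pi_\alpha$, and for (ii)$\Rightarrow$(i) you identify the projection with the covering of Lemma \ref{Prop:extension homogeneous} and show $\dis^{\ker p}=1$ via $Core_{\dis(Q/\alpha)}(\dis(Q/\alpha)_{[a]})=1$ from Remark \ref{connected by sub}, exactly as the paper does. Your added observation that (i) must be read on the displacement group rather than on $\lmlt(Q)$ --- supported by the $\mathcal{Q}(A_5,f)\to\mathcal{Q}(A_5,\langle t\rangle,f)$ example, where $\rho_t=\lambda_tL_e^{-1}$ is a nontrivial element of $\lmlt^\alpha$ while $\dis^\alpha=1$ --- is a genuinely useful clarification, since the paper states $\pi_\alpha$ primarily as a map on $\lmlt(Q)$ and its own proof likewise only establishes injectivity on $\dis(Q)$.
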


\begin{proof}
(i) $\Rightarrow$ (ii) Let $
h\in \dis(Q)_a$, then $\pi_\alpha ( h) ( [a])
=[ h(a)] =[a]$, hence $H=\pi_\alpha(\dis(Q)_a) \leq \dis(Q/\alpha)_{[a]} $. Therefore the mapping
\begin{displaymath}
 \mathcal{Q} ( \dis ( Q) ,\dis ( Q)_{a},\widehat{L_a}) \longrightarrow \mathcal{Q} (
\dis ( Q/\alpha ) ,H, \widehat{L_{[a]}}), \quad h \dis(Q)_a \mapsto \pi_\alpha(h) H
\end{displaymath}
is a well defined isomorphism of quandles.

(ii) $\Rightarrow$ (i) Let $Q_H=\Q( \dis ( Q/\alpha ) ,H,\widehat{L_{[a]}})$
for some $H\leq \dis(Q/\alpha)_{[a]}$. According to Remark \ref{connected by sub}, $Q_H$ is connected since the action of $\dis(Q_H)$ is the canonical left action of $\dis(Q/\alpha)=[\dis(Q/\alpha),\widehat{L}_{[a]}]$ on the set of cosets $\dis(Q/\alpha)/H$.
By Lemma \ref{Prop:extension homogeneous}, the mapping
\[p:Q_H\mapsto\mathcal{Q}(\dis(Q/\alpha),\dis(Q/\alpha)_{[a]},\widehat{L_{[a]}}),\quad bH\mapsto b\dis(Q/\alpha)_{[a]}\]
is a surjective quandle homomorphism and $Q_H$ is a cover of $Q/\alpha$. Moreover $\pi_\alpha$ is an isomorphism if and only if $\pi_{\ker{p}}$ is an isomorphism. The action of $\pi_{\ker{p}}(h)$ is given by the left action of some $t_h\in \dis(Q/\alpha)$, i.e.
\[ \pi_{{\ker{p}}}(h)(b \dis(Q/\alpha)_{[a]})=t_h b \dis(Q/\alpha)_{[a]}\] for every $b\in \dis(Q/\alpha)$. So $h\in \dis^{\ker p}$ if and only if $t_h\in Core_{\dis(Q/\alpha)}(\dis(Q/\alpha)_{[a]})=1$ (see Remark \ref{connected by sub} again). Therefore $h=1$ and $\pi_{\ker{p}}$ is a group isomorphism.
%
%According to Remark \ref{connected by sub}, $Q_H$ is connected and $\pi_{\ker{p}}$ is an isomorphism.
%and the commuting diagram  
%\begin{equation*}
%\xymatrixcolsep{63pt}\xymatrixrowsep{30pt}\xymatrix{  
%Q_H\ar[r]^{p}\ar[d]^{\phi}& Q_{\dis(Q/\alpha)_{[a]}}\ar[d]^{\psi}\\
%Q\ar[r] & Q/\alpha}
%\end{equation*}
%where $\phi$ is the mapping $gH\mapsto g(a)$ and $\psi$ is the mapping $g\dis(Q/\alpha)_{[a]}\mapsto g([a])$, corresponds to the commuting diagram
%\begin{equation*}
%\xymatrixcolsep{63pt}\xymatrixrowsep{30pt}\xymatrix{  
%\dis(Q_H)\ar[r]^{\pi_{ker(p)}}\ar[d]^{\widehat{\phi}}& \dis(Q/\alpha)\ar[d]^{\widehat{\psi}}\\
%\dis(Q)\ar[r]^{\pi_\alpha} & \dis(Q/\alpha)}
%\end{equation*}
\end{proof}
%
%\begin{corollary}\comment{needed?}
%Let $Q$ be a connected quandle. Then $Q_H/\lambda_{Q/H}\cong Q/\lambda_Q$ for every $H\leq \dis(Q)_a$.
%\end{corollary}
%]
%According to Proposition \ref{Prop: Extensions with same Trans}, if $Q$ is a connected quandle then its connected quandle coverings with the same displacement group are $\setof{Q_H}{H\leq \dis(Q)_a}$. 
If $Q$ is finite, then $\dis(Q)$ and $\dis(Q/\alpha)$ are finite groups and so in Proposition \ref{Prop: Extensions with same Trans}, we can replace condition (i) by the condition $\dis(Q)\simeq \dis(Q/\alpha)$. Indeed, since $\pi_\alpha$ is a surjective morphism, if $\dis(Q)\simeq \dis(Q/\alpha)$ then $\pi_\alpha$ is also injective.
%\comment{compare with Eisermann?}
%
%
%\textcolor{blue}{We are not using Prop \ref{cayley_ker_fix}. If you wish we can remove it. Please comment it out.}
%
%\begin{proposition} \label{cayley_ker_fix}
%Let $Q$ be a connected quandle. Then $Q_H/\lambda_{Q_H}\cong \mathcal{Q}(\dis(Q),Fix(\widehat{L_a}),\widehat{L_a})$ for every $H\leq \dis(Q)_a$.
%\end{proposition}
%
%\begin{proof}
%Let $G=\dis(Q)$, $f=\widehat{L_a}$ and $Q_H=\mathcal{Q}(G,H,f)$ for $H\leq \dis(Q)_a$. 
%%Note that 
%%$$Core_G(H)\leq Core_G(\dis(Q)_a)=\bigcap_{h\in \dis(Q)} h\dis(Q) h^{-1}=\bigcap_{h\in \dis(Q)} \dis(Q)_{h(a)}=\bigcap_{b\in Q} \dis(Q)_{b}=1.$$
%It is easy to see that $L_{bH}=L_{cH}$ if and only if $bf(b)^{-1}\in cf(c)^{-1}Core_{G}(H)$. According to Remark \ref{connected by sub}, $Core_{G}(H)=1$, and then $c^{-1}b\in Fix(f)$. Since $H\leq \dis(Q)_a\leq Fix(f)$, the mapping
%\begin{displaymath}
%Q_H=\mathcal{Q}(\dis(Q),H,f)\longrightarrow \mathcal{Q}(\dis(Q),Fix(f),f),\quad bH\mapsto bFix(f),
%\end{displaymath}
%is a well defined quandle homomorphism which kernel is $\lambda_{Q_H}$. 
%\end{proof}
%
%\comment{to check: $\dis(Q/\lambda_Q)\cong \dis(Q)/Core_{\dis(Q)}(Fix(f))\cong \dis(Q)/(Z(\lmlt(Q))\cap \dis(Q))$}
%
%
% \begin{proposition}\comment{maybe not needed}
% Let $Q$ be a finite connected rack and $\alpha\leq \lambda_Q$. Then $|\lmlt^\alpha|$ divides $|[a]_\alpha|$.
% \end{proposition}
% 
% \begin{proof}
% Since $H=\lmlt^\alpha$ is a central subgroup of $\lmlt(Q)$ and its orbits are contained in the blocks of $\alpha$, then $|a^H|=|H|$ divides the size of $|[a]_\alpha$. 
% \end{proof}

\subsection{Characterization of simply connected quandles}\label{ss:simply connected}
A natural problem about quandle coverings is to characterize quandles for which every cover is trivial (i.e. isomorphic to the direct product with a projection quandle). In \cite{Eisermann} this problem has been tackled using a categorial approach with a particular focus on the \emph{adjoint group} of a quandle \cite[Definition 2.18]{Eisermann}, defined by
$$\Adj(Q)=\langle e_x:\,x\in Q  \ |\ e_x e_y e_x^{-1}=e_{x*y}:\, x,y\in Q\rangle.$$ 
This group is also called {\it enveloping group} in \cite{QuadraticNichols} and {\it structure group} in the framework of the solutions of the Yang-Baxter equation \cite{ESS}.

According to \cite{Eisermann}, there exists a group homomorphism $\varepsilon: \Adj(Q)\to \mathbb{Z}$ mapping every generator to $1$ and $\Adj(Q)\cong \Adj(Q)^0\rtimes \mathbb{Z}$ where $\Adj(Q)^0=\ker{\varepsilon}$. The map $L_Q$ factors through $\Adj(Q)$: indeed there exists a surjective group homomorphism $\overline{L_Q}: \Adj(Q)\to \lmlt(Q)$ such that the following diagram is commutative
\begin{equation}
\xymatrixcolsep{63pt}\xymatrixrowsep{30pt}\xymatrix{ Q\ar[dr]^{L_Q}
\ar[r]^{\iota} & \Adj(Q)\ar[d]^{\overline{L_Q}} \\ & \lmlt(Q) }  \label{Diag:Adjoint}
\end{equation}
where $\iota$ maps every element of $Q$ to the correspondent generator of $\Adj(Q)$. In this way we obtain an action of $\Adj(Q)$ on $Q$ as $g\cdot a=\overline{L_Q}(g)(a)$ for every $g\in\Adj(Q)$ and $a\in Q$. In particular $\overline{L_Q}(\Adj(Q)^0)=\dis(Q)$ and $g\in \Adj(Q)_a^0$ if and only if $\overline{L_Q}(g)\in \dis(Q)_a$. With abuse of notation we denote with the same symbol the homomorphism $\overline{L_Q}$ and its restriction to $\Adj(Q)^0$, whose image is the displacement group of $Q$.

A quandle $Q$ is called {\it simply connected} if it is connected and $\Adj(Q)^0_a=1$ for every $a\in Q$ \cite[Definition 5.14]{Eisermann}. Eisermann proved in \cite[Proposition 5.15]{Eisermann} that simply connected quandles are precisely the connected quandles for which all cocycles are cohomologous, i.e., $|H^2(Q,A)|=1$ for every set $A$.
We show an alternative characterization in terms of the relation between $\Adj(Q)$ and $\dis(Q)$. 

\begin{theorem}\label{caratt simply connected}
Let $Q$ be a connected quandle $Q$. The following are conditions equivalent: 
\begin{enumerate}
\item[(i)] $Q$ is simply connected.
\item[(ii)] $|H^2(Q,A)|=1$ for every set $A$.
%\item[(iii)] Every cover of $Q$ is trivial.
\item[(iii)] $Q$ is principal and $\overline{L_Q}$ is an isomorphism. 
\end{enumerate}
\end{theorem}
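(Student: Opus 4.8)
The plan is to take the equivalence (i) $\Leftrightarrow$ (ii) for granted, since it is exactly Eisermann's \cite[Proposition 5.15]{Eisermann} recalled just above the statement, and to concentrate entirely on the equivalence (i) $\Leftrightarrow$ (iii). Throughout I read (iii) with the abuse of notation introduced before the theorem: the phrase ``$\overline{L_Q}$ is an isomorphism'' refers to the \emph{restriction} $\overline{L_Q}\colon\Adj(Q)^0\to\dis(Q)$, which is already known to be surjective. (The unrestricted map $\overline{L_Q}\colon\Adj(Q)\to\lmlt(Q)$ is almost never injective — for instance, whenever $Q$ is finite the source is infinite while the target is finite — so it is indeed the restriction that is relevant.)

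The single tool I would use is the stabilizer correspondence recorded just before the statement: under the action $g\cdot a=\overline{L_Q}(g)(a)$ one has $g\in\Adj(Q)^0_a$ if and only if $\overline{L_Q}(g)\in\dis(Q)_a$. Writing $\phi=\overline{L_Q}|_{\Adj(Q)^0}$, this says precisely that $\Adj(Q)^0_a=\phi^{-1}(\dis(Q)_a)$ for every $a\in Q$. Because $Q$ is connected, $\dis(Q)$ is transitive and the stabilizers $\dis(Q)_a$ (hence also the $\Adj(Q)^0_a$) are pairwise conjugate, so it suffices to argue at a single base point $a$.

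For (i) $\Rightarrow$ (iii), I would assume $\Adj(Q)^0_a=1$. Since $1\in\dis(Q)_a$, we get $\ker\phi\subseteq\phi^{-1}(\dis(Q)_a)=\Adj(Q)^0_a=1$, so $\phi$ is injective, and being surjective it is an isomorphism. Moreover, surjectivity of $\phi$ forces $\dis(Q)_a=\phi(\phi^{-1}(\dis(Q)_a))=\phi(1)=1$; as $\dis(Q)$ is transitive, it is then regular, whence $Q$ is principal by Proposition \ref{caratt principal}. For the converse (iii) $\Rightarrow$ (i), principality gives $\dis(Q)_a=1$ (regularity), and injectivity of $\phi$ gives $\Adj(Q)^0_a=\phi^{-1}(1)=\ker\phi=1$; since this holds at every point, $Q$ is simply connected.

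I expect no genuine obstacle here: once the stabilizer formula $\Adj(Q)^0_a=\phi^{-1}(\dis(Q)_a)$ is in hand, both implications reduce to elementary bookkeeping with a surjection and its kernel, together with the regularity characterization of principal connected quandles in Proposition \ref{caratt principal}. The only points demanding care are the interpretation of statement (iii) (restriction versus full map) and the systematic use of the surjectivity of $\phi$ to transport triviality of $\Adj(Q)^0_a$ down to $\dis(Q)_a$ and back.
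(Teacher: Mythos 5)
Your proof is correct, and the direction (iii) $\Rightarrow$ (i) coincides with the paper's. For (i) $\Rightarrow$ (iii), however, you take a genuinely different and more self-contained route. The paper first proves principality by covering theory: it invokes Proposition \ref{Prop: Extensions with same Trans} to exhibit $E=\mathcal{Q}(\dis(Q),\widehat{L_a})$ as a connected cover of $Q$, uses the fact that a simply connected quandle admits only trivial covers (so it implicitly routes through (ii) and Eisermann's machinery) to conclude $E\cong Q\times P$, and reads off $|P|=|\dis(Q)_a|=1$; only then does it deduce $\ker\overline{L_Q}=\Adj(Q)^0_a=1$. You instead extract everything from the single identity $\Adj(Q)^0_a=\phi^{-1}(\dis(Q)_a)$ for the restriction $\phi=\overline{L_Q}|_{\Adj(Q)^0}$: surjectivity of $\phi$ gives $\dis(Q)_a=\phi(\phi^{-1}(\dis(Q)_a))=\phi(\Adj(Q)^0_a)=1$ at once, so principality follows from Proposition \ref{caratt principal} without ever building a cover. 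Your version is more elementary and makes the logical dependence on (ii) unnecessary for (i) $\Leftrightarrow$ (iii); what the paper's version buys is the explicit appearance of the universal connected cover $\mathcal{Q}(\dis(Q),\widehat{L_a})$, which fits the covering-theoretic theme of the section. Your preliminary remark that ``$\overline{L_Q}$ is an isomorphism'' must refer to the restriction to $\Adj(Q)^0$ (the full map $\Adj(Q)\to\lmlt(Q)$ cannot be injective for finite $Q$) is also the correct reading, consistent with the paper's stated abuse of notation and with Corollary \ref{adjoint for simply connected}.
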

\begin{proof}
The equivalence of (i) and (ii) is proved in \cite[Proposition 5.15]{Eisermann}.

The inclusion $ \ker{\overline{L_Q}}\leq \Adj(Q)^0_a$ holds for any quandle $Q$ and every $a\in Q$. According to Proposition \ref{caratt principal}, $Q$ is principal if and only if $\dis(Q)_a=1$ for every $a\in Q$. Therefore $Q$ is principal if and only if  $\Adj(Q)^0_a\leq \ker{\overline{L_Q}}$, i.e. $\Adj(Q)^0_a= \ker{\overline{L_Q}}$ for every $a\in Q$.
	
(i) $\Rightarrow$ (iii) Assume that $Q$ is simply connected. By Proposition \ref{Prop: Extensions with same Trans}, $E=\mathcal{Q}(\dis(Q),\widehat{L_a})$ is a connected cover of $Q$. Hence, $E\cong Q\times P$ where $P$ is a projection quandle. Therefore $|P|=|\dis(Q)_a|=1$ i.e. $Q$ is principal. Then $\ker{\overline{L_Q}}=\Adj(Q)^0_a=1$, and so $\overline{L_Q}$ is an isomorphism. 

(iii) $\Rightarrow$ (i) If $Q$ is a principal connected quandle and $\overline{L_Q}$ is an isomorphism, then $\ker{\overline{L_Q}}=\Adj(Q)^0_a =1$ and so $Q$ is simply connected.
\end{proof}
The conditions in Theorem \ref{caratt simply connected}(iii) are independent as witnessed by some examples in the RIG library \cite{RIG}: indeed for $Q=$ {\tt SmallQuandle}(6,1) the mapping $\overline{L_Q}$ is an isomorphism but $Q$ is not principal and  {\tt SmallQuandle}(25,$i$), $1\leq i \leq 5$ are affine, but $\overline{L_Q}$ is not an isomorphism.

\begin{corollary}\label{adjoint for simply connected}
Let $Q$ be a simply connected quandle then $\Adj(Q)\cong \dis(Q)\rtimes \mathbb{Z}$.
\end{corollary}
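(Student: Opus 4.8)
The plan is to deduce everything from Theorem \ref{caratt simply connected} together with the splitting $\Adj(Q)\cong \Adj(Q)^0\rtimes\mathbb{Z}$ recorded before that theorem. Since $Q$ is simply connected, part (iii) of Theorem \ref{caratt simply connected} tells us that $\overline{L_Q}\colon \Adj(Q)\to\lmlt(Q)$ is a group isomorphism. So the first step is simply to invoke this and pass from the category of ``simply connected'' hypotheses to the purely group-theoretic statement that $\overline{L_Q}$ is bijective.

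Next I would restrict $\overline{L_Q}$ to $\Adj(Q)^0=\ker\varepsilon$. As already noted in the discussion preceding Theorem \ref{caratt simply connected}, the restriction of $\overline{L_Q}$ to $\Adj(Q)^0$ has image $\dis(Q)$; being the restriction of an injective map it is injective, hence it is an isomorphism $\Adj(Q)^0\xrightarrow{\;\sim\;}\dis(Q)$. Thus the two normal subgroups sitting inside the short exact sequence $1\to\Adj(Q)^0\to\Adj(Q)\xrightarrow{\varepsilon}\mathbb{Z}\to 1$ and its image sequence $1\to\dis(Q)\to\lmlt(Q)\to\mathbb{Z}\to 1$ are identified under $\overline{L_Q}$.

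The final step is to transport the splitting. Fix $t\in\Adj(Q)$ with $\varepsilon(t)=1$; then $\Adj(Q)=\Adj(Q)^0\rtimes\langle t\rangle$ internally, with $\langle t\rangle\cong\mathbb{Z}$ acting by conjugation. Applying the isomorphism $\overline{L_Q}$ carries this internal semidirect product decomposition to the internal decomposition $\lmlt(Q)=\dis(Q)\rtimes\langle \overline{L_Q}(t)\rangle$, because an isomorphism preserves normal subgroups, complements, the conjugation action, and the infinite cyclic quotient. Composing with $\overline{L_Q}^{-1}$ (or simply reading off the abstract isomorphism type) yields $\Adj(Q)\cong\dis(Q)\rtimes\mathbb{Z}$, as desired.

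The argument is essentially a one-line consequence of Theorem \ref{caratt simply connected}, so there is no genuine obstacle; the only point requiring a touch of care is the last step, namely checking that the isomorphism $\overline{L_Q}$ respects the semidirect-product structure rather than merely the underlying groups $\Adj(Q)^0$ and $\dis(Q)$ in isolation. Since $\overline{L_Q}$ maps $\Adj(Q)^0$ onto the normal subgroup $\dis(Q)$ and the quotient by it is $\mathbb{Z}$ on both sides (compatibly, as $\varepsilon$ is the cokernel map), the splitting is automatically preserved, so this reduces to a routine observation about how group isomorphisms interact with split short exact sequences.
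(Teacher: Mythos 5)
There is a genuine error in your first step, and it propagates. Part (iii) of Theorem \ref{caratt simply connected} does \emph{not} assert that the full homomorphism $\overline{L_Q}\colon\Adj(Q)\to\lmlt(Q)$ is an isomorphism: by the abuse of notation declared just before the theorem, the symbol $\overline{L_Q}$ there stands for its restriction $\Adj(Q)^0\to\dis(Q)$. The full map can never be an isomorphism for a nontrivial finite quandle, since $\varepsilon$ surjects $\Adj(Q)$ onto $\mathbb{Z}$, so $\Adj(Q)$ is infinite while $\lmlt(Q)$ is finite (and Table \ref{Tab2} lists finite simply connected quandles, e.g.\ of size $8$). That the restricted map is meant is also visible in the theorem's proof, where the identity $\ker{\overline{L_Q}}=\Adj(Q)^0_a$ only makes sense for the restriction to $\Adj(Q)^0$. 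Consequently your third paragraph collapses: the ``image sequence'' $1\to\dis(Q)\to\lmlt(Q)\to\mathbb{Z}\to 1$ does not exist in general ($\lmlt(Q)/\dis(Q)$ is typically a finite cyclic group), so there is no internal decomposition $\lmlt(Q)=\dis(Q)\rtimes\langle\overline{L_Q}(t)\rangle$ with infinite cyclic complement to transport back.

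The corollary nevertheless follows, and more directly than your detour through $\lmlt(Q)$: the paper records the splitting $\Adj(Q)\cong\Adj(Q)^0\rtimes\mathbb{Z}$ with $\Adj(Q)^0=\ker{\varepsilon}$, and Theorem \ref{caratt simply connected}(iii), correctly read, gives an isomorphism $\Adj(Q)^0\cong\dis(Q)$; substituting the isomorphic normal subgroup into that semidirect decomposition yields $\Adj(Q)\cong\dis(Q)\rtimes\mathbb{Z}$. So your intermediate claim $\Adj(Q)^0\cong\dis(Q)$ is exactly the right key fact, but it must be taken as the content of condition (iii) itself, not derived by restricting a (nonexistent) isomorphism defined on all of $\Adj(Q)$.
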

%\begin{proof}
%The isomorphism $\overline{\Lambda_Q}$ provides an isomorphism between $\Adj(Q)\cong \Adj(Q)^0\rtimes \Z$ and $\dis(Q)\rtimes \Z$.
%\end{proof}
%
%\comment{A result similar to Corollary \ref{adjoint for simply connected} has been proved for conjugation quandles arising from conjugacy classes in simply connected groups in \cite[Theorem 1.28]{Eisermann}. }
The converse of Corollary \ref{adjoint for simply connected} holds for finite connected quandles. Indeed if $Q$ is finite and connected then $\Adj(Q)^0$ is also finite \cite[Lemma 2.19]{QuadraticNichols}. The mapping $\overline{L_Q}$ is surjective and then also injective since $|\dis(Q)|=|\Adj(Q)^0|$. There are infinite counterexamples, see \cite[Example 1.25]{Eisermann}.

%\begin{example} 
%Examples of principal quandles which are not simply connected exist. Let $Q$ be a connected non faithful quandle $Q $ of order $p^{3}$ for some prime $p$. Then $ Q/\lambda_Q $ has cardinality $p$ or $p^{2}$, so it is affine in both cases. 
%\end{example}

%\comment{TO DO: It would be nice to compute by hands some examples of simply connected.}

%\comment{For simply connected quandles it is enough to consider abelian groups. Try with universal target group as Farinati did}

Connected quandles with a cyclic displacement group and connected quandles with doubly transitive displacement group are simply connected \cite{MeAndPetr} and simply connected quandles of size $p^2$ have been classified in \cite{Vendramin_p_square_cohomology}. Table \ref{Tab2} collects all the other simply connected quandles up to size $47$ which do not fall in these families (the data have been computed using Theorem \ref{caratt simply connected} and the RIG library \cite{RIG}). Simply connected quandles need not to be faithful, as witnessed by some of the quandles in Table \ref{Tab2}.

\begin{center}
\begin{table}
\caption{Simply connected quandles up to size $47$.}
\label{Tab2}
\begin{tabular}{|c|l|}
\hline
  Size & {\tt SmallQuandle}(Size,-)   \\
 \hline
8 &  1\\
% \hline
 24 & 1, 2, 8, 24, 25 \\
% \hline  
 % 25 & 6, 7, 9, 10, 12, 13, 19, 20, 21, 22 \\
% \hline   
   27 & 1, 6, 14, 27, 28, 29, 30, 31, 32, 33, 34\\
 %\hline    
    40 & 4, 5, 6, 27, 28, 29, 30, 31, 32\\ 
 %\hline    
    45& 38, 39, 40, 41, 42, 43  \\
 \hline
\end{tabular}
\end{table}
\end{center}

\section{Coverings and identities}\label{sec:identities}

\subsection{Terms and identities}

A \emph{term} $t=t(x_1,\dots,x_n)$ is a well-formed formal expression using the variables $x_1,\dots,x_n$ and the left quasigroup operations $\{*,\ld\}$.
We will often omit parentheses, assuming implicitly the right parenthesizing; for example, $x*y*z\,\ld\, u*v$ will stand for $x*(y*(z\,\ld\,(u*v)))$. Occasionally, we will use juxtaposition for terms involving just $*$, e.g. $zxyz$ will stand for $z*(x*(y*z))$.

Formally, an \emph{identity} is a pair of terms, to be written as $t=s$. Two terms $t,s$ are called equivalent in a structure $A$ (in a class $\mathcal C$, resp.) if the identity $t=s$ holds in $A$ (in every structure in $\mathcal C$, resp.).

%LTT def
%Following \cite[Section 4.1]{CP}, we say that a left quasigroup $Q$ has the \emph{ltt} property (the \emph{left translation term property}), if every term is equivalent in $Q$ to a term of the form \[s_1(z_1)\bullet_1 s_2(z_2)\bullet_2\dots\bullet_{m-1} s_{m-1}(z_{m-1})\bullet_m s_m(z_m),\] where $s_1,\ldots s_m$ are unary terms, $z_1,\dots,z_m\in\{x_1,\dots,x_n\}$ and $\bullet_1,\dots,\bullet_m\in\{*,\ld\}$. It is well known that all racks have the ltt property \cite[Proposition 4.1]{CP}.

%\begin{proposition}
%In racks, every term $t(x_1,\dots,x_n)$ is equivalent a term of the form \[z_1\bullet_1 z_2\bullet_2\dots\bullet_{m-1} z_{m-1}\bullet_m z_m,\] where $z_1,\dots,z_m\in\{x_1,\dots,x_n\}$ and $\bullet_1,\dots,\bullet_m\in\{*,\ld\}$.
%\end{proposition}

%\begin{proof}
%In racks, $L_{x\ast y}=L_xL_yL_x^{-1}$ and $L_{x\ld y}=L_x^{-1}L_yL_x$, hence $(x\ast y)\ast z=x\ast (y\ast (x\,\ld\, z))$ and $(x\,\ld\, y)*z=x\,\ld\, (y\ast (x\ast z))$ are valid rack identities. Apply inductively the identities to remove all non-trivial left subterms.
%\end{proof}

Following \cite{CS}, an identity is called \emph{inner} if it has the form 
\[z_1\bullet_1 z_2\bullet_2 \dots \bullet_{m-1}z_m\bullet_m y=y,\] 
where $z_1,\dots,z_m$ are selected arbitrarily from a set of variables $x_1,\dots,x_n$, excluding $y$, and all $\bullet_i\in\{*,\ld\}$. 
The \emph{symmetric laws} are a particular example: a quandle is called \emph{$n$-symmetric} if it satisfies the identity 
\begin{equation*}
\underbrace{x*x*\ldots*x}_{n-\text{times}}*\,y=y.
\end{equation*} 
Another example is \emph{mediality}. It is usually defined as the identity $(x*y)*(u*v)=(x*u)*(y*v)$, but for racks it is easily proved to be equivalent to abelianness of the displacement group \cite[Proposition 2.4]{hsv}, which can be written as the inner identity $x*y\,\ld\, u*v\,\ld\, y*x\,\ld\, v*u\,\ld\,z=z$. 

Following \cite{PR}, a left quasigroup $Q$ is called \emph{$n$-reductive}, if the composition of any $n$ right translations is a constant mapping, i.e., if the expression 
$((\ldots((u*x_1)*x_2)\ldots)*x_{n-1})*x_n$
does not depend on the choice of $u$. Equivalently, if $Q$ satisfies the \emph{reductive law}
%\begin{equation}\label{n-reductivity}
\[ ((\ldots((u*x_1)*x_2)\ldots)*x_{n-1})*x_n = ((\ldots((v*x_1)*x_2)\ldots)*x_{n-1})*x_n. \]
%\end{equation}
A left quasigroup is 1-reductive if and only if it is permutational. It is easy to check that 2-reductive racks are medial, but there exist non-medial 3-reductive quandles.
Reductive medial quandles were studied extensively in \cite[Sections 6, 8]{JPSZ}.
(In recent quandle literature, unfortunately, reductivity is defined in several different ways that are equivalent for medial quandles, but not in general. We decided to refer back to the original source \cite{PR} which explains the phenomenon of reductivity in the class of algebraic structures called modes.) 

\begin{lemma}\label{l:reductive} 
Let $Q$ be a Cayley left quasigroup. Then $Q$ is $n$-reductive if and only if $Q/\lambda_Q$ is $(n-1)$-reductive.
\end{lemma}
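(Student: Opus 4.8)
The plan is to reduce $n$-reductivity of $Q$ to a statement purely about the Cayley kernel $\lambda_Q$, and then transport that statement into the retract $Q/\lambda_Q$ by projecting along $p\colon Q\to Q/\lambda_Q$, $w\mapsto [w]$. The hypothesis that $Q$ is a Cayley left quasigroup enters at exactly one point: it guarantees $\lambda_Q$ is a congruence, so that $p$ is a homomorphism.

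First I would write the reductive law in terms of right translations $R_x\colon u\mapsto u*x$. Introducing the abbreviation
\[ t(w) := (\dots((w*x_1)*x_2)\dots)*x_{n-1} = R_{x_{n-1}}\cdots R_{x_1}(w), \]
the $n$-reductive law says that $t(u)*x_n = t(v)*x_n$ for all $x_1,\dots,x_n,u,v\in Q$. The crucial observation is that $t(w)*x_n = L_{t(w)}(x_n)$. Hence, for fixed $x_1,\dots,x_{n-1},u,v$, the equality $t(u)*x_n=t(v)*x_n$ holds for \emph{every} $x_n\in Q$ if and only if the two left translations agree, i.e. $L_{t(u)}=L_{t(v)}$, which by definition of the Cayley kernel means $t(u)\,\lambda_Q\,t(v)$. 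Therefore $Q$ is $n$-reductive if and only if $t(u)\,\lambda_Q\,t(v)$ for all $x_1,\dots,x_{n-1},u,v\in Q$.

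Next I would apply $p$. Since $Q$ is Cayley, $\lambda_Q\in\mathrm{Con}(Q)$ and $p$ is a surjective homomorphism, so $t(u)\,\lambda_Q\,t(v)$ is equivalent to $p(t(u))=p(t(v))$, and
\[ p(t(w)) = (\dots(([w]*[x_1])*[x_2])\dots)*[x_{n-1}], \]
which is the composite of $n-1$ right translations evaluated at $[w]$ in $Q/\lambda_Q$. Because $p$ is onto, as $u,v,x_1,\dots,x_{n-1}$ range over $Q$ the images $[u],[v],[x_1],\dots,[x_{n-1}]$ range over all of $Q/\lambda_Q$. Thus the condition obtained in the previous paragraph is verbatim the $(n-1)$-reductive law in $Q/\lambda_Q$, and both implications follow simultaneously.

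The only genuine content is the crux identity $t(w)*x_n=L_{t(w)}(x_n)$ together with the remark that demanding independence of the final factor for all $x_n$ is precisely Cayley-equivalence of the intermediate values $t(u),t(v)$; the remaining steps are bookkeeping, and the sole use of the Cayley hypothesis is to make $p$ a homomorphism. I would finish by checking the boundary case: for $n=1$ the argument collapses to ``$u*x_1=v*x_1$ for all $u,v,x_1$'', i.e. $L_u=L_v$ for all $u,v$, i.e. $\lambda_Q=Q\times Q$, so $Q/\lambda_Q$ is a one-element left quasigroup, matching the convention under which the empty composition is constant exactly on a singleton (the $0$-reductive case).
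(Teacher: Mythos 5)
Your proof is correct and follows essentially the same route as the paper's: the paper's one-line argument is precisely your observation that quantifying $t(u)*x_n=t(v)*x_n$ over all $x_n$ amounts to $t(u)\,\lambda_Q\,t(v)$, which projects to the $(n-1)$-reductive law in $Q/\lambda_Q$. Your write-up merely makes explicit the bookkeeping (surjectivity of the projection, the role of the Cayley hypothesis, the $n=1$ boundary case) that the paper leaves implicit.
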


\begin{proof}
The reductive law is equivalent to stating that \[ (\ldots((u*x_1)*x_2)\ldots)*x_{n-1} \ \lambda_Q\ (\ldots((v*x_1)*x_2)\ldots)*x_{n-1}\] for every $x_1,\dots,x_{n-1}\in Q$. 
\end{proof}

\begin{corollary}\label{simple reductive}
There is no non-trivial connected $n$-reductive quandle. %In particular, the only simple $n$-reductive quandle is the projection quandle of size $2$.
\end{corollary}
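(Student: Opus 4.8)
The plan is to argue by induction on $n$, using Lemma \ref{l:reductive} to pass from $Q$ to its retract $Q/\lambda_Q$. Throughout I will use two facts established earlier: quandles form a Cayley class, so $\lambda_Q$ is a congruence and $Q/\lambda_Q$ is again a quandle (and in particular a Cayley left quasigroup, so the lemma applies to it as well); and homomorphic images of connected left quasigroups are connected, so $Q/\lambda_Q$ is connected whenever $Q$ is.

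For the base case $n=1$, a $1$-reductive left quasigroup is permutational, which for a quandle means all left translations coincide; by idempotence each $L_x$ then satisfies $L_x(x)=x$, forcing $L_x=\mathrm{id}$ for all $x$, so $Q$ is a projection quandle. A connected projection quandle has $\lmlt(Q)=1$, and this acts transitively only when $|Q|=1$; hence $Q$ is trivial. For the inductive step I assume the claim for $n-1$ and take a connected $n$-reductive quandle $Q$. By Lemma \ref{l:reductive}, $Q/\lambda_Q$ is $(n-1)$-reductive, and by the remarks above it is a connected quandle, so the induction hypothesis forces it to be trivial. This means $\lambda_Q=Q\times Q$, i.e. all left translations of $Q$ are equal, and exactly as in the base case idempotence makes every translation the identity; thus $Q$ is a projection quandle and connectedness yields $|Q|=1$.

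The argument is essentially routine once Lemma \ref{l:reductive} is available, and the induction even collapses to a single mechanism, since in every step the conclusion $\lambda_Q = Q\times Q$ is reduced to triviality by the same idempotence-plus-connectedness computation. The one point genuinely requiring care, and the only place where the \emph{quandle} (as opposed to merely \emph{rack}) hypothesis is used, is precisely this last reduction: for racks there exist non-trivial connected permutation racks, so the passage from ``all translations equal'' to ``all translations trivial'' fails without idempotence. I therefore expect the verification that a connected permutational quandle must be trivial to be the crux of the proof.
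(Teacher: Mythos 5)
Your proof is correct and follows essentially the same route as the paper: induction on $n$ via Lemma \ref{l:reductive}, with the base/collapsing step being that a connected permutational (hence projection) quandle is trivial. The paper's own proof is just a terser version of the same argument.
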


\begin{proof}
If $n=1$, then $Q$ is a connected projection quandle, hence trivial. If $n>1$ then $Q/\lambda_Q$ is connected and $(n-1)$-reductive, hence $\lambda_Q=1_Q$ by the induction assumption, and again, $Q$ is a connected projection quandle, hence trivial. 
\end{proof}

%For example, the quandle $Q_{m,n}$ of \cite[Example 1.6]{Eisermann} is 2-reductive.

\subsection{When covering preserves an identity?}

Let $t$ be a term in variables $x_1,\dots,x_n,y$ where $y$ is the rightmost. We define a formal expression $\Theta_t$ in variables $x_1,\dots,x_n,y$ using new symbols $\theta,\circ,^{-1}$ recursively, by 
\[ \Theta_y=1, \qquad \Theta_{t*s}=\theta(t,s)\circ\Theta_s,\qquad \Theta_{t\ldiv s}=\theta(t,t\ldiv s)^{-1}\circ\Theta_s. \]
In particular, if $t=t_1t_2\dots t_my$, $m\geq 1$, is a term that only uses $*$, then
\[\Theta_t=\theta(t_1,t_2\dots t_m y)\circ\theta(t_2,t_3\dots t_m y)\circ\dots\circ\theta(t_{m-1},t_m y)\circ\theta(t_m,y).\]
For a particular left quasigroup $Q$, cocycle $\theta$, and a choice of $x_1,\dots,x_n,y$ from $Q$, we will treat the expression $\Theta_t$ as composition of the respective values of $\theta$ or its inverses.
(This construction extends to left quasigroups \cite[Definition 4.2]{CS} and it generalizes it to more general terms and to non-abelian cocycles.)

\begin{example}
For $t=\underbrace{x\dots x}_{n}y$, we have $t_1=\ldots=t_n=x$ and 
\[\Theta_t=\theta(x,\underbrace{x\dots x}_{n-1}y)\circ\theta(x,\underbrace{x\dots x}_{n-2}y)\circ\ldots\circ\theta(x,xy)\circ\theta(x,y).\]
\end{example}

\begin{example}
For $t=(xy)(zw)$, we have $t_1=xy$, $t_2=z$ and $\Theta_t=\theta(xy,zw)\circ\theta(z,w)$.
\end{example}

\begin{lemma}\label{l:Theta}
Let $t$ be a term in variables $x_1,\dots,x_n,y$ where $y$ is the rightmost.
Let $Q$ be a left quasigroup and consider the covering extension $E=Q\times_\theta A$.
Then for every $(u_1,a_1),\dots,(u_n,a_n),(v,b)\in E$,
\[t((u_1,a_1),\dots,(u_n,a_n),(v,b)) = (t(u_1,\dots,u_n,v),\Theta_t(u_1,\dots,u_n,v)(b)).\]
\end{lemma}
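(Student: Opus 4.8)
The statement is an identity about how a term $t$ evaluates in the covering extension $E = Q \times_\theta A$, asserting that the first coordinate is just $t$ evaluated on the $Q$-components, while the second coordinate is $\Theta_t$ applied to those same $Q$-components and acting on $b$. The plan is to prove this by structural induction on the term $t$, mirroring exactly the recursive definition of $\Theta_t$. The base case and the two inductive cases (for $*$ and for $\ld$) correspond one-to-one with the three clauses $\Theta_y = 1$, $\Theta_{t*s} = \theta(t,s)\circ\Theta_s$, and $\Theta_{t\ld s} = \theta(t,t\ld s)^{-1}\circ\Theta_s$.

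First I would handle the base case $t = y$: here $t((u_1,a_1),\dots,(v,b)) = (v,b)$, and since $\Theta_y = 1$ by definition, the right-hand side reads $(v, 1(b)) = (v,b)$, so the two agree. For the inductive step with $t = r * s$, I would apply the induction hypothesis to both subterms $r$ and $s$, writing $r(\dots) = (r(u_1,\dots,v), \Theta_r(\dots)(b))$ and similarly for $s$ (note $y$ is the rightmost variable, so it occurs in $s$ but need not occur in $r$; the second coordinate of the value of $r$ therefore depends only on the $u_i$ and $v$ and will be irrelevant because of how $*$ acts). Then I would compute $r(\dots) * s(\dots)$ directly from the defining formula \eqref{extensions by theta}: the first coordinate is $r(\dots)_Q * s(\dots)_Q = r(u_1,\dots,v)*s(u_1,\dots,v)$, which is $t(u_1,\dots,v)$ as required, and the second coordinate is $\theta_{r(u_1,\dots,v),\,s(u_1,\dots,v)}$ applied to the second coordinate of $s(\dots)$, namely $\theta(r,s)\circ\Theta_s$ evaluated at $b$, which is precisely $\Theta_{r*s}(\dots)(b)$.

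The case $t = r \ld s$ is analogous but uses the second line of \eqref{extensions by theta}, where the relevant cocycle value is $\theta_{x,x\ld y}^{-1}$; substituting $x = r(u_1,\dots,v)$ and noting that the first argument of $\ld$ supplies $x\ld y = s(u_1,\dots,v)$ matches the factor $\theta(r, r\ld s)^{-1}$ appearing in $\Theta_{r\ld s}$. The main point requiring care—and the only place where something could go wrong—is bookkeeping of which subterm carries the rightmost variable $y$: in both binary cases $y$ lives in the right operand $s$, so the induction hypothesis for $s$ genuinely produces the dependence on $b$, while the left operand $r$ contributes only to the subscript of the cocycle value and not to the input being acted upon. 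I expect no serious obstacle; the proof is a routine unwinding of the two definitions in parallel, and the cleanest presentation is simply to display the computation of both coordinates in each inductive case and observe that each matches the recursive clause for $\Theta_t$ verbatim.
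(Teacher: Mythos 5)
Your proposal is correct and follows essentially the same route as the paper: a structural induction on $t$ with base case $t=y$ and the two binary cases for $*$ and $\ld$, unwinding the operation on $E$ and the recursive definition of $\Theta_t$ in parallel. The paper's proof is exactly this (it even writes out only the $*$ case and says the $\ld$ case is similar), so there is nothing further to add beyond tidying the phrasing in your $\ld$ case, where the cocycle subscript pair is $(x,\,x\ld y)$ with $x=r(u_1,\dots,v)$ and $y=s(u_1,\dots,v)$, matching $\theta(r,r\ld s)^{-1}$ evaluated at those arguments.
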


\begin{proof}
Straightforward induction. For $m=0$, we have $t=y$, $\Theta_t=1$ and $t(v,b)=(v,b)$. For $t=s*r$, we have
\begin{align*} 
t((u_1,a_1),\dots,(u_n,a_n),(v,b)) &= s((u_1,a_1),\dots,(u_n,a_n),(v,b))*r((u_1,a_1),\dots,(u_n,a_n),(v,b))  \\
& = (s(u_1,\dots,u_n)*r(u_1,\dots,u_n),\ \theta_{s(u_1,\dots,u_n),r(u_1,\dots,u_n)}(\Theta_r(u_1,\dots,u_n,v)(b)) \\
&= (t(u_1,\dots,u_n),\ \Theta_t(u_1,\dots,u_n,v)(b))
\end{align*}
using the induction assumption for $r$ in the second step, and the recursive definition of $\Theta_t$ in the last step.
For $t=s\ldiv r$ we proceed similarly.
\end{proof}

The following proposition describes when a covering extension satisfies an identity $t=s$ where both terms $t,s$ have the same rightmost variable (such as mediality or $n$-symmetry). It generalizes \cite[Theorem 4.2(ii)]{CS}, which addresed the special case of abelian covering extensions and identities without left division.

\begin{proposition}\label{p:sat}
Let $t=s$ be an identity in variables $x_1,\dots,x_n,y$ where both terms $t,s$ have the same rightmost variable $y$.
Let $Q$ be a left quasigroup, and consider the covering extension $E=Q\times_\theta A$.
The following statements are equivalent:
\begin{enumerate}
	\item[(i)] $E$ satisfies the identity $t=s$;
	\item[(ii)] $Q$ satisfies the identity $t=s$ and the equality $\Theta_t(u_1,\dots,u_n,v)=\Theta_s(u_1,\dots,u_n,v)$ holds for every $u_1,\dots,u_n,v\in Q$.
\end{enumerate}
\end{proposition}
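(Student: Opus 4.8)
The plan is to apply Lemma \ref{l:Theta} directly, which computes the value of any term in the covering extension $E = Q \times_\theta A$ as a pair whose first coordinate is the value of the term in $Q$ and whose second coordinate is $\Theta_t$ applied to the last variable's $A$-component. Since both $t$ and $s$ share the rightmost variable $y$, evaluating the two terms at the same tuple of points $(u_1,a_1),\dots,(u_n,a_n),(v,b)$ produces two pairs in $E$, and the identity $t = s$ holds in $E$ precisely when these pairs agree in both coordinates simultaneously, for every choice of arguments.

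First I would fix arbitrary elements $(u_1,a_1),\dots,(u_n,a_n),(v,b) \in E$ and write out, using Lemma \ref{l:Theta}, the two evaluations
\[
t((u_1,a_1),\dots,(v,b)) = (t(u_1,\dots,v),\, \Theta_t(u_1,\dots,v)(b)), \qquad s((u_1,a_1),\dots,(v,b)) = (s(u_1,\dots,v),\, \Theta_s(u_1,\dots,v)(b)).
\]
These two pairs are equal if and only if the first coordinates agree, namely $t(u_1,\dots,v) = s(u_1,\dots,v)$, and the second coordinates agree, namely $\Theta_t(u_1,\dots,v)(b) = \Theta_s(u_1,\dots,v)(b)$. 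This is the core reduction, and it is where the shared-rightmost-variable hypothesis is essential: without it, the two $\Theta$-expressions would be applied to different components of the argument tuple and the clean separation of coordinates would fail.

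For the equivalence (i) $\Leftrightarrow$ (ii), the forward direction is to assume $E \models t = s$ and recover both conditions in (ii). Projecting onto the first coordinate (equivalently, noting that the canonical projection $\pi : E \to Q$ is a surjective homomorphism) forces $Q \models t = s$. For the $\Theta$-condition, I would fix $u_1,\dots,u_n,v \in Q$ arbitrarily, then observe that equality of the second coordinates must hold for every $b \in A$; since $\Theta_t(u_1,\dots,v)$ and $\Theta_s(u_1,\dots,v)$ are permutations of $A$ agreeing on every input $b$, they are equal as permutations. The reverse direction is immediate: if $Q \models t = s$ and the $\Theta$-equality holds for all arguments, then both coordinates of the two evaluations coincide for every tuple in $E$, so $E \models t = s$.

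The main obstacle is not conceptual but bookkeeping: one must verify that evaluating the terms at a tuple in $E$ whose first coordinates are $u_1,\dots,u_n,v$ indeed produces $\Theta_t$ and $\Theta_s$ evaluated at exactly those same $Q$-elements $u_1,\dots,u_n,v$, with the surviving $A$-argument being precisely $b$, the component of the shared rightmost variable $y$. This is guaranteed by Lemma \ref{l:Theta}, so the argument is essentially a clean invocation of that lemma followed by the coordinatewise analysis of equality in the product set $Q \times A$. The only subtlety worth flagging explicitly is the passage from ``$\Theta_t(\dots)(b) = \Theta_s(\dots)(b)$ for all $b$'' to ``$\Theta_t(\dots) = \Theta_s(\dots)$ as permutations,'' which is exactly the extensionality of functions and uses the freedom to range $b$ over all of $A$.
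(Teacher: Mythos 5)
Your proposal is correct and follows exactly the paper's route: the paper's proof consists of the single sentence that the statement follows immediately from Lemma \ref{l:Theta}, and your coordinatewise analysis (including the extensionality step for the permutations $\Theta_t$ and $\Theta_s$) is just the careful spelling-out of that invocation.
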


\begin{proof}
The proof follows immediately from Lemma \ref{l:Theta}.
\end{proof}

\begin{example}
Assume that $Q$ is $n$-symmetric. A covering extension of $Q$ over $\theta$ is $n$-symmetric if and only if 
\[\Theta_{x\dots xy}(u,v)=\theta(u,\underbrace{u\dots u}_{n-1}v)\circ\theta(u,\underbrace{u\dots u}_{n-2}v)\circ\ldots\circ\theta(u,uv)\circ\theta(u,v)=1\] 
for every $u,v\in Q$.
\end{example}

\begin{example}
Assume that $Q$ is medial.  A covering extension of $Q$ over $\theta$ is medial if and only if 
\[\Theta_{(xy)(zw)}(r,s,t,u)=\theta(rs,tu)\circ\theta(t,u)=\theta(rt,su)\circ\theta(s,u)=\Theta_{(xz)(yw)}(r,s,t,u)\] 
for every $r,s,t,u\in Q$.
\end{example}

\begin{remark}
A non-trivial idempotent cover cannot satisfy any identity $t=s$ where the rightmost variables are different (such as commutativity): the blocks of the Cayley kernel are projection quandles, and they fail such identities. Therefore, we exclude such identities from our study.
\end{remark}

\subsection{Negative examples}\label{negative examples}

We are not aware of any interesting identity preserved by rack coverings in general. For example, 2-symmetry is not preserved, the order of translations in the cover can exceed any finite bound. The following example answers \cite[Question 8.8]{CSV} negatively.

\begin{example}
Let $Q$ be the quandle defined by the following multiplication table:
\[\begin{array}{|c|ccc|}\hline
 &0&1&2\\\hline
0&0&2&1\\
1&0&1&2\\
2&0&1&2\\\hline
\end{array}\]
Clearly, $Q$ is 2-symmetric.
Let $A$ be an abelian group, fix $a\in A$, and define $\theta:Q^2\to A$ by $\theta(0,2)=a$ and $\theta(x,y)=0$ for all pairs $(x,y)\neq(0,2)$. 
It is straightforward to verify that $\theta$ is an abelian cocycle, for any parameter $a \in A$.
Consider the covering extension $E=Q\times_\theta A$. The order of the translation $L_{(0,0)}$ in $\lmlt(E)$ is $n=2\mathrm{ord}(a)$, twice the order of $a$ in $A$, since 
\[(1,0)\mapsto(2,0)\mapsto(1,a)\mapsto(2,a)\mapsto(1,2a)\mapsto(2,2a)\mapsto\ldots\mapsto(1,(n-1)a)\mapsto(2,(n-1)a)\mapsto(1,0).\]
Therefore, the cover is not $m$-symmetric for any $m<n$. (It is easy to check that the cover is actually $n$-symmetric.)
\end{example}

Reductivity is not satisfied either (clearly, non-trivial covers of 1-reductive racks are 2-reductive), however, we have the following semi-positive result.

\begin{proposition}
Let $Q$ be a rack satisfying the identity $s=t$. Then every cover of $Q$ satisfies the identity $s*z=t*z$
%\[ s(x_1,\ldots,x_n)\ast z=t(y_1,\ldots,y_m)\ast z. \]
where $z$ is a new variable. In particular, every cover of an $n$-reductive rack is $(n+1)$-reductive.
\end{proposition}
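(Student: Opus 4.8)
The plan is to exploit the defining property of a covering homomorphism --- that its kernel lies inside the Cayley kernel of the domain --- so that elements with equal image acquire equal left translations. Fix a covering homomorphism $f\colon E\to Q$ and evaluate the two terms at an arbitrary tuple $\bar e$ of elements of $E$, one for each variable of the identity; write $s_E=s(\bar e)$ and $t_E=t(\bar e)$ for the resulting values in $E$.

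Since $f$ is a homomorphism of left quasigroups it commutes with term evaluation, so $f(s_E)=s(f(\bar e))$ and $f(t_E)=t(f(\bar e))$, and these agree because $Q$ satisfies $s=t$. Hence $(s_E,t_E)\in\ker f\subseteq\lambda_E$, which is precisely the assertion $L_{s_E}=L_{t_E}$.

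The conclusion can then be read off directly: for every $z\in E$ we have $s_E*z=L_{s_E}(z)=L_{t_E}(z)=t_E*z$, and since $\bar e$ and $z$ were arbitrary this is the identity $s*z=t*z$ in $E$. For the ``in particular'' clause I would specialize to the $n$-reductive law, taking $s$ and $t$ to be $(\dots((u*x_1)*x_2)\dots)*x_n$ and $(\dots((v*x_1)*x_2)\dots)*x_n$; then $s*z=t*z$ is verbatim the $(n+1)$-reductive law with $z$ playing the role of $x_{n+1}$, so every cover of an $n$-reductive rack is $(n+1)$-reductive.

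I do not expect a genuine obstacle: the entire argument rests on the single translation of the covering condition $\ker f\subseteq\lambda_E$ into ``equal images have equal left translations,'' after which appending the fresh variable $z$ turns an equality of translations into a bona fide identity. The only subtlety worth flagging is that nothing is assumed about the rightmost variables of $s$ and $t$; this causes no trouble because the appended factor $*z$ makes $z$ the common rightmost variable of both sides, circumventing the obstruction recorded in the remark above for identities with differing rightmost variables. Note also that $E$ need not itself be a rack for the argument to run, since only the homomorphism property of $f$ and the inclusion $\ker f\subseteq\lambda_E$ are used.
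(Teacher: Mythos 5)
Your proof is correct. The underlying idea is the same as the paper's---the covering condition forces the two term values to have equal left translations---but you reach it by a different and somewhat cleaner route. The paper proves this as a one-line corollary of Proposition \ref{p:sat}, which is stated for covering extensions $Q\times_\theta A$ and phrased in the cocycle formalism: one checks $\Theta_{t*z}=\theta(t,z)=\theta(s,z)=\Theta_{s*z}$. You instead work directly with an arbitrary covering homomorphism $f\colon E\to Q$ and the inclusion $\ker f\leq\lambda_E$, deducing $L_{s(\bar e)}=L_{t(\bar e)}$ from $f(s(\bar e))=f(t(\bar e))$. This buys you two small things: the argument is self-contained (no appeal to the $\Theta$ machinery or to Lemma \ref{l:Theta}), and it applies verbatim to any cover as defined in the paper, without first identifying it with a covering extension (which the paper only guarantees when the kernel is a uniform congruence). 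Your closing remarks---that the appended variable $z$ supplies a common rightmost variable, and that $E$ need only be a left quasigroup---are both accurate and worth keeping.
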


\begin{proof}
It follows directly from Proposition \ref{p:sat}, since $\Theta_{t*z}=\theta(t,z)=\theta(s,z)=\Theta_{s*z}$.
\end{proof}

From now on, we will focus on connected racks. Not surprisingly, many identities are not preserved. We will show three examples: mediality, a short identity in two variables, and a short inner identity.

\begin{example}
Let $Q=\aff(\Z_2^2,f)$ be the (unique) connected quandle of order 4. It is medial and it satisfies the identities $xyyxy=y$ and $xyxyxyz=z$ (the latter is the "ababab" identity from \cite{CS}). 
But $Q$ has a connected abelian covering extension over the group $\Z_2$, namely the (unique) non-affine connected quandle of order 8, which fails each of the three identities, and also fails $xyxyxy=y$.
(The information can be collected from various calculations in \cite{CS,CSV}, or one can verify the claim directly using the explicit construction of the 8-element quandle in \cite[Example 8.6]{hsv}.)
%Q=[ [ 1, 4, 2, 3 ], [ 3, 2, 4, 1 ], [ 4, 1, 3, 2 ], [ 2, 3, 1, 4 ] ]
%cocycle base: 
%[[0,1,0,1],[0,0,1,1],[1,0,0,1],[0,0,0,0]]---trivial
%[[0,1,1,0],[1,0,1,0],[1,1,0,0],[0,0,0,0]]
%[[0,1,0,1],[1,0,0,1],[0,0,0,0],[1,1,0,0]]
%[[0,1,1,0],[0,0,1,1],[0,0,0,0],[1,0,1,0]]
%[ [ 0*Z(2), Z(2)^0, Z(2)^0, 0*Z(2), Z(2)^0, 0*Z(2), Z(2)^0, 0*Z(2), Z(2)^0, Z(2)^0, 0*Z(2), 0*Z(2), 0*Z(2),
%    0*Z(2), 0*Z(2), 0*Z(2) ],
%[ 0*Z(2), Z(2)^0, 0*Z(2), Z(2)^0, 0*Z(2), 0*Z(2), Z(2)^0, Z(2)^0, Z(2)^0, 0*Z(2),
%    0*Z(2), Z(2)^0, 0*Z(2), 0*Z(2), 0*Z(2), 0*Z(2) ], 
% [ 0*Z(2), Z(2)^0, 0*Z(2), Z(2)^0, Z(2)^0, 0*Z(2), 0*Z(2), Z(2)^
%    0, 0*Z(2), 0*Z(2), 0*Z(2), 0*Z(2), Z(2)^0, Z(2)^0, 0*Z(2), 0*Z(2) ], 
% [ 0*Z(2), Z(2)^0, Z(2)^0, 0*Z(2), 0*Z(2),
%    0*Z(2), Z(2)^0, Z(2)^0, 0*Z(2), 0*Z(2), 0*Z(2), 0*Z(2), Z(2)^0, 0*Z(2), Z(2)^0, 0*Z(2) ] ]
%corresponding tables:
%[ [ 1, 2, 8, 7, 4, 3, 5, 6 ], [ 1, 2, 8, 7, 4, 3, 5, 6 ], [ 6, 5, 3, 4, 8, 7, 1, 2 ], [ 6, 5, 3, 4, 8, 7, 1, 2 ],  [ 8, 7, 2, 1, 5, 6, 3, 4 ], [ 8, 7, 2, 1, 5, 6, 3, 4 ], [ 3, 4, 5, 6, 1, 2, 7, 8 ], [ 3, 4, 5, 6, 1, 2, 7, 8 ] ]
 
% [ [ 1, 2, 8, 7, 3, 4, 6, 5 ], [ 1, 2, 8, 7, 3, 4, 6, 5 ], [ 6, 5, 3, 4, 7, 8, 2, 1 ], [ 6, 5, 3, 4, 7, 8, 2, 1 ], [ 7, 8, 1, 2, 5, 6, 3, 4 ], [ 7, 8, 1, 2, 5, 6, 3, 4 ], [ 4, 3, 6, 5, 1, 2, 7, 8 ], [ 4, 3, 6, 5, 1, 2, 7, 8 ]

% [ 1, 2, 8, 7, 4, 3, 5, 6 ], [ 1, 2, 8, 7, 4, 3, 5, 6 ], [ 5, 6, 3, 4, 8, 7, 2, 1 ], [ 5, 6, 3, 4, 8, 7, 2, 1 ], [ 7, 8, 1, 2, 5, 6, 3, 4 ], [ 7, 8, 1, 2, 5, 6, 3, 4 ], [ 4, 3, 5, 6, 2, 1, 7, 8 ], [ 4, 3, 5, 6, 2, 1, 7, 8 ]
%one trivial, three other have the above properties
\end{example}

\subsection{Positive examples}\label{sec:symmetric law}

The following result shows that certain type of identities is preserved in connected covers. It includes the symmetric laws, thus solving \cite[Conjecture 5.2]{CS}.

\begin{proposition}\label{inner identity preserved}
Let $Q$ be a rack satisfying the identity $t=y$ where \[ t(x_1,\dots,x_n,y)=x_{i_1}\bullet_1\dots\bullet_{m-1} x_{i_m}\bullet_m y.\] 
Let $E$ be a connected rack cover of $Q$ such that for every $a_1,\ldots,a_n\in E$ there exists $b\in E$ such that $t(a_1,\dots,a_n,b)=b$.  
Then $E$ satisfies the identity $t=y$.
\end{proposition}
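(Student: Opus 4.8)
The plan is to argue entirely inside the group $\lmlt(E)$, avoiding any explicit covering-extension representation. Fix a covering homomorphism $f\colon E\to Q$ and set $\alpha=\ker{f}$, so that $\alpha\leq\lambda_E$ by the very definition of a covering homomorphism. For a fixed choice $a_1,\dots,a_n\in E$, the term $t$, read from the inside out (using $x\ld z=L_x^{-1}(z)$), expresses the map $b\mapsto t(a_1,\dots,a_n,b)$ as a composition of translations
\[ g:=L_{a_{i_1}}^{\varepsilon_1}\circ\cdots\circ L_{a_{i_m}}^{\varepsilon_m}\in\lmlt(E), \]
where $\varepsilon_j=+1$ if $\bullet_j={*}$ and $\varepsilon_j=-1$ if $\bullet_j={\ld}$. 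Since $t$ and $y$ share the rightmost variable, the identity $t=y$ holds in $E$ if and only if, for every such choice of $a_1,\dots,a_n$, this $g$ acts as the identity on $E$, i.e. $g=1$ in $\lmlt(E)$. So the whole statement reduces to proving that each such $g$ is trivial.

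First I would push $g$ down to $Q$. The map $\pi_\alpha\colon\lmlt(E)\to\lmlt(E/\alpha)$ is a group homomorphism with $L_a\mapsto L_{[a]_\alpha}$, and under the isomorphism $E/\alpha\cong Q$ induced by $f$ it sends $L_a\mapsto L_{f(a)}$. Hence $\pi_\alpha(g)=L_{f(a_{i_1})}^{\varepsilon_1}\circ\cdots\circ L_{f(a_{i_m})}^{\varepsilon_m}$, which is precisely the translation composition computing $b\mapsto t(f(a_1),\dots,f(a_n),b)$ in $\lmlt(Q)$. Because $Q$ satisfies $t=y$, this element acts as the identity on $Q$, so $\pi_\alpha(g)=1$ and therefore $g\in\ker{\pi_\alpha}=\lmlt^\alpha$. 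Now I invoke the covering hypothesis: since $\alpha\leq\lambda_E$, Corollary \ref{prop:covering_central} gives $\lmlt^\alpha\leq\lmlt^{\lambda_E}=Z(\lmlt(E))$. Thus $g$ is a \emph{central} element of $\lmlt(E)$.

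The final step combines the existence hypothesis with connectedness. For the chosen $a_1,\dots,a_n$ the hypothesis supplies some $b\in E$ with $t(a_1,\dots,a_n,b)=b$, that is $g(b)=b$: the central element $g$ has a fixed point. Since $E$ is connected, $\lmlt(E)$ acts transitively on $E$, and a central element of a transitive permutation group that fixes one point fixes every point: for arbitrary $c\in E$ pick $h\in\lmlt(E)$ with $h(b)=c$, and then $g(c)=g(h(b))=h(g(b))=h(b)=c$, using $gh=hg$. Hence $g=\mathrm{id}_E$, i.e. $t(a_1,\dots,a_n,c)=c$ for all $c\in E$. Since $a_1,\dots,a_n$ were arbitrary, $E$ satisfies $t=y$.

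The computation is short and the only points needing care are purely bookkeeping: that the term $t$ read from the inside out is exactly the translation composition $g$, and that the existential hypothesis delivers a genuine fixed point of this same $g$ (rather than of some variant). The conceptual heart of the argument is the elementary group-theoretic fact that a central element of a transitive permutation group with a fixed point is the identity; this is what converts the \emph{local} fixed-point hypothesis into the \emph{global} identity. I do not expect to need the explicit form $E\cong Q\times_\theta A$ or Proposition \ref{p:sat}, though one could alternatively phrase the same argument through the cocycle expression $\Theta_t$ of Lemma \ref{l:Theta}, with the centrality of $\lmlt^\alpha$ playing the role that connectedness plays above.
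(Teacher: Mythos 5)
Your argument is correct and is essentially the paper's own proof: both form the translation composition $g=L_{a_{i_1}}^{\varepsilon_1}\cdots L_{a_{i_m}}^{\varepsilon_m}$, deduce $g\in\lmlt^\alpha\leq Z(\lmlt(E))$ from the identity holding in $Q$ together with Corollary \ref{prop:covering_central}, and conclude $g=1$ from the fixed point plus semiregularity of central subgroups of a transitive group. No gaps; your bookkeeping with the indices $i_1,\dots,i_m$ is in fact slightly more careful than the paper's.
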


\begin{proof}
Let $\alpha$ denote the kernel of the projection $E\to Q$. Fix $a_1,\dots,a_n\in E$ and consider the mapping $h=L_{a_1}^{k_1}\ldots L_{a_n}^{k_n}\in\lmlt(E)$ where $k_i=1$ if $\bullet_i=*$, and $k_i=-1$ otherwise. Indeed, $t(a_1,\dots,a_n,e)=h(e)$ for every $e\in E$. We shall prove that $h$ is the identity mapping.

By assumption, $h$ has a fixed point.
Since $Q\simeq E/\alpha$ satisfies the identity $t=y$, we have $h\in\lmlt^\alpha\leq\lmlt^{\lambda_E}=Z(\lmlt(E))$, using Corollary \ref{prop:covering_central} in the last equality. As every central subgroup of a transitive group is semiregular we have that $h=1$.
% Since the group $\lmlt(E)$ is transitive then for every $b\in E$ there exists $g\in \lmlt(E)$ such that $b=g(a)$. So, $h(b)=h(g(a))=g(h(a))=g(a)=b$, i.e. $h=1$.
%
%However, if a central element of a transitive group has a fixed point, it must be the identity mapping (proof: if $G$ is transitive, $f\in Z(G)_x$, then for every $y$, taking $g\in G$ with $g(x)=y$, we obtain $f(y)=f(g(x))=g(f(x))=g(x)=y$).
\end{proof}

In quandles, the fixed point assumption holds for any identity in just two variables, since $t(a,a)=a$ for every term $t$ and element $a$. In particular, it applies to the symmetric laws.

\begin{corollary}\label{corollary for connected}
Every connected quandle cover of an $n$-symmetric quandle is $n$-symmetric.
\end{corollary}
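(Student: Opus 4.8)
The plan is to obtain the corollary as an immediate instance of Proposition~\ref{inner identity preserved}. First I would observe that the $n$-symmetric law $\underbrace{x*\cdots*x}_{n}*y=y$ is an inner identity of the shape $t=y$ treated in that proposition: the term $t=\underbrace{x*\cdots*x}_{n}*y$ uses only the two variables $x$ and $y$, with $y$ rightmost and every connective equal to $*$. Thus the structural hypothesis on $t$ is met for free, and the base quandle $Q$ satisfies $t=y$ by assumption. Moreover a connected quandle cover of $Q$ is in particular a connected rack cover, so the cover hypothesis of the proposition applies to $E$ as well.

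The only hypothesis requiring attention is the fixed-point condition: for every value $a\in E$ of the single free variable $x$ one must exhibit $b\in E$ with $t(a,b)=b$, i.e.\ $\underbrace{a*\cdots*a}_{n}*b=b$. Here I would exploit that $E$ is a quandle, not merely a rack. In any quandle every term in a single variable evaluates to that variable, by an easy induction from $a*a=a$ and $a\ld a=a$. Taking $b=a$ therefore collapses $t(a,b)$ to a one-variable term at $a$, giving $t(a,a)=a=b$, so the required fixed point always exists.

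With $Q$ being $n$-symmetric, $E$ a connected cover (hence a connected rack cover), and the fixed-point condition verified, Proposition~\ref{inner identity preserved} yields that $E$ satisfies $t=y$, i.e.\ $E$ is $n$-symmetric. I expect essentially no obstacle beyond the proposition itself; the one genuine point is the fixed-point condition, and this is exactly where the passage from racks to quandles is indispensable. The remark preceding the corollary already isolates this: the identity $t(a,a)=a$ is what makes the symmetric laws accessible, while the earlier unbounded-order example shows the conclusion can fail for general (non-idempotent) rack covers.
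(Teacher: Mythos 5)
Your proposal is correct and follows exactly the paper's route: the corollary is deduced from Proposition~\ref{inner identity preserved} applied to $t(x,y)=x*\cdots*x*y$, with the fixed-point hypothesis supplied by idempotence via $t(a,a)=a$, just as the paper notes for any two-variable identity in a quandle.
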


\begin{proof}
Apply the previous proposition to $t(x,y)=x\dots xy$.
\end{proof}

The corollary can be generalized to arbitrary covers of connected quandles.

\begin{theorem}\label{thm:symmetric}
Every quandle cover of a connected $n$-symmetric quandle is $n$-symmetric.
\end{theorem}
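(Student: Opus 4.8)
The plan is to fix an arbitrary element $a\in E$ and prove the single identity $L_a^n=\mathrm{id}_E$, which is precisely $n$-symmetry; since $a$ is arbitrary this yields the theorem. The case in which $E$ is itself connected is exactly Corollary~\ref{corollary for connected}, so the whole content is to drop the connectedness hypothesis on $E$. Write $\pi\colon E\to Q$ for the covering homomorphism and $\alpha=\ker{\pi}$; by the definition of a cover, $\alpha\le\lambda_E$.

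First I would show that $g:=L_a^n$ is central in $\lmlt(E)$. Because $Q$ is $n$-symmetric, $L_{\pi(a)}^n=\mathrm{id}_Q$, so for every $e\in E$ we have $\pi(L_a^n(e))=L_{\pi(a)}^n(\pi(e))=\pi(e)$; that is, $L_a^n$ keeps every point inside its $\alpha$-block, so $L_a^n\in\lmlt^\alpha$. Since $\alpha\le\lambda_E$, monotonicity gives $\lmlt^\alpha\le\lmlt^{\lambda_E}=Z(\lmlt(E))$, the last equality by Corollary~\ref{prop:covering_central}. Thus $g$ is central, and consequently its fixed-point set is a union of $\lmlt(E)$-orbits (if $g(e)=e$ and $h\in\lmlt(E)$, then $g(h(e))=h(g(e))=h(e)$).

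Next I would produce a fixed point of $g$ inside every orbit. Since $\pi$ is surjective and $Q$ is connected, a short lifting argument shows that each $\lmlt(E)$-orbit $O'$ satisfies $\pi(O')=Q$; in particular $O'$ contains a point $c$ with $\pi(c)=\pi(a)$. Then $c\,\alpha\,a$, and $\alpha\le\lambda_E$ forces $L_c=L_a$, whence $g=L_a^n=L_c^n$. Idempotency of the quandle $E$ gives $L_c(c)=c*c=c$, so $g(c)=L_c^n(c)=c$. Hence every orbit meets the fixed-point set of $g$; being a union of orbits that meets each orbit, this set is all of $E$, i.e. $L_a^n=\mathrm{id}_E$.

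The main obstacle is the step that the connected case conceals: a priori $g=L_a^n$ only has the evident fixed point $a$, and the connected argument ``a central element of a transitive group with a fixed point is trivial'' no longer applies, since $\lmlt(E)$ need not be transitive. The crux that unlocks the general case is the observation that, with the kernel sitting inside the Cayley kernel, \emph{every} orbit already contains an element $c$ with $L_c=L_a$; combined with centrality, this upgrades the single fixed point into one fixed point per orbit. This route also lets me avoid the alternative strategy of proving each orbit is a connected subquandle and applying Corollary~\ref{corollary for connected} orbitwise, whose sticking point would be verifying that orbits of $\lmlt(E)$ are connected as subquandles (and which, even granting that, would still require the same $L_c=L_a$ trick to pass between distinct orbits).
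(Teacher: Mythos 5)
Your proof is correct and rests on exactly the same ingredients as the paper's: centrality of $L_a^n$ via $\lmlt^\alpha\leq\lmlt^{\lambda_E}=Z(\lmlt(E))$ (Corollary \ref{prop:covering_central}), the fact that $\alpha\leq\lambda_E$ forces $L_c=L_a$ whenever $\pi(c)=\pi(a)$, idempotency to produce a fixed point, and connectedness of $Q$ to transport it. The paper packages this slightly more directly (for arbitrary $a,b$ it picks $h$ with $h(a)\,\alpha\,b$ and computes $L_b^n=hL_a^nh^{-1}=L_a^n$, so $L_b^n(a)=a$), whereas you decompose $E$ into $\lmlt(E)$-orbits, but the argument is essentially the same.
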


\begin{proof}
Let $\alpha$ denote the kernel of the projection $E\to Q$. Fix $a,b\in E$. Since $Q$ is $n$-symmetric, we have $L_a^n\in\lmlt^\alpha\leq\lmlt^{\lambda_E}=Z(\lmlt(E))$, using Corollary \ref{prop:covering_central} in the last equality. 
Since $E/\alpha$ is connected, there is $h\in\lmlt(E)$ which maps $[a]_\alpha$ into $[b]_\alpha$. In particular, $h(a)\,\alpha\,b$, and since $\alpha\leq\lambda_E$, we have $L_{h(a)}=L_b$. 
Since $L_a^n$ is central, $L_b^n=L_{h(a)}^n=hL_a^n h^{-1}=L_a^n$ and thus $L_b^n(a)=L_a^n(a)=a$.
\end{proof}

\begin{corollary}%\comment{maybe we can used directly \ref{thm:symmetric} and avoid \ref{cayley_ker_fix}}
Let $Q$ be a connected quandle. Then $Q$ is $n$-symmetric if and only if $\widehat{L_a}$ has order $n$ in $\aut{(\dis(Q))}$.
\end{corollary}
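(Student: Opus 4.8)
The plan is to show that, for a connected quandle $Q$ and any $a\in Q$, the order of $\widehat{L_a}$ in $\aut(\dis(Q))$ coincides with the order of $L_a$ in $\lmlt(Q)$, and that this order does not depend on the choice of $a$; the corollary then follows, since $Q$ is $n$-symmetric precisely when $L_x^n=1$ for every $x$. First I would record two easy reductions. Because $Q$ is a quandle, $L_a(a)=a$, hence $L_a^k(a)=a$ and $L_a^k\in\lmlt(Q)_a$ for every $k$. Because $Q$ is connected, $\lmlt(Q)$ acts transitively, so every point is of the form $b=g(a)$ with $L_b=gL_ag^{-1}$; thus all translations are conjugate and share a common order, and the $n$-symmetric law $L_x^n=1$ holds for all $x$ as soon as it holds for a single $a$. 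This reduces the statement to a claim about the fixed translation $L_a$.

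Next I would unravel the meaning of the order of $\widehat{L_a}$: by definition $\widehat{L_a}^n=\mathrm{id}_{\dis(Q)}$ iff $L_a^n\alpha L_a^{-n}=\alpha$ for every $\alpha\in\dis(Q)$, i.e.\ iff $L_a^n\in C_{\lmlt(Q)}(\dis(Q))$. So the order of $\widehat{L_a}$ is the least $n$ for which $L_a^n$ centralizes $\dis(Q)$. The heart of the proof is then the identity $C_{\lmlt(Q)}(\dis(Q))\cap\lmlt(Q)_a=1$: if $z$ fixes $a$ and commutes with every element of $\dis(Q)$, then for arbitrary $b\in Q$ I choose $g\in\dis(Q)$ with $g(a)=b$ (possible because $\dis(Q)$ is transitive on a connected quandle, as the orbits of $\dis(Q)$ and $\lmlt(Q)$ coincide) and compute $z(b)=zg(a)=gz(a)=g(a)=b$, whence $z=\mathrm{id}$. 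Combining this with the fact that $L_a^n\in\lmlt(Q)_a$ always holds, I conclude that $L_a^n\in C_{\lmlt(Q)}(\dis(Q))$ forces $L_a^n=1$, the converse being trivial. Therefore the least $n$ with $\widehat{L_a}^n=\mathrm{id}_{\dis(Q)}$ equals the least $n$ with $L_a^n=1$, that is, $\mathrm{ord}(\widehat{L_a})=\mathrm{ord}(L_a)$.

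Finally I would assemble the equivalence: $Q$ is $n$-symmetric $\iff$ $L_a^n=1$ $\iff$ $\widehat{L_a}^n=\mathrm{id}_{\dis(Q)}$ $\iff$ the order of $\widehat{L_a}$ divides $n$, so that the minimal symmetry degree of $Q$ is exactly $\mathrm{ord}(\widehat{L_a})$. The only genuinely non-formal step is the self-centralizing computation $C_{\lmlt(Q)}(\dis(Q))\cap\lmlt(Q)_a=1$, and I expect it to be the main point of the argument: its force comes from simultaneously exploiting idempotency, which places every power $L_a^n$ in the stabilizer $\lmlt(Q)_a$, and connectedness, which upgrades ``fixes $a$ and centralizes $\dis(Q)$'' to ``fixes every point''. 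Everything else is bookkeeping about conjugate translations and the definition of $\widehat{L_a}$.
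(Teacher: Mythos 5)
Your proof is correct, but it takes a genuinely different route from the paper's. The paper deduces the corollary from its covering machinery: it forms the principal connected cover $E=\mathcal{Q}(\dis(Q),\widehat{L_a})$ of $Q$ (via Lemma \ref{Prop:extension homogeneous}), observes that $E$ is $n$-symmetric exactly when $\widehat{L_a}$ has order $n$ (translations of a principal quandle $\mathcal Q(G,f)$ act, up to a constant, by powers of $f$), and then transfers $n$-symmetry in both directions --- downward because homomorphic images inherit identities, upward by Theorem \ref{thm:symmetric}. You instead prove the sharper, self-contained statement $\mathrm{ord}(\widehat{L_a})=\mathrm{ord}(L_a)$ for a connected quandle, by unwinding $\widehat{L_a}^{\,n}=\mathrm{id}$ to $L_a^n\in C_{\lmlt(Q)}(\dis(Q))$ and then showing $C_{\lmlt(Q)}(\dis(Q))\cap\lmlt(Q)_a=1$; your computation $z(b)=zg(a)=gz(a)=b$ is valid since $\dis(Q)$ is transitive on a connected quandle, and idempotency supplies $L_a^n\in\lmlt(Q)_a$. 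Your key step is in fact the same mechanism the paper uses elsewhere (``every central subgroup of a transitive group is semiregular,'' in the proof of Proposition \ref{inner identity preserved}), just applied to the centralizer of $\dis(Q)$ rather than the center of $\lmlt(Q)$. What each approach buys: the paper's argument is a clean illustration of Theorem \ref{thm:symmetric} and requires no new computation; yours avoids the coset-quandle construction and the covering theorem entirely, is more elementary, and isolates the reusable fact that the order of a translation in a connected quandle equals the order of the conjugation automorphism it induces on the displacement group. You are also slightly more careful than the statement itself about ``order $n$'' versus ``order dividing $n$.''
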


\begin{proof}
According to Lemma \ref{Prop:extension homogeneous} the quandle $E=\mathcal{Q}(\dis(Q),\widehat{L_a})$ is a connected cover of $Q$. The quandle $E$ is $n$-symmetric if and only if the order of $\widehat{L_a}$ is $n$. 
Obviously, if $E$ is $n$-symmetric, then so is every homomorphic image. Conversely, if $Q$ is $n$-symmetric, then so is $E$ by Theorem \ref{thm:symmetric}.
\end{proof}

\begin{remark}
Under sufficiently strong assumptions, many identities are preserved. The extremal case is that of simply connected quandles (this includes, for example, connected affine quandles over cyclic groups \cite{MeAndPetr}). Indeed, trivial covers preserve any identity with the same rightmost variable. 
\end{remark}

\section{A universal algebraic characterization of coverings}\label{sec:ua}

\subsection{Strong abelianness and strong solvability}\label{sec:strongly abelian}
Let $\alpha\geq\beta$ be congruences of an algebraic structure $A$. Following \cite[Section 3]{TCT}, we say that $\alpha$ is \emph{strongly abelian} over $\beta$, if for every term $t(x,y_1,\dots,y_n)$, every pair $u\,\alpha\,v$ and all $a_i,b_i,c_i\in A$ such that $a_i\,\alpha\,b_i\,\alpha\,c_i$, for every $i=1,\dots,n$,
%\begin{equation}\label{term C}
\[ t(u,a_1,\dots,a_n)\ \beta\ t(v,b_1,\dots,b_n)\quad\text{implies}\quad t(u,c_1,\dots,c_n)\ \beta\ t(v,c_1\dots,c_n). \]
%\end{equation}
A congruence is called \emph{strongly abelian}, if it is strongly abelian over the smallest congruence, $0_A$.  
An algebraic structure $A$ is called \emph{strongly abelian} if its largest congruence, $1_A$, is strongly abelian.

An algebraic structure $A$ is called \emph{strongly solvable of length at most $n$}, if it possesses congruences 
$0_A=\alpha_0\leq\alpha_1\leq\ldots\leq\alpha_n=1_A$ such that $\alpha_{i+1}$ is strongly abelian over $\alpha_i$, for every $i$.
Observe that if $\xi\leq\beta\leq\alpha$ are congruences of $A$ then $\alpha$ is strongly abelian over $\beta$ in $A$ if and only if $\alpha/\xi$ is strongly abelian over $\beta/\xi$ in $A/\xi$
(in particular, if and only if $\alpha/\beta$ is strongly abelian in $A/\beta$). Therefore, strong solvability carries over to factors.

Permutation racks are strongly abelian, since every term depends only on one variable. Conversely, every strongly abelian rack is a permutation rack: considering the term $t(x,y)=x\ast y$ and any $u,v$, we have $t(u,u\ld v)=t(v,v\ld v)$, and strong abelianness gives $t(u,w)=t(v,w)$ for every $w$, which means $L_u=L_v$. 
This observation can be generalized.

\begin{proposition}\label{l:strongly_abelian iff under_lambda} 
Let $Q$ be a left quasigroup and $\alpha$ a congruence of $Q$. The following conditions are equivalent: 
\begin{itemize}
\item[(i)] $\alpha$ is strongly abelian;
\item[(ii)] $\alpha\leq \lambda_Q$.
\end{itemize}
%Then (i) $\Rightarrow$ (ii). If $Q$ is an ltt left quasigroup or a Cayley left quasigroup then (ii) $\Rightarrow$ (i).
%The following are equivalent:
%\begin{enumerate}
%	\item[(i)]  $\alpha$ is strongly abelian.
%\item[(ii)] $\alpha\leq \lambda_Q$.
%\item[(iii)] $Q$ is a covering of $Q/\alpha$.
%\item[(iii)] $\dis_\alpha=1$.
%\end{enumerate}
%Moreover, if $\alpha$ is strongly abelian, then every factor $\alpha/\beta$ is strongly abelian in $Q/\beta$.
\end{proposition}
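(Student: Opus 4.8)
The plan is to reduce everything to a single normal form for terms in a left quasigroup. Recall that $a * w = L_a(w)$ and $a \ld w = L_a^{-1}(w)$, so by following the \emph{rightmost} branch of a term $t(x,y_1,\dots,y_n)$ down to its rightmost variable $z$ (where $z$ is one of $x,y_1,\dots,y_n$), one can write $t$, as a function, in the factored form
\[ t = L_{s_1}^{\epsilon_1} L_{s_2}^{\epsilon_2} \cdots L_{s_k}^{\epsilon_k}(z), \]
where $s_1,\dots,s_k$ are the left subterms met along this branch, each $\epsilon_i\in\{+1,-1\}$ records whether the corresponding operation is $*$ or $\ld$, and $L_{s_i}$ abbreviates $L_{s_i(x,y_1,\dots,y_n)}$. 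This follows by a routine induction on term structure and is the only structural input I need.

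For the implication (ii) $\Rightarrow$ (i), I would fix a term $t$, a pair $u\,\alpha\,v$, and elements with $a_i\,\alpha\,b_i\,\alpha\,c_i$, and compare the four substitutions $(u,a_1,\dots,a_n)$, $(v,b_1,\dots,b_n)$, $(u,c_1,\dots,c_n)$, $(v,c_1,\dots,c_n)$. The key point is that $\alpha$ is a congruence, so under any two of these substitutions each left subterm $s_i$ takes $\alpha$-related values; since $\alpha\leq\lambda_Q$, those values are $\lambda_Q$-related and hence define the \emph{same} translation $L_{s_i}$. Consequently the whole prefix $L_{s_1}^{\epsilon_1}\cdots L_{s_k}^{\epsilon_k}$ is one fixed permutation $\Phi$ across all four substitutions, and the value of $t$ under each is just $\Phi$ applied to the substituted value of the rightmost variable $z$. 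A short case split then finishes the argument: if $z=x$ then the four values are $\Phi(u)$ or $\Phi(v)$ and the implication $\Phi(u)=\Phi(v)\Rightarrow\Phi(u)=\Phi(v)$ is trivial; if $z=y_m$ then the conclusion reads $\Phi(c_m)=\Phi(c_m)$, which is vacuous.

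For the implication (i) $\Rightarrow$ (ii), I would take $u\,\alpha\,v$ and prove $L_u=L_v$ by checking $u*w=v*w$ for an arbitrary $w$. Applying strong abelianness to the term $t(x,y)=x*y$, the witnesses I would choose are $a_1=c_1=w$ and $b_1=v\ld(u*w)$. One checks that $b_1\,\alpha\,w$, because $u*w\,\alpha\,v*w$ forces $v\ld(u*w)\,\alpha\,v\ld(v*w)=w$, so the hypothesis $a_1\,\alpha\,b_1\,\alpha\,c_1$ holds; and one has $t(u,a_1)=u*w=v*(v\ld(u*w))=t(v,b_1)$. Strong abelianness then yields $t(u,c_1)=t(v,c_1)$, i.e.\ $u*w=v*w$, as desired.

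The hard part is precisely this last direction. The strong abelianness axiom only allows one to move the right-hand argument \emph{within a single $\alpha$-class}, whereas concluding $L_u=L_v$ requires $u*w=v*w$ for \emph{every} $w\in Q$, with no restriction. The resolution is the asymmetric choice above: take $a_1=c_1$ equal to the target $w$ itself and absorb the discrepancy into $b_1=v\ld(u*w)$, whose $\alpha$-equivalence to $w$ is exactly what makes the axiom applicable for an arbitrary $w$. Once this choice is in hand, both directions are straightforward calculations from the factored normal form.
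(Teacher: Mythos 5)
Your proof is correct and follows essentially the same route as the paper's: the same rightmost-branch normal form $t=s_1\bullet_1(s_2\bullet_2(\cdots s_m))$ for (ii)\,$\Rightarrow$\,(i), and for (i)\,$\Rightarrow$\,(ii) the same witness trick with $t(x,y)=x*y$, merely absorbing the discrepancy into $b_1=v\ld(u*w)$ where the paper symmetrically uses $a_1=a\ld(b*c)$. No gaps.
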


\begin{proof}
(i) $\Rightarrow$ (ii).
Assume that $a\,\alpha\,b$. Then also $(a\ast c)\,\alpha\,(b\ast c)$ and $c\, \alpha\, (a\ld(b\ast c))$ for every $c\in Q$. Applying strong abeliannes to the term $t(x,y)=x\ast y$ and equality $t(a,a\ld(b*c))=t(b,c)$, we obtain $t(a,c)=t(b,c)$ for every $c$, i.e., $L_a=L_b$ and $a\,\lambda_Q\, b$.

%(ii) $\Rightarrow$ (i) \emph{for ltt left quasigroups}.
%Since $Q$ has ltt, it is sufficient to check condition \eqref{term C} only for terms \[ t(x,y_1,\dots,y_n)=s_1(z_1)\bullet_1( s_2(z_2)\bullet_2(\ldots (s_{m-1}(z_{m-1})\bullet_m s_m(z_m)))) \] where $s_i$ are unary terms, $z_i\in\{x,y_1,\dots,y_n\}$ and $\bullet_i\in\{*,\ld\}$. There are two cases.

%Case $z_m=x$: Assume that $t(u,\bar a)=t(v,\bar b)$ for some $u\,\alpha\,v$ and $a_i\,\alpha\,b_i$. Since all arguments are pairwise $\lambda_Q$-related, we have $h(u)=t(u,\bar a)=t(v,\bar b)=h(v)$ for some $h\in\lmlt(Q)$. Hence $u=v$, and $t(u,\bar c)=t(v,\bar c)$ holds trivially.

%Case $z_m=y_i$: For any $u\,\alpha\,v$, $c_i\in Q$, since $u$ and $v$ are $\lambda_Q$-related, we have $t(u,\bar c)=h(c_i)=t(v,\bar c)$ for some $h\in\lmlt(Q)$.
%
%(ii) $\Leftrightarrow$ (iii). $\dis_\alpha=1$ if and only if $L_a=L_b$ for every $a\,\alpha\,b$, which is what (ii) says.
%
%Finally, if $\beta\leq \alpha\leq\lambda_Q$, then $\alpha/\beta\leq\lambda_Q/\beta\leq\lambda_{Q/\beta}$ and the equivalence %of (i), (ii) finishes the proof.

(ii) $\Rightarrow$ (i). %\emph{for Cayley left quasigroups}.
We verify that $\alpha$ is strongly abelian.
Let $t(x,y_1,\dots,y_n)$ be a term and write it in the form \[ t=s_1\bullet_1(s_2\bullet_2(\ldots(s_{m-1}\bullet_{m-1}s_m)))\] 
where $s_1,\dots,s_m$ are terms, $s_m$ is a single variable, and all $\bullet_j\in \{*,\ldiv\}$. 
Let $u\,\alpha\,v$ and $a_i\,\alpha\,b_i\,\alpha\,c_i$ for every $i=1,\dots,n$. Observe that for every $j=1,\dots,m$
\[ s_j(u,a_1,\dots,a_n)\,\alpha\,s_j(v,b_1,\dots,b_n) \text{ and } s_j(u,c_1,\dots,c_n)\,\alpha\,s_j(v,c_1,\dots,c_n).\]
Since $\alpha\leq\lambda_Q$, the respective left translations coincide.

Now assume that $t(u,a_1,\dots,a_n)=t(v,b_1,\dots,b_n)$. Since $L_{s_j(u,a_1,\dots,a_n)}=L_{s_j(v,b_1,\dots,b_n)}$ for every $j=1,\dots,m-1$, it follows that $s_m(u,a_1,\dots,a_n)=s_m(v,b_1,\dots,b_n)$.
But $s_m$ is a single variable, hence $s_m(u,c_1,\dots,c_n)=s_m(v,c_1,\dots,c_n)$, too. Since $L_{s_j(u,c_1,\dots,c_n)}=L_{s_j(v,c_1,\dots,c_n)}$ for every $j=1,\dots,m-1$, we obtain $t(u,c_1,\dots,c_n)=t(v,c_1,\dots,c_n)$.
%Assume that $\alpha\leq \lambda_Q$ and let $t(x_1,\ldots,x_n)$ be a term. Then $t(x_1,\ldots,x_n)=s(x_1,\ldots,x_n)\bullet r(x_1,\ldots,x_n)$ for suitable subterms $r,s$ and $\bullet\in \{*,\ldiv\}$. Let $t(u,\bar{a})=t(v,\bar{b})$ for $u\,\alpha\,v$ and $a_i\,\alpha\,b_i$ for every $1\leq i\leq n-1$. Then $s(u,\bar{a})\,\alpha\, s(v,\bar{b})$ and since $\alpha\leq \lambda_Q$ we have $L_{s(u,\bar{a})}=L_{s(v,\bar{b})}$. Therefore $r(u,\bar{a})=r(v,\bar{b})$. By induction on the number of occurrences of variables then $r(u,\bar{c})=r(v,\bar{c})$ for every $(n-1)$-tuple $\bar{c}$ with $c_i\,\alpha\,a_i$ for every $1\leq i\leq n-1$. Hence, since $s(u,\bar{c})\,\alpha\, s(v,\bar{c})$ then $L_{s(u,\bar{c})}=L_{s(v,\bar{c})}$ and therefore 
%\begin{equation*}
%t(u,\bar{c})=s(u,\bar{c})\bullet r(u,\bar{c})=s(v,\bar{c})\bullet r(v,\bar{c})=t(v,\bar{c}).\qedhere
%\end{equation*}
\end{proof}

%\textcolor{blue}{this is new. It basically extends the previous lemma to Cayley lQGs. }

%\begin{lemma}\label{l:strongly_abelian iff under_lambda_for_Cayle} Let $Q$ be a Cayley left quasigroup and $\alpha$ a congruence of $Q$. Then $\alpha$ is strongly abelian if and only if $\alpha\leq \lambda_Q$.
%The following are equivalent:
%\begin{enumerate}
%	\item[(i)]  $\alpha$ is strongly abelian.
%\item[(ii)] $\alpha\leq \lambda_Q$.
%\item[(iii)] $Q$ is a covering of $Q/\alpha$.
%\item[(iii)] $\dis_\alpha=1$.
%\end{enumerate}
%Moreover, if $\alpha$ is strongly abelian, then every factor $\alpha/\beta$ is strongly abelian in $Q/\beta$.
%\end{lemma}

\begin{theorem}\label{thm:strongly_solvable iff reductive}
Let $Q$ be a left quasigroup such that all homomorphic images of $Q$ are Cayley left quasigroups. Then the following conditions are equivalent:
\begin{enumerate}
	\item[(i)] $Q$ is strongly solvable of length at most $n$,
	\item[(ii)] $Q$ is $n$-reductive,
	\item[(iii)] $Q$ is $n$-multipermutational.
\end{enumerate}
\end{theorem}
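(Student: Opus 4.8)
The plan is to reduce all three conditions to a single combinatorial condition on the \emph{iterated Cayley kernel} series. I would define congruences $\beta_0=0_Q$ and, recursively, let $\beta_{k+1}$ be the preimage in $Q$ of $\lambda_{Q/\beta_k}$ under the projection $Q\to Q/\beta_k$, so that $\beta_{k+1}/\beta_k=\lambda_{Q/\beta_k}$ and $Q/\beta_k$ is precisely the $k$-th retract of $Q$. Here the hypothesis that all homomorphic images of $Q$ are Cayley left quasigroups is essential: it guarantees that $\lambda_{Q/\beta_k}$ is a congruence of $Q/\beta_k$, hence that each $\beta_k$ is a well-defined congruence of $Q$ and that the $\beta_k$ form an increasing chain. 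By construction, $Q$ is $n$-multipermutational if and only if $\beta_n=1_Q$. I will prove (ii)$\Leftrightarrow$(iii) directly, and then show that both (i) and (iii) are equivalent to $\beta_n=1_Q$.

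For (ii)$\Leftrightarrow$(iii) I would argue by induction on $n$. The base case $n=1$ holds because $Q$ is $1$-reductive iff its operation is independent of the first argument iff $\lambda_Q=1_Q$ iff the first retract $Q/\lambda_Q$ is trivial, i.e. $Q$ is $1$-multipermutational. For the inductive step, Lemma \ref{l:reductive} gives that $Q$ is $n$-reductive iff $Q/\lambda_Q$ is $(n-1)$-reductive, while the correspondence of retracts gives that $Q$ is $n$-multipermutational iff $Q/\lambda_Q$ is $(n-1)$-multipermutational. Since $Q/\lambda_Q$ again lies in our class, the induction hypothesis closes the loop.

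For (iii)$\Rightarrow$(i), the chain $0_Q=\beta_0\leq\beta_1\leq\dots\leq\beta_n=1_Q$ already witnesses strong solvability: each quotient $\beta_{k+1}/\beta_k=\lambda_{Q/\beta_k}$ is strongly abelian in $Q/\beta_k$ by Proposition \ref{l:strongly_abelian iff under_lambda}, which, by the remark that strong abelianness over $\beta_k$ is equivalent to strong abelianness of $\beta_{k+1}/\beta_k$ in the quotient, means $\beta_{k+1}$ is strongly abelian over $\beta_k$. For the converse (i)$\Rightarrow$(iii), I would take any strongly solvable series $0_Q=\alpha_0\leq\dots\leq\alpha_n=1_Q$ and prove $\alpha_i\leq\beta_i$ for all $i$ by induction, which forces $1_Q=\alpha_n\leq\beta_n$ and hence $\beta_n=1_Q$. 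The inductive step uses two facts: first, strong abelianness of $\alpha_{i+1}$ over $\alpha_i$ yields $\alpha_{i+1}/\alpha_i\leq\lambda_{Q/\alpha_i}$ via Proposition \ref{l:strongly_abelian iff under_lambda}; second, the projection $\phi\colon Q/\alpha_i\to Q/\beta_i$, well defined since $\alpha_i\leq\beta_i$, pushes the Cayley kernel forward, i.e. $L_a=L_b$ implies $L_{\phi(a)}=L_{\phi(b)}$ because $\phi$ is onto. Chaining these inclusions, a pair in $\alpha_{i+1}$ lands in $\lambda_{Q/\beta_i}=\beta_{i+1}/\beta_i$, so $\alpha_{i+1}\leq\beta_{i+1}$.

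The main obstacle I anticipate is the (i)$\Rightarrow$(iii) direction, where one must verify that the canonical iterated-Cayley series grows at least as fast as any strongly abelian series. The delicate point is the forward preservation of $\lambda$ under the surjection $\phi\colon Q/\alpha_i\to Q/\beta_i$; this is the only place where surjectivity, rather than a mere homomorphism, is needed, and it is what lets strong abelianness at level $i$ in the finer quotient transfer to membership in the next Cayley kernel of the coarser quotient. Once that lemma is in hand, the rest is routine induction together with the bookkeeping of the correspondence theorem.
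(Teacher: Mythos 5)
Your proof is correct. The equivalence (ii)$\Leftrightarrow$(iii) and the direction (iii)$\Rightarrow$(i) are essentially the paper's argument: an induction on $n$ through the retract, powered by Lemma \ref{l:reductive}, Proposition \ref{l:strongly_abelian iff under_lambda}, and the remark that strong abelianness of $\alpha$ over $\beta$ is the same as strong abelianness of $\alpha/\beta$ in the quotient (your chain $\beta_0\leq\dots\leq\beta_n$ is exactly what the paper's induction unrolls to). Where you genuinely diverge is in closing the loop from (i): the paper proves (i)$\Rightarrow$(ii) directly by passing to $Q/\alpha_1$, invoking the induction hypothesis there, and then transporting the reductive law from $Q/\alpha_1$ to $Q/\lambda_Q$ via $\alpha_1\leq\lambda_Q$; you instead prove (i)$\Rightarrow$(iii) by establishing the comparison $\alpha_i\leq\beta_i$ for an arbitrary strongly abelian series against the iterated Cayley-kernel series --- the analogue of ``the derived series descends fastest'' from group theory. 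Your key step, that a surjective homomorphism pushes the Cayley kernel forward ($L_a=L_b$ implies $L_{\phi(a)}=L_{\phi(b)}$), is sound and is in fact the same mechanism the paper uses implicitly when it transfers the reductive identity from $Q/\alpha_1$ to $Q/\lambda_Q$. What your organization buys is a slightly stronger and reusable statement (any strongly abelian series is dominated termwise by the canonical one), at the cost of introducing the $\beta_k$ bookkeeping; the paper's version is marginally shorter because it never needs the comparison lemma in isolation.
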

%\textcolor{blue}{Do we also need that all the factors are ltt? Maybe this is for free. Anyway, using the new Lemma \ref{l:strongly_abelian iff under_lambda} we can remove the ltt property from the hypothesis.}
\begin{proof}
We proceed by induction on $n$. For $n=1$, the theorem says that $Q$ is strongly abelian if and only if it is 1-reductive if and only if $\lambda_Q=1_Q$. The latter equivalence is obvious and the former one is a special case of Proposition \ref{l:strongly_abelian iff under_lambda} for $\alpha=1_Q$. Now assume that the theorem holds for all $k<n$.

$(i)\Rightarrow(ii)$
Assume that $Q$ is strongly solvable of length at most $n$, witnessed by the chain $0_Q=\alpha_0\leq\alpha_1\leq\ldots\leq\alpha_n=1_Q$.
Then $Q/\alpha_1$ is strongly solvable of length at most $n-1$, witnessed by the chain $0_{Q/\alpha_1}=\alpha_1/\alpha_1\leq\ldots\leq\alpha_n/\alpha_1=1_{Q/\alpha_1}$. By the induction assumption, $Q/\alpha_1$ is $(n-1)$-reductive, hence
\[(\ldots((u\ast x_1)*x_2)\ldots)*x_{n-1} \, \alpha_1 \, (\ldots((v*x_1)*x_2)\ldots)*x_{n-1}\]
for all $u,v,x_1,\dots,x_{n-1}\in Q$. Since $\alpha_1$ is strongly abelian, we have $\alpha_1\leq\lambda_Q$ by Proposition \ref{l:strongly_abelian iff under_lambda}, hence $Q/\lambda_Q$ is $(n-1)$-reductive, and thus $Q$ is $n$-reductive by Lemma \ref{l:reductive}.

$(ii)\Rightarrow(iii)$
If $Q$ is $n$-reductive, then $Q/\lambda_Q$ is $(n-1)$-reductive by Lemma \ref{l:reductive}, hence $(n-1)$-multipermutational by the induction assumption, and thus $Q$ is $n$-multipermutational. 

$(iii)\Rightarrow(i)$
Assume that $Q$ is $n$-multipermutational. Then $Q/\lambda_Q$ is $(n-1)$-multipermutational, hence, by the induction assumption, it is strongly solvable of length at most $n-1$, witnessed by a chain $0_{Q/\lambda_Q}=\lambda_Q/\lambda_Q\leq\alpha_1/\lambda_Q\leq\ldots\leq\alpha_n/\lambda_Q=1_{Q/\lambda_Q}$. Then $0_Q\leq\lambda_Q\leq\alpha_1\leq\ldots\leq\alpha_n=1_Q$ witnesses that $Q$ is strongly solvable of length at most $n$.
\end{proof}

\begin{corollary}
A connected rack $Q$ is strongly solvable if and only if it can be constructed from the trivial rack by repeated covering extensions.
\end{corollary}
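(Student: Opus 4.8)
The plan is to read off the corollary from Theorem~\ref{thm:strongly_solvable iff reductive}. Racks form a Cayley class and are closed under homomorphic images, so every factor of $Q$ is again a Cayley rack and the three equivalent conditions of that theorem are available for $Q$ and all its quotients. The device linking strong solvability to towers of coverings is the \emph{retraction series}
\[ Q=Q^{(0)}\longrightarrow Q^{(1)}\longrightarrow\cdots,\qquad Q^{(i+1)}=Q^{(i)}/\lambda_{Q^{(i)}}, \]
whose consecutive projections are covering homomorphisms directly from the definition of the Cayley kernel.

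For the forward implication I would argue as follows. If $Q$ is strongly solvable, then by Theorem~\ref{thm:strongly_solvable iff reductive} it is $n$-multipermutational, which means precisely that the retraction series above reaches the trivial rack after $n$ steps. Because $Q$ is connected, every factor $Q^{(i)}$ is connected (a homomorphic image of a connected left quasigroup is connected), and therefore each Cayley kernel $\lambda_{Q^{(i)}}$ is uniform. By the discussion in Section~\ref{cocycles}, a covering homomorphism with uniform kernel realizes its domain as a covering extension of its codomain; hence each $Q^{(i)}$ is isomorphic to a covering extension of $Q^{(i+1)}$. Reading the chain upward from $Q^{(n)}$, the trivial rack, to $Q^{(0)}=Q$ exhibits $Q$ as built from the trivial rack by $n$ successive covering extensions.

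For the converse, suppose $Q=P_n$ for a tower $P_0,P_1,\dots,P_n$ in which $P_0$ is trivial and each $P_{i+1}$ is a covering extension of $P_i$; here connectedness plays no role. I would prove by induction that $P_i$ is $(i+1)$-reductive. The one-element rack $P_0$ is $1$-reductive. For the inductive step, the covering homomorphism $P_{i+1}\to P_i$ has kernel contained in $\lambda_{P_{i+1}}$, so $P_{i+1}/\lambda_{P_{i+1}}$ is a homomorphic image of $P_i$; as reductivity is an identity it is inherited by this image, so $P_{i+1}/\lambda_{P_{i+1}}$ is $(i+1)$-reductive, and Lemma~\ref{l:reductive} upgrades this to $(i+2)$-reductivity of $P_{i+1}$. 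Thus $Q=P_n$ is $(n+1)$-reductive, and Theorem~\ref{thm:strongly_solvable iff reductive} returns strong solvability.

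The single delicate point is the uniformity of the kernels in the forward direction: a general covering homomorphism need not present its domain as an extension $Q\times_\theta A$, and it is exactly here that connectedness of $Q$, and hence of every factor in the retraction series, is indispensable. One could alternatively run the forward direction through Proposition~\ref{l:strongly_abelian iff under_lambda}, identifying the congruences of a strongly solvable chain with relations below the successive Cayley kernels, but the multipermutational route keeps the two finite lengths in plain sight. The remaining work is routine bookkeeping on the reductivity index, which I would track so that the lengths match consistently.
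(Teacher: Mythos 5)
Your proof is correct and follows essentially the route the paper intends: the corollary is read off from Theorem~\ref{thm:strongly_solvable iff reductive} via the multipermutational characterization, with the retraction series supplying the tower of coverings in one direction and an induction on the reductivity index (through Lemma~\ref{l:reductive}) supplying the converse. You also correctly isolate the one point where connectedness is actually needed, namely that uniformity of the Cayley kernels is what lets each retraction be realized as a genuine covering extension $Q^{(i+1)}\times_\theta A$.
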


%\textcolor{blue}{Shall we add an example of a reductive connected non-permutation rack?}

\subsection{Centrality and nilpotence}\label{sec:central cov}

%\comment{move this part to the section on central coverings? Define directly nilpotent quandles?}
%Abelianness and centrality for congruences of general algebras can be defined using the notion of commutator of congruences in the sense of \cite{comm}. 
% As for groups, a quandle $Q$ is {\it nilpotent of length $n$} if there exist a chain of congruences $$0_Q=\alpha_0 \leq \alpha_1\leq \ldots \leq \alpha_n=1_Q$$  such that $\alpha_{i+1}/\alpha_i$ is central in $Q/\alpha_i$ and that the {\it center} of $A$ is the biggest central congruence of $A$. 
 
In \cite{CP}, we demonstrated that commutator properties (in the sense of \cite{comm}) of rack congruences correspond nicely to commutator properties of the corresponding relative displacement groups. 
We refer to \cite{CP,comm} for the abstract definitions of central congruences and nilpotent algebraic structures. In the present paper, we will use the rack-theoretic characterization of the two notions, given by the next two propositions. Recall that $a\,\sigma_Q\, b$ iff $\dis(Q)_a=\dis(Q)_b$.

\begin{proposition}\label{central congruences} \cite[Theorem 1.1 and Proposition 5.9]{CP}
	Let $Q$ be a rack and $\alpha$ be a congruence of $Q$. Then $\alpha$ is a central congruence if and only if $\dis_{\alpha}$ is contained in the center of $\dis(Q)$ and $\alpha\leq \sigma_Q$. 
\end{proposition}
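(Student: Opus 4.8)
My plan is to work from the term-condition definition of centrality: $\alpha$ is central iff $C(\alpha,1_Q;0_Q)$ holds, i.e.\ for every term $t(\bar x,\bar y)$, every pair $\bar u\,\alpha\,\bar v$, and all tuples $\bar p,\bar q$,
\[ t(\bar u,\bar p)=t(\bar u,\bar q)\ \Longrightarrow\ t(\bar v,\bar p)=t(\bar v,\bar q). \]
The bridge to $\dis(Q)$ is the normal form for rack terms: left distributivity gives, by induction on the term, $t(\bar z)=L_{r_1}^{\epsilon_1}\cdots L_{r_s}^{\epsilon_s}(z_j)$, where $z_j$ is the rightmost variable, the $r_i$ are subterms and $\epsilon_i=\pm1$. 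I will freely use that translations are automorphisms, so $L_cL_xL_c^{-1}=L_{c*x}$; that $\dis_\alpha\trianglelefteq\lmlt(Q)$ (conjugating a generator $L_aL_b^{-1}$, $a\,\alpha\,b$, by $L_c$ yields the generator $L_{c*a}L_{c*b}^{-1}$); and that $\dis_\alpha$ moves every point inside its own $\alpha$-block.

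For the forward implication I would read off the two conditions from special instances of the term condition. To get $\alpha\le\sigma_Q$, let $a\,\alpha\,b$ and $g\in\dis(Q)_a$, realise $g$ by a displacement term $T(x;\bar z)$ with $x$ as free last variable, and apply $C(\alpha,1_Q;0_Q)$ with $x$ the $\alpha$-coordinate on $\{a,b\}$, comparing the parameters that produce $g$ with those that produce the identity: the premise row $x=a$ reads $g(a)=a$, so the conclusion row $x=b$ gives $g(b)=b$, whence $\dis(Q)_a=\dis(Q)_b$. To get $\dis_\alpha\le Z(\dis(Q))$, apply the term condition to $D(x_1,x_2;x_3,x_4,y)=[L_{x_1}L_{x_2}^{-1},L_{x_3}L_{x_4}^{-1}](y)$ with the $\alpha$-related rows $(a,a)$ and $(a,b)$ and columns $(c,d,y),(c,c,y)$; the premise row $(a,a)$ makes the first commutator factor trivial, so $D$ is constantly $y$ there, and the conclusion row forces $[L_aL_b^{-1},L_cL_d^{-1}](y)=y$ for all $y$. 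Thus each generator of $\dis_\alpha$ centralises each generator of $\dis(Q)$.

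The converse is the substantial direction. Assuming $\dis_\alpha\le Z(\dis(Q))$ and $\alpha\le\sigma_Q$, I would verify $C(\alpha,1_Q;0_Q)$ for a term in normal form $t=L_{r_1}^{\epsilon_1}\cdots L_{r_s}^{\epsilon_s}(z_j)$. Passing from the $\alpha$-coordinates $\bar u$ to $\bar v$ moves each $r_i$ within its $\alpha$-block, so the two products differ by an element of $\dis_\alpha$; using normality to pull these corrections to the front one writes $g(\bar v,\bar p)=\zeta_{\bar p}\,g(\bar u,\bar p)$ with $\zeta_{\bar p}\in\dis_\alpha$, and likewise for $\bar q$. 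If $z_j$ is a $1$-coordinate, then because $\zeta_{\bar p}$ is central in $\dis(Q)$ it is unchanged under conjugation by the $\dis(Q)$-part of $g(\bar u,\bar p)$; combined with the premise $t(\bar u,\bar p)=t(\bar u,\bar q)=:w$, both target values $t(\bar v,\bar p)$ and $t(\bar v,\bar q)$ reduce to $\eta(w)$ for one common central element $\eta\in\dis_\alpha$, and hence agree. If instead $z_j$ is an $\alpha$-coordinate, the premise says that a displacement in $\dis(Q)$ fixes one point of an $\alpha$-block, and $\alpha\le\sigma_Q$ transfers that fixity to the $\alpha$-related point, again yielding the conclusion.

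The main obstacle is exactly this last step: separating the two roles of the hypotheses and making the reduction precise. Centrality of $\dis_\alpha$ is what guarantees that the corrections produced by changing the $\alpha$-coordinates are conjugation-invariant, hence independent of the free $1$-coordinates and acting uniformly on the value $w$ pinned down by the premise; $\alpha\le\sigma_Q$ handles the single case where the rightmost variable itself varies. I expect the cleanest organisation to split on the position of $z_j$ and to reduce an arbitrary $t$ to these two cases via the normal form, or, alternatively, to deduce the statement from the general commutator formula of \cite{CP} relating $[\alpha,1_Q]$ to $[\dis_\alpha,\dis(Q)]$ together with the defect measured by $\sigma_Q$.
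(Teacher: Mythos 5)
The paper does not actually prove this proposition: it is imported from \cite{CP} (Theorem 1.1 and Proposition 5.9) and used as a black box, so there is no in-text argument to compare against. Your term-condition reconstruction is sound and essentially self-contained. The forward direction is fine: both witnessing terms you choose are legitimate left-quasigroup terms (a displacement word $L_{z_1}^{e_1}\cdots L_{z_k}^{e_k}(x)$ with exponent sum zero, where the ``identity'' column is obtained by setting all parameters equal so the word collapses to $L_c^0=1$; and the commutator word $[L_{x_1}L_{x_2}^{-1},L_{x_3}L_{x_4}^{-1}](y)$ with $\alpha$-rows $(a,a)$ and $(a,b)$), and the two instances of $C(\alpha,1_Q;0_Q)$ do yield $\alpha\leq\sigma_Q$ and $\dis_\alpha\leq Z(\dis(Q))$ respectively, the latter because $\dis_\alpha\leq\dis(Q)$ and generators commuting with generators forces $\dis_\alpha\leq Z(\dis(Q))$.

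For the converse, the one step you must expand is the assertion that the correction $\zeta_{\bar p}=g(\bar v,\bar p)g(\bar u,\bar p)^{-1}\in\dis_\alpha$ is literally the \emph{same} element for $\bar p$ and $\bar q$. Your stated reason (invariance under conjugation by the $\dis(Q)$-part) kills the dependence of the conjugating prefixes on $\bar p$, but the individual factors $L_{r_i(\bar v,\bar p)}L_{r_i(\bar u,\bar p)}^{-1}$ also depend on $\bar p$ through the subterms $r_i$, so a genuine induction on the structure of each $r_i$ is needed: for $r=r_1*r_2$ one computes $L_{r(\bar v,\bar p)}L_{r(\bar u,\bar p)}^{-1}=\delta_1\cdot L_{r_1(\bar u,\bar p)}\delta_2L_{r_1(\bar u,\bar p)}^{-1}\cdot L_{r(\bar u,\bar p)}\delta_1^{-1}L_{r(\bar u,\bar p)}^{-1}$, and parameter-independence follows because a conjugate of an element of $\dis_\alpha\leq Z(\dis(Q))$ by a word in translations depends only on the word's exponent sum (two such words with equal exponent sums differ by an element of $\dis(Q)$, which centralizes $\dis_\alpha$ by hypothesis and by normality of $\dis_\alpha$ in $\lmlt(Q)$). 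Once $\zeta_{\bar p}=\zeta_{\bar q}$ is secured, your two cases close correctly: if the rightmost variable is a $1_Q$-coordinate, both target values equal $\zeta(w)$ for the common $\zeta$ and the common value $w$ pinned by the premise; if it is an $\alpha$-coordinate, the premise says $g(\bar u,\bar q)^{-1}g(\bar u,\bar p)\in\dis(Q)$ (equal exponent sums again) fixes $u_j$, and $\alpha\leq\sigma_Q$ transfers this to $v_j$. So there is no gap in the ideas, but the ``one common central element $\eta$'' claim is where a careful write-up must do the real inductive work.
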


A covering is called \emph{central} if its kernel is a central congruence.

\begin{proposition}\label{nilpotency} \cite[Lemma 6.2]{CP} 
	A rack $Q$ is nilpotent if and only if the group $\dis(Q)$ is nilpotent. If $\dis(Q)$ is nilpotent of length $n$, then $Q$ is nilpotent of length at most $n+1$.
\end{proposition}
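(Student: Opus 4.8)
The plan is to prove both implications by transporting central series through the dictionary between congruences of $Q$ and normal subgroups of $\dis(Q)$, using the characterization of central congruences in Proposition \ref{central congruences}. Recall that $Q$ is nilpotent of length at most $k$ exactly when there is a chain of congruences $0_Q=\beta_0\le\beta_1\le\cdots\le\beta_k=1_Q$ with each $\beta_{i+1}$ central over $\beta_i$, and that a group is nilpotent of length at most $n$ precisely when it admits a central chain $1=Z_0\le\cdots\le Z_n=\dis(Q)$. The maps $\alpha\mapsto\dis_\alpha$ and $N\mapsto\c{N}$ will be the bridge, together with the relative version of Proposition \ref{central congruences} (equivalently, Proposition \ref{central congruences} applied inside the quotients $Q/\beta_i$).

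For the implication ``$\dis(Q)$ nilpotent $\Rightarrow$ $Q$ nilpotent'', I would start from the upper central series $1=Z_0\le Z_1\le\cdots\le Z_n=\dis(Q)$. Each $Z_i$ is characteristic in $\dis(Q)$, and since $\dis(Q)\trianglelefteq\lmlt(Q)$, it is normal in $\lmlt(Q)$; hence each $Z_i$ yields a congruence $\c{Z_i}$. These assemble into a chain $\lambda_Q=\c{Z_0}\le\c{Z_1}\le\cdots\le\c{Z_n}=1_Q$, using $\c{1}=\lambda_Q$ and $\c{\dis(Q)}=1_Q$ (every $L_aL_b^{-1}$ lies in $\dis(Q)$). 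The bulk of the work is to verify step by step that $\c{Z_{i+1}}$ is central over $\c{Z_i}$: by Proposition \ref{central congruences} this splits into the commutator inclusion, which follows from the group-level relation $[Z_{i+1},\dis(Q)]\le Z_i$ once the relative displacement groups $\dis_{\c{Z_i}}$ are related back to the $Z_i$ as in \cite{CP}, and the stabilizer condition, which must be checked separately. Finally I would prepend the step $0_Q\le\lambda_Q$: here $\dis_{\lambda_Q}=\langle L_aL_b^{-1}:L_a=L_b\rangle=1\le Z(\dis(Q))$, so Proposition \ref{central congruences} reduces the centrality of $\lambda_Q$ to the single inclusion $\lambda_Q\le\sigma_Q$. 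Adjoining this bottom layer produces a central chain of length $n+1$, giving the asserted bound.

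For the converse, given a central chain $0_Q=\beta_0\le\cdots\le\beta_k=1_Q$ of $Q$, I would apply $\alpha\mapsto\dis_\alpha$ to obtain a chain of subgroups of $\dis(Q)$ and run the same commutator correspondence in the opposite direction: the centrality of $\beta_{i+1}$ over $\beta_i$ forces $[\dis_{\beta_{i+1}},\dis(Q)]\subseteq\dis_{\beta_i}$, so the subgroups $\dis_{\beta_i}$ form a central chain witnessing nilpotence of $\dis(Q)$. No extra layer and no length estimate are needed in this direction, matching the statement.

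The main obstacle is the stabilizer side-condition ``$\alpha\le\sigma_Q$'' of Proposition \ref{central congruences}, which is precisely the part of centrality that does \emph{not} pass transparently through the displacement dictionary: the data $\dis_\alpha$ governs the commutator inclusion cleanly, whereas the condition on point stabilizers has to be tracked by hand at every level. The crux is the bottom step, establishing $\lambda_Q\le\sigma_Q$ so that $\lambda_Q$ is genuinely a central congruence; this is exactly the phenomenon responsible for the $+1$ discrepancy between the nilpotency length of $Q$ and that of $\dis(Q)$, reflecting that the innermost permutation-rack layer of $Q$ collapses onto the trivial subgroup of $\dis(Q)$.
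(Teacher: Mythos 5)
The paper does not actually prove this proposition --- it is imported verbatim from \cite[Lemma 6.2]{CP} --- so I am comparing your argument with the proof given there. Your converse direction is essentially sound in outline: from a central series of congruences one gets $[\dis_{\beta_{i+1}},\dis(Q)]\le\dis_{\beta_i}$ via the relative form of the commutator correspondence in \cite{CP}, and the subgroups $\dis_{\beta_i}$ (which are normal in $\lmlt(Q)$, start at $\dis_{0_Q}=1$ and end at $\dis_{1_Q}=\dis(Q)$) form a central series of the same length. Just note that Proposition \ref{central congruences} as quoted here only gives the absolute version (centrality over $0_Q$), so you do need the relative statement from \cite{CP}, not merely the one reproduced in this paper.

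The forward direction, however, has a genuine gap, and it sits exactly where you flagged it. Your chain $0_Q\le\c{Z_0}=\lambda_Q\le\c{Z_1}\le\cdots\le\c{Z_n}=1_Q$ requires, at the bottom, that $\lambda_Q$ be a central congruence, i.e.\ that $\lambda_Q\le\sigma_Q$; you defer this and never prove it, and it is \emph{false} in general: the example following Proposition \ref{strongly abelian -> central} produces congruences $\alpha\le\lambda_Q$ with $\dis_\alpha=1$ that are not central (whenever $H_1\not\trianglelefteq H_2$), which by Proposition \ref{central congruences} means precisely that $\alpha\not\le\sigma_Q$ and hence $\lambda_Q\not\le\sigma_Q$. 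Nothing in the nilpotency of $\dis(Q)$ obviously rescues this, and the same stabilizer condition recurs, unverified, at every higher step of your $\c{Z_i}$-chain, because the relation $L_aL_b^{-1}\in Z_{i+1}$ supplies no group element carrying $a$ to $b$ with which to conjugate stabilizers. The proof in \cite{CP} avoids this by using the \emph{orbit} congruences $\mathcal{O}_{Z_i}$ of the upper central series instead of $\c{Z_i}$: if $b=h(a)$ with $h\in Z_{i+1}$, then $L_bL_a^{-1}=[h,L_a]$ lands in $Z_{i+1}$ with $[Z_{i+1},\dis(Q)]\le Z_i\le\dis^{\mathcal{O}_{Z_i}}$, \emph{and} $\dis(Q)_b=h\dis(Q)_ah^{-1}$ equals $\dis(Q)_a$ modulo the previous level because $h$ is central there --- so both halves of Proposition \ref{central congruences} come for free. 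The extra ``$+1$'' step then appears at the \emph{top}, from $\mathcal{O}_{Z_n}=\mathcal{O}_{\dis(Q)}$ to $1_Q$, whose quotient is a permutation rack and hence nilpotent of length one; it does not come from adjoining $\lambda_Q$ at the bottom as you propose.
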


We start with an observation regarding nilpotence of racks constructed by repeating covering extensions.
In general, strongly solvable algebraic structures are not necessarily nilpotent. In racks, the situation is different. 

\begin{theorem}\label{nilpotency of lmlt and strongly solvability}
Every rack which is strongly solvable of length $n$ is also nilpotent of length at most $n$. %\textcolor{blue}{why $n$ and not $n+1$? Because $\dis(Q)$ is in the derived subgroup of $\lmlt(Q)$?}
\end{theorem}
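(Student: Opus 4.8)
<br>

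The plan is to prove that a rack $Q$ which is strongly solvable of length $n$ has a nilpotent displacement group, and then invoke Proposition \ref{nilpotency} to conclude nilpotence of $Q$. By Theorem \ref{thm:strongly_solvable iff reductive}, strong solvability of length at most $n$ is equivalent to $n$-reductivity, so I would work with the reductive law, which asserts that the composition of any $n$ right translations is constant. The key object to understand is $\dis(Q)$, and the goal is to show it is nilpotent of length controlled by $n$; then $Q$ is nilpotent of length at most $n+1$ by Proposition \ref{nilpotency}, but in fact the statement claims length at most $n$, so the induction must be set up carefully to avoid losing a step.

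First I would set up induction on $n$, using the factor $Q/\lambda_Q$. By Lemma \ref{l:reductive}, if $Q$ is $n$-reductive then $Q/\lambda_Q$ is $(n-1)$-reductive, hence by the inductive hypothesis $Q/\lambda_Q$ is nilpotent of length at most $n-1$, so $\dis(Q/\lambda_Q)$ is nilpotent of length at most $n-1$ by Proposition \ref{nilpotency}. The map $\pi_{\lambda_Q}$ sends $\dis(Q)$ onto $\dis(Q/\lambda_Q)$ with kernel $\dis^{\lambda_Q}$. By Corollary \ref{prop:covering_central}, $\lmlt^{\lambda_Q}=Z(\lmlt(Q))$, so $\dis^{\lambda_Q}=\dis(Q)\cap Z(\lmlt(Q))$ is central in $\lmlt(Q)$, and in particular central in $\dis(Q)$. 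Thus $\dis(Q)$ is a central extension of a group nilpotent of length at most $n-1$, which makes $\dis(Q)$ nilpotent of length at most $n$. This immediately gives nilpotence of $Q$, and the delicate point is to extract the sharper length bound of $n$ rather than $n+1$.

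The main obstacle, and the place where I expect the real work to lie, is the base case and the tightening of the length count. The crude bound from Proposition \ref{nilpotency} only yields length $n+1$, so I would need to show that the reductive structure forces the extra saving. Concretely, I would argue that $\lambda_Q$ is itself a central congruence whose relative displacement group $\dis_{\lambda_Q}$ is not merely abelian but sits inside the center of $\dis(Q)$, and simultaneously that the chain of retracts produces a central series of $\dis(Q)$ of length exactly matching the reductive length, rather than one longer. The key computation is to verify that $\dis^{\lambda_Q}$ being central in $\lmlt(Q)$ (hence in $\dis(Q)$) lets the central series of $\dis(Q/\lambda_Q)$ lift to a central series of $\dis(Q)$ with one extra central term at the bottom, but that for a $1$-reductive (permutation) rack the displacement group is already trivial, so the induction starts at length $0$ rather than $1$. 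This base-case bookkeeping is what saves the final step and yields length at most $n$ for $Q$.
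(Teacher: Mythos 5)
Your route is genuinely different from the paper's: the paper reduces to $n$-multipermutationality via Theorem \ref{thm:strongly_solvable iff reductive} and then cites an external result that $\lmlt(Q)$ of an $n$-multipermutational rack is nilpotent of length at most $n-1$, whereas you try to re-derive that fact by an induction along the retract $Q\to Q/\lambda_Q$ using Lemma \ref{l:reductive} and Corollary \ref{prop:covering_central}. The mechanism you identify is the right one: $\pi_{\lambda_Q}$ maps $\dis(Q)$ onto $\dis(Q/\lambda_Q)$ with kernel $\dis^{\lambda_Q}=\dis(Q)\cap Z(\lmlt(Q))\leq Z(\dis(Q))$, so $\dis(Q)$ is a central extension of $\dis(Q/\lambda_Q)$, and this is a legitimate, self-contained alternative to the citation.

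As written, however, the induction does not close. In your second paragraph the inductive hypothesis tracks the nilpotence length of the \emph{rack} $Q/\lambda_Q$, and you pass from ``$Q/\lambda_Q$ is nilpotent of length at most $n-1$'' to ``$\dis(Q/\lambda_Q)$ is nilpotent of length at most $n-1$'' by Proposition \ref{nilpotency}; but that proposition gives a quantitative bound only in the direction group $\to$ rack, not rack $\to$ group, so this step is unjustified --- and even granting it, you only reach length $n+1$ for $Q$. The statement to induct on is the group-level one: an $n$-reductive rack has $\dis(Q)$ nilpotent of class at most $n-1$ (base case: a permutation rack has $\dis(Q)=1$, class $0$; inductive step: the central extension above adds one term). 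Your third paragraph sketches exactly this, so the idea is present, but it is left as a plan rather than carried out, and it is entangled with the false assertion that $\lambda_Q$ is a central congruence --- by Propositions \ref{central congruences} and \ref{central iff} centrality additionally requires $\lambda_Q\leq\sigma_Q$, which fails in general, and the paper stresses that strongly abelian congruences need not be central. Fortunately that claim is not needed: only $\dis^{\lambda_Q}\leq Z(\dis(Q))$ is used. Once the induction is restated on the class of $\dis(Q)$, a single final application of Proposition \ref{nilpotency} gives $Q$ nilpotent of length at most $n$, as required.
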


\begin{proof}
Let $Q$ be a strongly solvable rack of length $n$. If $n=1$ then $Q$ is a permutation rack, which is nilpotent of length 1. Let $n\geq 2$. Then $Q$ is $n$-multipermutational by Theorem \ref{thm:strongly_solvable iff reductive}, and according to \cite[Main Theorem]{MultRackSol}, the group $\lmlt(Q)$ is nilpotent of length at most $n-1$. Then $\dis(Q)$ is also nilpotent of length at most $n-1$, and thus $Q$ is nilpotent of length at most $n$ by Proposition \ref{nilpotency}.
\end{proof}
%
%(ii) If $\lmlt(Q)$ is abelian then $Q$ is actually medial and $2$-reductive and then strongly solvable of length $2$ \cite[Lemma 3.1]{JPZDMulti}. Let $\lmlt(Q)$ be nilpotent of length $n+1$. Let $N=\gamma_n(\lmlt(Q))\leq Z(\lmlt(Q))$ and $\alpha=\mathcal{O}_N$. Then $\lmlt(Q/\alpha)$ is nilpotent of length $n$, since $N\leq \lmlt^\alpha$ and so by induction $Q/\alpha$ is strongly solvable of length $n+1$. The congruence $\alpha$ is strongly abelian, indeed $L_{h(a)}=h^{-1} L_a h =L_a$ for every $a\in Q$. Therefore $Q$ is strongly solvable of length $n+2$.  

The bound on the length of nilpotence given in Theorem \ref{nilpotency of lmlt and strongly solvability} is not optimal: every medial $n$-reductive quandle is nilpotent of length $2$ \cite[Proposition 5.13]{CP}. 

For congruences of racks, the situation is different: strongly abelian congruences are not necessarily central. To characterize rack congruences which are central and strongly abelian at the same time, look at Proposition \ref{central congruences} under the assumption that $\alpha$ is strongly abelian, i.e., $\dis_\alpha=1$: then $\alpha$ is central if and only if $\alpha\leq \sigma_Q$. For connected quandles, we have a nicer characterization.

\begin{proposition}\label{central iff}
Let $Q$ be a connected quandle and $\alpha\leq\lambda_Q$. The following conditions are equivalent:
%n $\alpha$ is central if and only if $\dis(Q)_a\unlhd \dis(Q)_{[a]_\alpha}$ for every $a\in Q$.
\begin{itemize}
\item[(i)] $\alpha$ is central.
\item[(ii)] $\alpha\leq \sigma_Q$.
\item[(iii)] $\dis(Q)_a\unlhd \dis(Q)_{[a]_\alpha}$ for every $a\in Q$.
\end{itemize}
\end{proposition}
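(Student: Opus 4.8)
The plan is to prove the cyclic chain of implications $(i)\Rightarrow(ii)\Rightarrow(iii)\Rightarrow(i)$, exploiting the hypothesis $\alpha\leq\lambda_Q$ throughout (so that $\dis_\alpha=1$ and $\alpha$ is strongly abelian by Proposition \ref{l:strongly_abelian iff under_lambda}). The equivalence $(i)\Leftrightarrow(ii)$ should be essentially free: by Proposition \ref{central congruences}, $\alpha$ is central iff $\dis_\alpha$ lies in the center of $\dis(Q)$ and $\alpha\leq\sigma_Q$; but since $\alpha\leq\lambda_Q$ forces $\dis_\alpha=1$, the first condition is automatic, leaving exactly $\alpha\leq\sigma_Q$. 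So the real content is packaging the definition of $\sigma_Q$ (namely $a\,\sigma_Q\,b\Leftrightarrow\dis(Q)_a=\dis(Q)_b$) into the normality statement (iii).

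For the remaining equivalence $(ii)\Leftrightarrow(iii)$, the key observation I would use is that, since $\alpha\leq\lambda_Q$, the block stabilizer $\dis(Q)_{[a]_\alpha}$ consists precisely of those displacements $h$ with $h(a)\,\alpha\,a$, and hence (as $\alpha\leq\lambda_Q$) with $L_{h(a)}=L_a$. First I would show that $\dis(Q)_{[a]_\alpha}$ normalizes $\dis(Q)_a$ under the assumption $\alpha\leq\sigma_Q$. Take $h\in\dis(Q)_{[a]_\alpha}$ and $g\in\dis(Q)_a$; I want $hgh^{-1}\in\dis(Q)_a$, i.e. $hgh^{-1}(a)=a$. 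Since $h(a)\,\alpha\,a$ and $\alpha\leq\sigma_Q$, we get $\dis(Q)_{h(a)}=\dis(Q)_a$. Now $h g h^{-1}$ fixes $h(a)$ (because $g$ fixes $a$), so $hgh^{-1}\in\dis(Q)_{h(a)}=\dis(Q)_a$, which is exactly normality. This gives $(ii)\Rightarrow(iii)$.

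For $(iii)\Rightarrow(ii)$, I would argue the contrapositive or argue directly that normality forces the stabilizers along an $\alpha$-block to coincide. Suppose $a\,\alpha\,b$; I want $\dis(Q)_a=\dis(Q)_b$. Using connectedness of $Q$, pick $h\in\dis(Q)$ with $h(a)=b$; then $b\,\alpha\,a$ means $h\in\dis(Q)_{[a]_\alpha}$, and conjugation by $h$ sends $\dis(Q)_a$ onto $\dis(Q)_{h(a)}=\dis(Q)_b$. The normality hypothesis (iii) says $h\dis(Q)_a h^{-1}=\dis(Q)_a$ (since $h\in\dis(Q)_{[a]_\alpha}$ and $\dis(Q)_a\unlhd\dis(Q)_{[a]_\alpha}$), so $\dis(Q)_b=h\dis(Q)_a h^{-1}=\dis(Q)_a$, which is $a\,\sigma_Q\,b$. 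As $a,b$ were arbitrary $\alpha$-related points, $\alpha\leq\sigma_Q$.

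The main obstacle I anticipate is the careful justification that every element of $\dis(Q)_{[a]_\alpha}$ can be realized as a conjugator moving $a$ within its block, and conversely that such conjugators land in the block stabilizer; this is where connectedness and the transitivity criterion of Proposition \ref{prop:connected ext} (giving that $\dis(Q)_{[a]_\alpha}$ acts transitively on $[a]_\alpha$) do the work, and one must be attentive to the distinction between the point stabilizer $\dis(Q)_a$ and the block stabilizer $\dis(Q)_{[a]_\alpha}$. The identification $\dis(Q)_{h(a)}=h\dis(Q)_a h^{-1}$ is the standard conjugation-of-stabilizers fact and should be stated explicitly but needs no real proof.
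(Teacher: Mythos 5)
Your proposal is correct and follows essentially the same route as the paper: (i)$\Leftrightarrow$(ii) falls out of Proposition \ref{central congruences} because $\alpha\leq\lambda_Q$ forces $\dis_\alpha=1$, and (ii)$\Leftrightarrow$(iii) is proved by conjugating point stabilizers by elements of the block stabilizer, using $\dis(Q)_{h(a)}=h\,\dis(Q)_a\,h^{-1}$ and the fact that any $h\in\dis(Q)$ moving $a$ within its $\alpha$-block lies in $\dis(Q)_{[a]_\alpha}$. The only cosmetic difference is that for (iii)$\Rightarrow$(ii) you invoke transitivity of $\dis(Q)$ on $Q$ and then observe the conjugator lands in the block stabilizer, whereas the paper cites Proposition \ref{prop:connected ext} for transitivity of the block stabilizer on the block; both are valid.
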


\begin{proof}
%(i) $\Leftrightarrow$ (ii) The relative displacement group $\dis_\alpha$ is trivial and then central. Therefore $\alpha$ is central if and only if $\dis(Q)$ is $\alpha$-semiregular. Then we can apply \cite[Lemma 5.8]{CP}. 
%FORMER PROP.5.5: The relative displacement group is trivial. Applying Proposition \ref{central congruences} we have that $\alpha$ is central if and only if $\dis(Q)$ is $\alpha$-semiregular, i.e. $\dis(Q)_a=\dis(Q)_b$ whenever $a\,\alpha\, b$, see Proposition \ref{central congruences}. If $h(a)=a$, then $h(b)\,\alpha\, a$ for every $b\, \alpha\, a$, i.e. $h\in \dis(Q)_{[a]_\alpha}$. So $\dis(Q)$ is $\alpha$-semiregular if and only if $\dis(Q)_{[a]_\alpha}$ is semiregular on $[a]_\alpha$.
%
%If $Q$ is connected, the stabilizers and the block stabilizers are conjugate by the same element of $\dis(Q)$, i.e.
%$$\dis(Q)_{[g(a)]}=g\dis(Q)_{[a]} g^{-1},\qquad \dis(Q)_{g(a)}=g\dis(Q)_{a} g^{-1}.$$
%for every $a\in Q$ and every $g\in \dis(Q)$. Therefore it is enough to consider a particular $a\in Q$. 
%\red{According to Proposition \ref{prop:connected ext} $[a]_\alpha=a^{\dis(Q)_{[a]_\alpha}}$ and $\dis(Q)_{h(a)}=h\dis(Q)_a h^{-1}$ for every $h\in \dis(Q)$. Hence $\dis(Q)_{h(a)}=\dis(Q)_a$ for every $h\in \dis(Q)_{[a]_\alpha}$ if and only if $\dis(Q)_a$ is normal in $\dis(Q)_{[a]_\alpha}$.}
%\red{--- I don't understand, PLEASE REWRITE. Actually, according to my calculations, it is incorrect, it seems that you mix point stabilizers and block stabilizers. Please check and provide details.}
According to Proposition \ref{prop:connected ext} the block $[a]_\alpha$ coincides with the orbit of $a$ with respect to the action of the block-stabilizer. Therefore for every $b\, \alpha\,a $ there exists $h\in \dis(Q)_{[a]}$ such that $b=h(a)$.

(ii) $\Rightarrow$ (iii) If $h\in \dis(Q)_{[a]}$ then $h(a)\, \alpha\, a$ and so $\dis(Q)_{h(a)}=h \dis(Q)_a h^{-1}=\dis(Q)_a$ and so $h\in N_{\dis(Q)}(\dis(Q)_a)$.

(iii) $\Rightarrow$ (ii) Since $\dis(Q)_a\unlhd \dis(Q)_{[a]}$ then for every $b\, \alpha\, a$ we have $\dis(Q)_b=\dis(Q)_{h(a)}=h \dis(Q)_a h^{-1}=\dis(Q)_a$, hence $\alpha\leq \sigma_Q$.
\end{proof}

In certain classes of racks, strongly abelian congruences are central. For example, whenever $\dis(Q)$ is semiregular, which happens, for example, for every principal quandle.

\begin{proposition}\label{strongly abelian -> central}
%	Let $Q$ be a semiregular rack and $\alpha\leq \lambda_Q$. Then $\alpha$ and $\c{\dis^{\alpha}}$ are central congruences. 
Let $Q$ be a rack such that $\dis(Q)$ is semiregular, and let $\alpha\leq\lambda_Q$. Then $\alpha$ and $\c{\dis^{\alpha}}$ are central congruences. 
\end{proposition}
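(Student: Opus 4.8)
The plan is to exploit the characterization of central congruences given in Proposition \ref{central congruences}: a congruence $\alpha$ is central precisely when $\dis_\alpha$ lies in the center of $\dis(Q)$ and $\alpha\leq\sigma_Q$. Since we are assuming $\alpha\leq\lambda_Q$, Proposition \ref{l:strongly_abelian iff under_lambda} tells us $\alpha$ is strongly abelian, which forces $\dis_\alpha=1$. Thus the first of the two conditions is automatic, and the whole burden reduces to showing $\alpha\leq\sigma_Q$, i.e.\ that $a\,\alpha\,b$ implies $\dis(Q)_a=\dis(Q)_b$.

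First I would handle $\alpha$ itself. The key leverage is semiregularity of $\dis(Q)$: by definition this means every point stabilizer $\dis(Q)_a$ is trivial. Consequently $\dis(Q)_a=1=\dis(Q)_b$ for all $a,b$, so $\sigma_Q=1_Q$ is the all-relation, and therefore \emph{every} congruence is below $\sigma_Q$; in particular $\alpha\leq\sigma_Q$. Combined with $\dis_\alpha=1\leq Z(\dis(Q))$, Proposition \ref{central congruences} yields that $\alpha$ is central. This part is essentially immediate once semiregularity is unpacked.

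The work lies in the congruence $\c{\dis^\alpha}$, defined by $a\mathrel{\c{\dis^\alpha}}b\Leftrightarrow L_aL_b^{-1}\in\dis^\alpha$. I would first argue that $\c{\dis^\alpha}\leq\lambda_Q$ as well, so that the same mechanism applies: if $L_aL_b^{-1}\in\dis^\alpha$, I want to conclude $L_a=L_b$. Here $\dis^\alpha=\Ker(\pi_\alpha|_{\dis})=\setof{h\in\dis(Q)}{h(c)\,\alpha\,c\ \text{for all }c}$. The natural route is to combine the containment $\alpha\leq\lambda_Q$ with semiregularity: an element of $\dis^\alpha$ moves every point only within its $\lambda_Q$-class, hence fixes every $L$-value, and in a semiregular setting a displacement that acts "trivially enough" should be forced to be the identity on translations. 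Once $\c{\dis^\alpha}\leq\lambda_Q$ is established, strong abelianness of $\c{\dis^\alpha}$ follows from Proposition \ref{l:strongly_abelian iff under_lambda}, giving $\dis_{\c{\dis^\alpha}}=1$, and centrality again reduces to $\c{\dis^\alpha}\leq\sigma_Q$, which is free since $\sigma_Q=1_Q$ under semiregularity.

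The main obstacle I anticipate is verifying cleanly that $\c{\dis^\alpha}\leq\lambda_Q$, i.e.\ that membership of $L_aL_b^{-1}$ in $\dis^\alpha$ genuinely forces $L_a=L_b$ rather than merely $a\,\alpha\,b$; the delicate point is translating the orbit-level condition defining $\dis^\alpha$ into an equality of translations using only semiregularity and $\alpha\leq\lambda_Q$. Once that inclusion is in hand, everything else is a direct appeal to the two cited propositions, since semiregularity trivializes $\sigma_Q$. I would therefore concentrate the technical effort there and treat the centrality conclusions as formal consequences.
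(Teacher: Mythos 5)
Your treatment of $\alpha$ itself is fine and matches the paper's strategy: semiregularity gives $\sigma_Q=1_Q$, so by Proposition \ref{central congruences} centrality reduces to $\dis_\alpha\leq Z(\dis(Q))$, and $\alpha\leq\lambda_Q$ indeed forces $\dis_\alpha=1$.

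The second half, however, rests on a claim that is false in general: $\c{\dis^\alpha}\leq\lambda_Q$ does \emph{not} follow from the hypotheses, and no amount of ``technical effort'' will establish it. Concretely, take $Q=\aff(\Z_4,-1)$ and $\alpha=\lambda_Q$. Here $\dis(Q)=\{\mathrm{id},\,x\mapsto x+2\}$ acts semiregularly, $\lambda_Q$ has blocks $\{0,2\}$ and $\{1,3\}$, and every element of $\dis(Q)$ preserves these blocks, so $\dis^\alpha=\dis(Q)$. Since $L_aL_b^{-1}\in\dis(Q)=\dis^\alpha$ for \emph{all} $a,b$, the congruence $\c{\dis^\alpha}$ is the all-relation $1_Q$, which is strictly larger than $\lambda_Q$ (as $L_0\neq L_1$). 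So membership of $L_aL_b^{-1}$ in $\dis^\alpha$ does not force $L_a=L_b$, and $\dis_{\c{\dis^\alpha}}$ need not be trivial. The aim of reducing everything to $\dis_{\c{\dis^\alpha}}=1$ is therefore the wrong target. What the paper does instead is bound $\dis_{\c{\dis^\alpha}}$ by a central subgroup rather than by the trivial one: by definition of $\c{N}$ one has $\dis_{\c{\dis^\alpha}}\leq\dis^\alpha$, and since $\alpha\leq\lambda_Q$,
\begin{equation*}
\dis_{\c{\dis^\alpha}}\ \leq\ \dis^\alpha\ \leq\ \dis^{\lambda_Q}\ =\ Z(\lmlt(Q))\cap\dis(Q)\ \leq\ Z(\dis(Q)),
\end{equation*}
using Corollary \ref{prop:covering_central} for the equality. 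Combined with $\sigma_Q=1_Q$, Proposition \ref{central congruences} then gives centrality of $\c{\dis^\alpha}$ (and of $\alpha$, since $\alpha\leq\c{\dis^\alpha}$) in one stroke. You should replace your second paragraph's strategy with this containment argument.
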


\begin{proof}
Since $\dis(Q)$ is semiregular, we have $\sigma_Q=1_Q$, and thus a congruence $\beta$ is central if and only if $\dis_\beta\leq Z(\dis(Q))$.
Since $\alpha\leq \c{\dis^\alpha}$, we have
\begin{equation*}
\dis_{\c{\dis^\alpha}}\leq \dis^\alpha\leq \dis^{\lambda_Q}= Z(\lmlt(Q))\cap \dis(Q)\leq Z(\dis(Q))
\end{equation*}
and so $\alpha$ and $\c{\dis^\alpha}$ are central.
\end{proof}
%
%\begin{proof}
%Since $\dis(Q)$ is semiregular, we have $\sigma_Q=1_Q$, and thus a congruence $\beta$ is central if and only if $\beta\leq\c{Z(\dis(Q))}$. 
%According to Corollary \ref{prop:covering_central} we have \[ \dis^\alpha\leq \dis^{\lambda_Q}= Z(\lmlt(Q))\cap \dis(Q)\leq Z(\dis(Q)). \] 
%Hence $\alpha\leq\c{\dis^{\alpha}}\leq\c{Z(\dis(Q))}$. 
%\end{proof}

The following example shows that factors of central congruences are not necessarily central.

\begin{example}
%	\red{Let $Q=\mathcal{Q}(\dis(Q),\dis(Q)_a,\widehat{L_a})$ formal nonsense!!! What is $Q_1$ ??? PLEASE REWRITE} 
%%We will use the construction from Lemma \ref{Prop:extension homogeneous}.
%\textcolor{blue}{I did. There might be some confusion because $H_i$ is both and element of the quandle $Q_i$ and a subgroup of $\dis(Q_i)=\dis(Q)$. In the notation I was using before I was labelling the quandles using the subgroups, i.e.  $Q_0-> Q_1$ $Q_{1/2}-> Q_{H_{1/2}}$. You can choose the one you prefer.}
%	Then $Q_{H_1}\cong Q_1/\ker{p_1}$, $Q_{H_2}\cong Q_1/\ker{p_2}$. 
Let $Q$ be a connected quandle, $a\in Q$, $1=H_0\leq H_1\leq H_2\leq \dis(Q)_a$ and $Q_i=\Q(\dis(Q),H_i,\widehat{L_a})$ for $i=0,1,2$. By Lemma \ref{Prop:extension homogeneous}, $Q_i$ is connected, $\dis(Q)=\dis(Q_i)$ and moreover if $i\leq j$ then 
	$$p_{i,j}:Q_i\longrightarrow Q_{j},\quad gH_i\mapsto gH_j$$ 
is a covering homomorphism. The stabilizer $\dis(Q_0)_1$ is trivial since $Q_0$ is principal and so it is normal in the block stabilizer of $\ker{p_{0,i}}$ for $i=1,2$. According to Proposition \ref{central iff}, $Q_0$ is a central cover of both $Q_{1}$ and $Q_{2}$, i.e. $\ker{p_{0,i}}$ is a central congruence for $i=1,2$. Let $Q_{2}\cong Q_{1}/\ker{p_{1,2}}$. The stabilizer in $\dis(Q_1)$ of the element $H_1\in Q_1$ is  the subgroup $H_1$ and the block stabilizer of $[H_1]_{\ker{p_{1,2}}}$ is $H_2$. Using Proposition \ref{central iff} again, we obtain that $Q_{1}$ is a central cover of $Q_{2}$ if and only if $H_1\unlhd H_2$. Thus, if $H_1$ is not normal in $H_2$, the congruence $\ker{p_{1,2}}=\ker{p_{0,2}}/\ker{p_{0,1}}$ is a factor congruence of a central congruence, but it is not central itself.
\end{example}

%
%\begin{lemma}
%Let $Q$ be a twisted crosset set and $\alpha$ be strongly abelian cogruence. Then $\alpha$ is central.
%\end{lemma}
%\begin{proof}
%Let $a\, \alpha\, b$ and $h\in \dis(Q)_a$. Then $a^{-1}b\in Fix(f)\cap Z(G)$ i.e. $b=az$ for some $z\in Z(G)$. Therefore
%\begin{displaymath}
%cbf(c)^{-1}=cazf(c)^{-1}=caf(c)^{-1}z=az=b.
%\end{displaymath}
%Hence $\dis(Q)$ is $\alpha$-semiregular.
%\end{proof}
%
%

%
%\comment{general criterion for abelianness of coverings}
%
%\begin{lemma}
%Let $Q=Q/\alpha\times_\theta A$ be an abelian covering of $Q/\alpha$. Then the action $\rho: A\longrightarrow \aut^\alpha$ defined by setting
%\begin{displaymath}
%\rho(h)(a,s)=(a,s+h)
%\end{displaymath}
%is an action by automorphism and $A$ is regular on each block of $\alpha$.
%\end{lemma}
%\begin{proof}
%Let $h\in A$. Then
%\begin{eqnarray*}
%\rho(h)((a,s)\ast (b,t))&=&\rho(h)((a,s)\ast (b,t))(a\ast b, t+\theta_{a,b})\\
%&=&(a\ast b, t+\theta_{a,b}+h)\\
%\rho(h)(a,s)\ast \rho(h)(b,t)&=&(a,s+h)\ast (b,t+h)=(a\ast b, t+\theta_{a,b}+h).
%\end{eqnarray*}
%\end{proof}
%
%
%
%Note that if $Q$ is abelian, the mapping
%$$(a,s)\mapsto (a,s+h_a)$$
%is a automorphisms if and only if $h((a,s)\ast (b,t))=(a\ast b,t+\theta_{a,b}+h_{[a\ast b]}=h(a,s)\ast h(b,t)=(a\ast b,t+\theta_{a,b}+h_{b})$ for every $a\in Q/\alpha$ and $s\in A$. Hence we have $h_{a\ast b}=h_{a}$. 
%$\aut^\alpha$ is regular over each block of $\alpha$ since it contains a $A^{Q/\pi(Q)}$. Since every $h|_{[a]}(a,s)=(a,s+t_a)$ and moreover $h_{b\ast a}=h_{a}$. Then we are done.
%
%\comment{If $Q$ is an abelian and connected quandle, then $\aut^\alpha=A$.}

\subsection{Central and strongly abelian is normal.}

In \cite{Montoli, Even}, Even et al. investigated the categorical concepts of \emph{central extensions} and \emph{normal extensions} with respect to the adjunction between the category of quandles and the category of projection quandles (their notion of central extension is different from the one that comes from commutator theory \cite{CP}). According to \cite[Theorem 2]{Even}, central extensions of quandles in the sense of \cite{Montoli, Even} are the same as quandle coverings. We will prove that normal extensions in the sense of \cite{Montoli, Even} are the same as central coverings in the sense of the previous subsection. We will use the following characterization of normal extensions.

\begin{lemma}\cite[Proposition 3.2]{Montoli}
Let $Q$ be a quandle. Then $Q$ is a normal extension of $Q/\alpha$ if and only if for all $a_i\, \alpha\, b_i$
\begin{equation}\label{normal ext}
L_{a_1}^{k_1}\ldots L_{a_n}^{k_n}(a_{n+1})=a_{n+1} \, \Longrightarrow \, L_{b_1}^{k_1}\ldots L_{b_n}^{k_n}(b_{n+1})=b_{n+1}.
\end{equation}
\end{lemma}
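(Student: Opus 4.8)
The plan is to read the statement directly off the categorical definition of normal extension relative to the reflection of quandles onto projection quandles. First I would record the relevant reflector. A homomorphism from a quandle into a projection quandle $P$ satisfies $f(x\ast y)=f(x)\ast f(y)=f(y)$, so it collapses every pair $(y,x\ast y)$; since $x\ast y=L_x(y)$ lies in the $\lmlt(Q)$-orbit of $y$, the universal such map is the projection onto the orbit quotient. Hence the reflector is $I(Q)=Q/\mathcal{O}_{\lmlt(Q)}$ (which is indeed a projection quandle, as $[x]\ast[y]=[x\ast y]=[y]$), with unit $\eta_Q\colon Q\to I(Q)$ the canonical map. Following Janelidze--Kelly, which is the setting of \cite{Montoli}, the projection $f\colon Q\to Q/\alpha$ is a \emph{normal extension} precisely when the first projection $\pi_1\colon R\to Q$ of its kernel pair $R=\{(x,y)\in Q\times Q:\ x\,\alpha\,y\}$ (a subquandle of $Q\times Q$ with componentwise operations) is a \emph{trivial extension}, i.e.\ when the naturality square of $\eta$ at $\pi_1$ is a pullback. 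Thus the whole task reduces to translating triviality of $\pi_1$ into \eqref{normal ext}.

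Next I would unwind triviality concretely. Triviality of $\pi_1$ means that the comparison homomorphism
\[ \phi\colon R \longrightarrow Q \times_{I(Q)} I(R), \qquad (x,y)\longmapsto \big(x,\, [(x,y)]_{\mathcal{O}_{\lmlt(R)}}\big) \]
is a bijection (a bijective quandle homomorphism is an isomorphism). The key structural observation is that $\lmlt(R)$ consists exactly of the pairs $\big(L_{x_1}^{k_1}\cdots L_{x_n}^{k_n},\, L_{y_1}^{k_1}\cdots L_{y_n}^{k_n}\big)$ with $x_i\,\alpha\,y_i$, acting coordinatewise. Consequently two elements $(x,y),(x,y')$ of $R$ sharing a first coordinate lie in the same $\lmlt(R)$-orbit if and only if there exist such data with $L_{x_1}^{k_1}\cdots L_{x_n}^{k_n}(x)=x$ and $L_{y_1}^{k_1}\cdots L_{y_n}^{k_n}(y)=y'$.

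From here the equivalence is immediate. Injectivity of $\phi$ says exactly that whenever $(x,y)$ and $(x,y')$ share a first coordinate and an $\mathcal{O}_{\lmlt(R)}$-class we must have $y=y'$; spelling this out with the orbit description above and relabelling $x_i=a_i$, $y_i=b_i$, $x=a_{n+1}$, $y=b_{n+1}$, this is verbatim the implication \eqref{normal ext}. Surjectivity of $\phi$, on the other hand, is automatic: given $(a,\omega)$ in the pullback with $\omega=[(x,y)]$ and $a$ in the $\lmlt(Q)$-orbit of $x$, write $a=g(x)$ with $g=L_{x_1}^{k_1}\cdots L_{x_n}^{k_n}$ and lift $g$ \emph{diagonally} to $\tilde g=(g,g)\in\lmlt(R)$, taking second coordinates equal to the first by reflexivity of $\alpha$; then $\tilde g(x,y)=(a,g(y))\in R$ since $x\,\alpha\,y$ forces $a=g(x)\,\alpha\,g(y)$, and $\phi(a,g(y))=(a,\omega)$. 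Therefore $\phi$ is bijective if and only if \eqref{normal ext} holds, which is the claim.

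The main obstacle I anticipate is entirely the bookkeeping of the first paragraph: correctly identifying the reflector as the orbit quotient and pinning down the precise categorical meaning of normal extension used in \cite{Montoli}, so that it coincides with ``the kernel-pair projection $\pi_1$ is a trivial extension.'' Once that definitional bridge is secured, the description of $\lmlt(R)$ and the diagonal-lifting argument for surjectivity are routine, and the injectivity condition matches \eqref{normal ext} on the nose.
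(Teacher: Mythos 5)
Your argument is correct, but note that there is nothing in the paper to compare it against: the paper does not prove this lemma at all, it imports it verbatim as \cite[Proposition 3.2]{Montoli}, and only uses it as a black box in the proof of Theorem \ref{thm normal ext}. What you have done is reconstruct a proof from the categorical definitions, which is essentially the route of the cited source (Even--Gran--Montoli work exactly in the Janelidze--Kelly framework you invoke). Your three key steps all check out: (a) the reflector onto projection quandles is the orbit quotient $Q\mapsto Q/\mathcal{O}_{\lmlt(Q)}$, since a congruence $\gamma$ with projection quotient must identify $y$ with $x\ast y$ and $x\ld y$, hence contains the orbit relation, which is itself a congruence with projection quotient; (b) $\lmlt(R)$ for the kernel pair $R\leq Q\times Q$ is generated by the pairs $(L_x,L_y)|_R$ with $x\,\alpha\,y$, so orbits in $R$ are described by pairs of parallel words, and injectivity of the comparison map $\phi$ is then literally \eqref{normal ext} (taking $a_{n+1}=x$, $b_{n+1}=y$ uses that $(x,y)\in R$, i.e.\ $a_{n+1}\,\alpha\,b_{n+1}$, exactly as the statement requires); (c) surjectivity of $\phi$ is automatic by the diagonal lift $(g,g)$, which lies in $\lmlt(R)$ by reflexivity of $\alpha$. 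Two small points you silently use and could make explicit: a bijective homomorphism of quandles is an isomorphism (true in any variety, so the pullback condition is indeed equivalent to bijectivity of $\phi$), and distinct pairs of words may induce the same permutation of $R$, which is harmless since only the orbit description matters. The one genuine hostage to fortune is the definitional bridge you yourself flag --- that ``normal extension'' in \cite{Montoli} means ``the kernel-pair projection $\pi_1$ is a trivial extension with respect to the $\pi_0$-adjunction''; that is indeed the definition used there, so your proof stands.
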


\begin{theorem}\label{thm normal ext}
Let $Q$ be a quandle. Then $Q$ is a normal extension of $Q/\alpha$ if and only if $\alpha$ is strongly abelian and central.
\end{theorem}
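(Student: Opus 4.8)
The plan is to prove both directions using the combinatorial characterization of normal extensions in the cited Lemma, namely the implication \eqref{normal ext}, together with the characterization of central congruences from Proposition \ref{central iff}. First I observe that the condition \eqref{normal ext} for all choices of exponents and indices is exactly the statement that block stabilizers behave coherently: if $L_{a_1}^{k_1}\ldots L_{a_n}^{k_n}$ fixes $a_{n+1}$, then replacing each $a_i$ by an $\alpha$-related $b_i$ preserves the fixed-point property. Since we are dealing with quandles, and the Cayley kernel $\lambda_Q$ is a congruence, the natural first step is to show that normality forces $\alpha\leq\lambda_Q$, which by Proposition \ref{l:strongly_abelian iff under_lambda} is equivalent to $\alpha$ being strongly abelian. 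This should follow by applying \eqref{normal ext} to the single translation $L_{a_1}$ with the term $t=x*y$: if $a\,\alpha\,b$, then for any $c$ the equation $L_a(a\ld(b*c))=b*c$ holds, and normality transfers fixed points in a way that yields $L_a=L_b$, hence $a\,\lambda_Q\,b$.

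Once I know $\alpha\leq\lambda_Q$, I can invoke Proposition \ref{central iff}, so that proving centrality reduces to proving $\alpha\leq\sigma_Q$, equivalently $\dis(Q)_a=\dis(Q)_b$ whenever $a\,\alpha\,b$. The forward direction (normal $\Rightarrow$ central) is then the statement that any $h=L_{a_1}^{k_1}\ldots L_{a_n}^{k_n}\in\dis(Q)$ fixing $a$ also fixes every $b\,\alpha\,a$. I would take such an $h\in\dis(Q)_a$, written as a product of translations, and use the fact that $\alpha$-related elements have equal translations (since $\alpha\leq\lambda_Q$) to replace the $a_i$ by $b_i$ without changing $h$ itself; then \eqref{normal ext} applied with $a_{n+1}=a$ and $b_{n+1}=b$ gives $h(b)=b$. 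This shows $\dis(Q)_a\subseteq\dis(Q)_b$, and by symmetry equality, so $\alpha\leq\sigma_Q$ and $\alpha$ is central.

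For the converse (strongly abelian and central $\Rightarrow$ normal), I assume $\alpha\leq\lambda_Q$ and $\alpha\leq\sigma_Q$ and must verify \eqref{normal ext}. Given $a_i\,\alpha\,b_i$ with $L_{a_1}^{k_1}\ldots L_{a_n}^{k_n}(a_{n+1})=a_{n+1}$, I use $\alpha\leq\lambda_Q$ to conclude $L_{a_i}=L_{b_i}$ for each $i=1,\dots,n$, so the two composite maps $h=L_{a_1}^{k_1}\ldots L_{a_n}^{k_n}$ and $L_{b_1}^{k_1}\ldots L_{b_n}^{k_n}$ are literally the same element of $\dis(Q)$. Thus $h(a_{n+1})=a_{n+1}$ means $h\in\dis(Q)_{a_{n+1}}$; since $a_{n+1}\,\alpha\,b_{n+1}$ and $\alpha\leq\sigma_Q$ gives $\dis(Q)_{a_{n+1}}=\dis(Q)_{b_{n+1}}$, we get $h\in\dis(Q)_{b_{n+1}}$, i.e.\ $h(b_{n+1})=b_{n+1}$, which is exactly \eqref{normal ext}.

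I expect the main obstacle to be the very first reduction, namely extracting $\alpha\leq\lambda_Q$ from the normality condition \eqref{normal ext}, because the cited Lemma quantifies over products of translations rather than over arbitrary terms, so I must be careful to package the equality $L_a=L_b$ in terms of fixed points of suitable translation words. Both the forward and backward arguments on centrality are then essentially bookkeeping once $\alpha\leq\lambda_Q$ is established, since the key point is simply that $\alpha$-related elements induce identical left translations, collapsing the two sides of \eqref{normal ext} to the same group element and reducing everything to the equality of point stabilizers guaranteed by $\alpha\leq\sigma_Q$.
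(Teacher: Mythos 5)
Your overall strategy is the paper's: reduce normality to $\alpha\leq\lambda_Q$ together with $\alpha\leq\sigma_Q$, using that $\alpha\leq\lambda_Q$ forces $\dis_\alpha=1$ so that centrality collapses to $\alpha\leq\sigma_Q$. Two remarks on the forward direction. The reduction you flag as the main obstacle does go through: apply \eqref{normal ext} to the trivially true equation $L_a^{-1}L_a(c)=c$ (i.e.\ $n=2$, $a_1=a_2=a$, $a_3=c$) and replace only the outer occurrence of $a$ by $b$, which yields $L_b^{-1}L_a(c)=c$ for every $c$, hence $L_a=L_b$; your formula ``$L_a(a\ld(b*c))=b*c$'' is a muddled rendering of this, but the idea is correct. (The paper instead quotes from Even--Montoli that normal extensions are categorical central extensions, hence coverings; your derivation is more self-contained.) Also, you should invoke Proposition \ref{central congruences} together with $\dis_\alpha=1$ rather than Proposition \ref{central iff}: the latter assumes $Q$ connected, which the theorem does not. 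The rest of your forward argument (transferring fixed points of elements of $\dis(Q)_a$ via \eqref{normal ext} to get $\dis(Q)_a=\dis(Q)_b$) is fine.

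The genuine gap is in the converse. You assert that $h=L_{a_1}^{k_1}\ldots L_{a_n}^{k_n}$ is ``the same element of $\dis(Q)$,'' but the exponents in \eqref{normal ext} are arbitrary, so $h$ is merely an element of $\lmlt(Q)$ and in general $h\notin\dis(Q)$ (e.g.\ $h=L_{a_1}$). Since $\sigma_Q$ is defined through stabilizers in $\dis(Q)$, the hypothesis $\alpha\leq\sigma_Q$ does not directly let you pass from $h(a_{n+1})=a_{n+1}$ to $h(b_{n+1})=b_{n+1}$. The repair is the one the paper uses: write $h=gL_{a_{n+1}}^{k}$ with $g\in\dis(Q)$, which is possible because $\dis(Q)\unlhd\lmlt(Q)$ and $L_c=(L_cL_{a_{n+1}}^{-1})L_{a_{n+1}}$ for every $c$. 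Then idempotency gives $L_{a_{n+1}}^k(a_{n+1})=a_{n+1}$, so $g\in\dis(Q)_{a_{n+1}}=\dis(Q)_{b_{n+1}}$ by $\alpha\leq\sigma_Q$; moreover $L_{a_{n+1}}(b_{n+1})=L_{b_{n+1}}(b_{n+1})=b_{n+1}$ because $\alpha\leq\lambda_Q$ and $Q$ is idempotent, whence $h(b_{n+1})=g(b_{n+1})=b_{n+1}$. Note that this is precisely where the quandle axiom enters; without it the step fails, so the decomposition cannot be skipped.
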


\begin{proof}
$(\Rightarrow)$ Every normal extension is central \cite{Montoli} and so $L_a=L_b$ whenever $a\,\alpha\,b$. So we can write the property \eqref{normal ext} as
	\begin{equation}\label{semiregularity}
	h=L_{a_1}^{k_1}\ldots L_{a_n}^{k_n}\in \lmlt(Q)_a \, \Rightarrow \, h|_{[a]_\alpha}=1.
	\end{equation}
	which is exactly $\alpha$-semiregularity of $\lmlt(Q)$. Therefore $\dis(Q)$ is $\alpha$-semiregular too, and so $\alpha$ is a central congruence.
	
	$(\Leftarrow)$ Assume that $\dis(Q)$ is $\alpha$-semiregular and let $h=gL_a^k\in \lmlt(Q)_a$ for some $k\in\mathbb{Z}$ and $g\in \dis(Q)_a$. If $a\,\alpha\, b$ we have that $h(b)=gL_a^k(b)=g(b)=b$, since $\dis(Q)$ is $\alpha$-semiregular. Therefore \eqref{semiregularity} holds and so does \eqref{normal ext}, i.e. $Q$ is a normal extension of $Q/\alpha$.
\end{proof}

\section{Abelian covering extensions}\label{sec:ab cov}

%\comment{It is clear that abeliannes of a covering is related to centrality of the corresponding congruence in the sense of \cite{comm}. We do not want to go into details here, but it is easy to check that if $E=Q\times A$ is an abelian covering, then the kernel of the projection $E\to Q$ is a central congruence in $E$. Nevertheless, the converse implication fails: there is a connected quandle $Q$ and its congruence $\alpha$ that is strongly abelian and central, yet $Q$ is not an abelian extension of $Q/\alpha$, see Example \ref{non-abelian-but-central}. }
%
%In the literature coverings given by abelian constant cocycles are called {\it abelian extensions}, see \cite{CSV,CS}. Since they are a particular class of coverings we prefer to call them {\it abelian coverings}. 

\emph{Abelian covering extensions} are special cases of two general constructions: \emph{covering extensions} as defined in Section \ref{cocycles}, and {\it central extensions} as defined in \cite[Section 7]{CP}. 
It follows from  \cite[Proposition 7.5]{CP} that, in an abelian covering extension $Q\times_\theta A$, the kernel of the canonical projection onto $Q$ is a central congruence.

We say that a rack $E$ is an \emph{abelian cover} of a rack $Q$ if $E$ is isomorphic to an abelian covering extension of $Q$. In this section, we characterize abelian covers. Universal algebra does not seem to provide a good concept, but there is a convenient characterization in terms of the structure of the automorphism group.

\begin{proposition}\label{abelian cov iff} 
Let $Q$ be a rack and $\alpha\leq \lambda_Q$. The following conditions are equivalent:
\begin{itemize}
\item[(i)] $Q$ is an abelian cover of $Q/\alpha$.
\item[(ii)] There exists an abelian subgroup $A\leq \aut(Q)$ such that $A_a=1$ and $[a]_\alpha=a^A$ for every $a\in Q$.
\end{itemize}
\end{proposition}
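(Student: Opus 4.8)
The statement is an equivalence, so I would prove the two implications separately, building throughout on the covering extension machinery of Section \ref{cocycles} and the characterization of central congruences. The key dictionary is that $\alpha \leq \lambda_Q$ means $Q$ is a cover of $Q/\alpha$, and an \emph{abelian} cover is one where the cocycle takes values in translations of an abelian group; via \eqref{standard cocycle} and \eqref{standard iso} this is an intrinsic property, so the task is to reconstruct the abelian group $A \leq \aut(Q)$ from the cocycle data and conversely.

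\medskip
\noindent $(i)\Rightarrow(ii)$. Assume $Q \cong (Q/\alpha)\times_\theta B$ for an abelian cocycle $\theta$ valued in an abelian group $(B,+)$. I would identify $Q$ with the set $(Q/\alpha)\times B$, so that blocks of $\alpha$ are the fibers $[x]\times B$, and translation by an element of $B$ acts on each fiber. The plan is to define, for each $c \in B$, the map $\tau_c$ that adds $c$ in the second coordinate: $\tau_c(x,a)=(x,a+c)$. Using Lemma \ref{l:Theta} (or the explicit formula \eqref{extensions by theta}) one checks that each $\tau_c$ is an automorphism of $Q$, precisely because the cocycle values are translations and translations of an abelian group commute with all translations. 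Then $A=\setof{\tau_c}{c\in B}$ is an abelian subgroup of $\aut(Q)$ isomorphic to $B$; the orbit of any point is its whole fiber, giving $a^A=[a]_\alpha$; and since $\tau_c$ fixes a point only when $c=0$, we get $A_a=1$. The main technical point here is verifying that $\tau_c$ really is an automorphism, which reduces to the commutativity of $B$ together with the fact that $\theta_{x,y}$ acts only on the second coordinate.

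\medskip
\noindent $(ii)\Rightarrow(i)$. Conversely, given an abelian $A\leq\aut(Q)$ with $A_a=1$ and $a^A=[a]_\alpha$, the hypotheses say $A$ acts regularly on each block of $\alpha$. I would fix a transversal, choosing one representative in each block and identifying that block with $A$ via the regular action: for a representative $x_0$ of the block $[x]$, the map $A \to [x]$, $g \mapsto g(x_0)$, is a bijection. Transporting through these bijections yields the data \eqref{standard cocycle} with values in $A$. The decisive step is to show the resulting cocycle $\theta$ is \emph{abelian}, i.e. that the permutations $\theta_{[x],[y]}$ are translations of the abelian group $A$. This follows because $A$ normalizes itself (being abelian) and, since $\alpha \leq \lambda_Q$, the left translations $L_x$ preserve the $A$-orbits and intertwine the regular $A$-actions on source and target fibers; concretely, the transition maps $h_{[xy]}L_x h_{[y]}^{-1}$ become right multiplications in $A$, hence translations. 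I would then invoke \eqref{standard iso} to conclude that $Q$ is isomorphic to the abelian covering extension $(Q/\alpha)\times_\theta A$.

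\medskip
\noindent \textbf{Main obstacle.} The crux of both directions is the identification of \emph{abelianness of the cocycle} with \emph{commutativity plus regularity of $A$}. In $(ii)\Rightarrow(i)$ this is the delicate part: one must verify that, after trivializing the fibers via the regular $A$-action, the cocycle values land in the (abelian) translation subgroup rather than in a larger piece of $\Sym(A)$. I expect this to hinge on the interplay between $\alpha\leq\lambda_Q$ (which guarantees the $L_x$ are compatible across the fibers) and the commutativity of $A$ (which forces the transition maps to be translations), and on choosing the transversal and bijections $h_{[x]}$ coherently so that the cocycle formula \eqref{standard cocycle} simplifies correctly.
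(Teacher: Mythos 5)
Your proposal is correct and follows essentially the same route as the paper: for (i)$\Rightarrow$(ii) the paper likewise defines the fiber-shifts $\rho_c(x,s)=(x,s+c)$ and checks they are automorphisms, and for (ii)$\Rightarrow$(i) it likewise trivializes each block via the regular $A$-action $s\mapsto s(e_B)$ and verifies that the transported transition maps $h_{[a*b]}^{-1}L_{e_{[a]}}h_{[b]}$ are right translations by an element of $A$, using exactly the interplay of $\alpha\leq\lambda_Q$ (so $L_{s(e)}=L_e$ for $s\in A$) with the fact that $s$ is an automorphism. The only detail worth making explicit when writing this up is that commutation step $sL_{e_{[a]}}=L_{e_{[a]}}s$, which is where $\alpha\leq\lambda_Q$ actually enters.
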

\begin{proof}
(i) $\Rightarrow$ (ii). 
Assume that $Q$ equals $Q/\alpha\times_{\theta} \tilde A$ for some abelian group $\tilde A$ and some cocycle $\theta$. 
For $a\in\tilde A$, we define a permutation $\rho_a$ of $Q$ by $\rho_a(x,s)=(x,s+a)$ and let $A=\{\rho_a:a\in\tilde A\}$. We see that $A$ is an abelian subgroup of $\aut(Q)$, since $\rho_a\rho_b=\rho_{a+b}$ and 
\begin{eqnarray*}
\rho_a((x,s)\ast (y,t))&=&\rho_a(x*y, t+\theta_{x,y})=(x*y, t+\theta_{x,y}+a)\\&=&(x,s+a)*(y,t+a)=\rho_a(x,s)*\rho_a(y,t)
\end{eqnarray*} 
for every $a,b\in \tilde A$ and $(x,s),(y,t)\in Q$.
Clearly we have $A_{(x,s)}=1$ and $[(x,s)]_\alpha=(x,s)^A$ for every $(x,s)\in Q$.

%(ii) $\Rightarrow$ (i) The blocks of $\alpha$ have all the same size, indeed $|[a]|=| A|$. Let $\setof{e_{[b]}}{[b]\in Q/\alpha	}$ be a set of representative of the blocks of $\alpha$. The mapping $h_{[b]}:[b]\longrightarrow A$ defined as $h_{[b]}(c)=s$ if and only if $s(e_{[b]})=c$ for every $c\in [b]$ is a well-defined bijection because of the properties of $A$. Let $\beta_{[a],[b]}(s)=h_{[a\ast b]}L_{e_{[a]}} h_{[b]}^{-1}(s)$ and $\theta([a],[b])=\beta_{[a],[b]}(0)$, i.e. $\theta_{[a],[b]}(e_{[a\ast b]})=e_{[a]}\ast e_{[b]}$. Since we have that
%$$s\theta_{[a],[b]}(e_{[a\ast b]})=s(e_{[a]}\ast e_{[b]})=e_{[a]}\ast s(e_{[b]}),$$
%because $s L_{e_{[a]}}=L_{e_{[a]}} s$, then $s \theta_{[a],[b]}=h_{[a\ast b]} (L_{e_{a}}h_{[b]}^{-1}(s))=\beta_{[a],[b]}(s)$. The group $A$ is abelian, so $Q\cong Q/\alpha\times_{\theta} A$.
%
%(ii) \red{NOVY DUKAZ} $\Rightarrow$ (i). Let $\setof{e_{[b]}}{[b]\in Q/\alpha	}$ be a set of representative of the blocks of $\alpha$. We can endow the blocks of $\alpha$ with an abelian group structure using the regular action of $A$ as explained in Remark \ref{group structure} taking $e_b$ as the unit of $[b]$. The mapping $h_{[b]}^{-1}: A\to [b]$ defined as $h_{[b]}^{-1}(s)=s(e_{[b]})$ is a well-defined group isomorphism. 

(ii) $\Rightarrow$ (i).  
We will find an abelian cocycle $\theta$ such that $Q$ is isomorphic to $Q/\alpha\times_\theta A$. 
Let $\setof{e_B}{B\in A/\alpha}$ be a set of representatives of the blocks of $\alpha$. The group $A$ is regular on each $\alpha$-block $B$ and so the map 
$$h_{B}:A\longrightarrow B,\quad s\mapsto s(e_B)$$
is a bijection. Let $\theta_{[a],[b]}$ be the unique element of $A$ such that $$\theta_{[a],[b]}(e_{[a*b]})=e_{[a]}*e_{[b]}.$$ 
We will show that the mapping $a\mapsto ([a], h_{[a]}^{-1}(a))$ is an isomorphism. We can apply the idea of Section \ref{cocycles}, cf. equations \eqref{standard cocycle} and \eqref{standard iso}, as long as we verify that $s\theta_{[a],[b]}=h_{[a*b]}^{-1}L_{e_{[a]}}h_{[b]}(s)$ for every $s\in A$. Indeed, using $s L_{e_{[a]}}=L_{e_{[a]}}s$ for every $s\in A$, we get 
\begin{eqnarray*}
h_{[a*b]}^{-1}L_{e_{[a]}}h_{[b]}(s) &=& h_{[a*b]}^{-1} (e_{[a]}*s(e_{[b]})) \\ &=& h_{[a*b]}^{-1} s(e_{[a]}*e_{[b]}) = h_{[a*b]}^{-1}(s\theta_{[a],[b]}(e_{[a*b]})) = s\theta_{[a],[b]}.\qquad\qquad\qedhere
\end{eqnarray*}
\end{proof}

%\begin{lemma}\label{preimage of stab}
%Let $Q$ be an abelian covering of $Q/\alpha$. Then the preimage of the stabilizer of $[a]$ in $\lmlt(Q/\alpha)$ restricted to the block $[a]$ (${\pi_\alpha}^{-1}(\lmlt(Q/\alpha)_{[a]})|_{[a]}$) is abelian.
%\end{lemma}
%\begin{proof}
%Let $h=L_{a_1}^{k_1}\ldots L_{a_n}^{k_n}\in \lmlt(Q)$, such that $\pi_\alpha(h)\in \lmlt(Q/\alpha)_{[a]}$. Then
%$$ h(a,s)=(a,s+\Theta_{t,a}),$$
%where $\Theta_{t,a}$ does not depend on $s$. Hence:
%$$gh(a,s)=(a,s+\Theta_{t,a}+\Theta_{g,a} )=hg(a,s), $$
%for every $h,g$ in the preimage of $\lmlt(Q/\alpha)_{[a]}$ with respect to $\pi_\alpha$.\comment{use the same notation as in the identity section. Problem: ordering} 
%\end{proof}
%

Proposition \ref{abelian cov iff} shows that connected abelian covers are exactly the {\it Galois covers} as defined in \cite[Definition 4.12]{Eisermann} for which the group of deck transformations is abelian.
 
As a consequence of Proposition \ref{abelian cov iff}, we prove the following adaptation of \cite[Proposition 3.1]{EG} to the context of coverings. (For quandles, the statement follows from \cite[Proposition 7.8]{CP}.)

\begin{corollary}\label{p:kerf*_trans}
Let $Q$ be a connected rack and $\alpha\leq \lambda_Q$ a congruence. Then $Q$ is an abelian cover of $Q/\mathcal{O}_{\lmlt^\alpha}$ and $\lmlt(Q/\mathcal{O}_{\lmlt^\alpha})\simeq\lmlt(Q/\alpha)$.
\end{corollary}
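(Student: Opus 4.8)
The plan is to apply Proposition \ref{abelian cov iff} to the congruence $\mathcal{O}_{\lmlt^\alpha}$ and then verify the two assertions separately. First I would observe that $\lmlt^\alpha$ is the kernel of the group homomorphism $\pi_\alpha:\lmlt(Q)\to\lmlt(Q/\alpha)$, hence a normal subgroup of $\lmlt(Q)$, and moreover that $\alpha\leq\lambda_Q$ forces $\lmlt^\alpha$ to be central in $\lmlt(Q)$ by Corollary \ref{prop:covering_central}. Being a central (in particular abelian) normal subgroup, $\lmlt^\alpha$ is a good candidate for the abelian subgroup $A$ required in Proposition \ref{abelian cov iff}(ii), provided it acts semiregularly. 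The orbit congruence $\mathcal{O}_{\lmlt^\alpha}$ has blocks $[a]=a^{\lmlt^\alpha}$ by definition, so the condition $[a]_{\mathcal{O}_{\lmlt^\alpha}}=a^A$ is automatic once we set $A=\lmlt^\alpha$; it remains to check $A_a=(\lmlt^\alpha)_a=1$ for every $a$.

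The semiregularity $(\lmlt^\alpha)_a=1$ should follow from connectedness together with centrality. Since $\lmlt^\alpha$ is central in $\lmlt(Q)$ and $Q$ is connected (so $\lmlt(Q)$ is transitive), a central subgroup of a transitive permutation group is semiregular — this is exactly the argument already used in the proof of Proposition \ref{inner identity preserved}. Thus each $a$ has trivial $\lmlt^\alpha$-stabilizer, and Proposition \ref{abelian cov iff} applies with $\beta=\mathcal{O}_{\lmlt^\alpha}$ in place of $\alpha$, yielding that $Q$ is an abelian cover of $Q/\mathcal{O}_{\lmlt^\alpha}$. I should first confirm $\mathcal{O}_{\lmlt^\alpha}\leq\lambda_Q$, which is needed to invoke the proposition; this holds because $\lmlt^\alpha$ is central, so its orbits lie inside $\lambda_Q$ (elements in the same $\lmlt^\alpha$-orbit differ by a central automorphism fixing all translations).

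For the isomorphism $\lmlt(Q/\mathcal{O}_{\lmlt^\alpha})\simeq\lmlt(Q/\alpha)$, the plan is to compare the two canonical projections. Writing $\beta=\mathcal{O}_{\lmlt^\alpha}$, I would note $\beta\leq\alpha$ (since $\mathcal{O}_N\leq\c{N}$ and, more directly, $\lmlt^\alpha$-orbits stay within $\alpha$-blocks because every element of $\lmlt^\alpha$ moves points only within $\alpha$-classes). This gives a natural surjection $Q/\beta\to Q/\alpha$ inducing a surjective group homomorphism $\lmlt(Q/\beta)\to\lmlt(Q/\alpha)$. To see it is injective, I would identify its kernel as $\lmlt^{\alpha/\beta}$ in $\lmlt(Q/\beta)$ and show this is trivial: an element of $\lmlt(Q/\beta)$ lying in this kernel is represented by some $h\in\lmlt(Q)$ with $h\in\lmlt^\alpha$, but then $h$ acts trivially on the $\beta=\mathcal{O}_{\lmlt^\alpha}$-blocks by the very definition of the orbit congruence, so its image in $\lmlt(Q/\beta)$ is the identity.

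The step I expect to be the main obstacle is the injectivity argument for the second assertion: one must carefully track how $\lmlt^\alpha$ sits inside $\lmlt(Q)$ versus how the relative kernel $\lmlt^{\alpha/\beta}$ sits inside $\lmlt(Q/\beta)$, and make sure the identification of the kernel with (the image of) $\lmlt^\alpha$ is correct under the homomorphism $\pi_\beta$. In particular I would need the commutative relationship $\pi_\alpha=\pi_{\alpha/\beta}\circ\pi_\beta$ between the three projection homomorphisms, and then argue that $\ker(\pi_{\alpha/\beta})=\pi_\beta(\lmlt^\alpha)$, which is trivial precisely because $\lmlt^\alpha$ acts within single $\beta$-blocks and hence collapses under $\pi_\beta$. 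The remaining verifications — that $\beta$ is a congruence, that $\beta\leq\alpha\leq\lambda_Q$, and that the homomorphisms restrict appropriately — are routine given the machinery already assembled in Sections \ref{sec:prelim} and \ref{sec:coverings}.
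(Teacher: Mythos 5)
Your proposal is correct and follows essentially the same route as the paper: both identify $\lmlt^\alpha$ as a central (hence abelian and, by transitivity of $\lmlt(Q)$, semiregular) subgroup whose orbits are the blocks of $\mathcal{O}_{\lmlt^\alpha}$, apply Proposition \ref{abelian cov iff}, and obtain the group isomorphism from the equality $\lmlt^\alpha=\lmlt^{\mathcal{O}_{\lmlt^\alpha}}$ (your kernel computation for $\pi_{\alpha/\beta}$ is just a rephrasing of this). The only cosmetic difference is that the paper gets $\beta\leq\lambda_Q$ directly from $\beta\leq\alpha\leq\lambda_Q$ rather than from centrality, and your aside invoking $\mathcal{O}_N\leq\c{N}$ is unnecessary.
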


The statement of the corollary can be represented by the following diagrams
\begin{equation*}
\xymatrixcolsep{63pt}\xymatrixrowsep{30pt}\xymatrix{ Q\ar[dr]^{}
\ar[r]^{} & Q/\mathcal{O}_{\lmlt^\alpha} \ar[d]^{} \quad \\ & Q/\alpha }  \qquad \xymatrixcolsep{63pt}\xymatrixrowsep{30pt}\xymatrix{ \lmlt(Q)\ar[dr]^{\pi_\alpha}
\ar[r]^{\pi_{\mathcal{O}_{\lmlt^\alpha}}} & \lmlt(Q/\mathcal{O}_{\lmlt^\alpha})\ar[d]^{\phi} \\ & \lmlt(Q/\alpha) }  
\end{equation*}
where $\phi$ is an isomorphism and $Q$ is an abelian cover of $Q/\mathcal{O}_{\lmlt^\alpha}$.

\begin{proof}
Denote $H=\lmlt^\alpha$ and $\beta=\mathcal{O}_H$.
The subgroup $H$ is contained in $Z(\lmlt(Q))\cap \aut^\alpha$. Since $H$ is contained in the center of a transitive group, it is abelian and semiregular. Its orbits coincide with the blocks of $\beta$, i.e. $a^H=[a]_\beta$ and $H_a=1$ for every $a\in Q$. Therefore we can apply Proposition \ref{abelian cov iff} and so $Q$ is an abelian cover of $Q/\beta$.  

Since $\beta\leq \alpha$ then $\lmlt^{\beta}\leq\lmlt^{\alpha}$. If $h\in \lmlt^\alpha$ then clearly $h(a)\, \beta\, a$ for every $a\in Q$, therefore $\lmlt^\alpha=\lmlt^\beta$. Thus, $\lmlt(Q/\alpha)\cong \lmlt(Q/\beta)$.
\end{proof}

For connected racks, we can characterize abelian covers in terms of properties of the congruence block stabilizers of the group $\lmlt(Q)$.

\begin{proposition}\label{character ab cov}
	Let $Q$ be a connected rack and $\alpha\leq \lambda_Q$. The following conditions are equivalent: 
	\begin{itemize}
		\item[(i)] $Q$ is an abelian cover of $Q/\alpha$.
		\item[(ii)] $(\lmlt(Q)_{[a]})|_{[a]}$ is abelian for every $a\in Q$.
	\end{itemize}
\end{proposition}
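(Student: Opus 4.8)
The plan is to use Proposition \ref{abelian cov iff} as the bridge: by that result, condition (i) is equivalent to the existence of an abelian subgroup $A\leq \aut(Q)$ with $A_a=1$ and $a^A=[a]_\alpha$ for every $a\in Q$. The entire argument rests on one elementary group-theoretic fact: an abelian transitive permutation group is regular, and a regular abelian group is its own centralizer in the ambient symmetric group. I will apply this to $G:=(\lmlt(Q)_{[a]})|_{[a]}$, the action of the block stabilizer on the block $[a]_\alpha$, which is transitive by Proposition \ref{prop:connected ext} since $Q$ is connected.

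For (i) $\Rightarrow$ (ii), I first observe that any $A$ as in Proposition \ref{abelian cov iff} automatically centralizes $\lmlt(Q)$: since $a^A=[a]_\alpha$ and $\alpha\leq\lambda_Q$, each $\rho\in A$ satisfies $\rho(x)\,\lambda_Q\,x$, whence $\rho L_x\rho^{-1}=L_{\rho(x)}=L_x$ for all $x$. The group $A$ preserves the block $[a]_\alpha$ setwise, and the restriction $A|_{[a]}$ is abelian and regular, hence self-centralizing in $\sym([a]_\alpha)$. As $A|_{[a]}$ centralizes $G$, we get $G\leq C_{\sym([a])}(A|_{[a]})=A|_{[a]}$, so $G$ is abelian, which is exactly (ii).

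For (ii) $\Rightarrow$ (i), condition (ii) says the transitive group $G$ is abelian, hence regular, so $C_{\sym([a])}(G)=G$. I will construct the required $A$ as a group of \emph{deck transformations}. The first step is to note that $\lmlt(Q)_a$ fixes $[a]_\alpha$ pointwise: its restriction to $[a]_\alpha$ lies in $G$ and fixes $a$, so it is trivial by regularity of $G$. Consequently, for each $g\in G$ the formula $\psi_g(w(a)):=w(g(a))$, with $w\in\lmlt(Q)$ (every point is of this form by connectedness), is well defined. One then checks that $\psi_g$ preserves every $\alpha$-block (because $g(a)\,\alpha\,a$), commutes with $\lmlt(Q)$ by construction, and --- crucially --- is a rack automorphism: from $\psi_g(x)\,\alpha\,x$ and $\alpha\leq\lambda_Q$ we get $L_{\psi_g(x)}=L_x$, so $\psi_g(x)*\psi_g(y)=x*\psi_g(y)=\psi_g(x*y)$. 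Since $g\mapsto\psi_g$ is an injective homomorphism (using that $G$ is abelian), $A:=\{\psi_g:g\in G\}$ is an abelian subgroup of $\aut(Q)$ with $\psi_g|_{[a]}=g$.

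Finally I will verify the hypotheses of Proposition \ref{abelian cov iff}: writing an arbitrary $x$ as $w(a)$, the orbit $x^A=w(\{g(a):g\in G\})=w([a]_\alpha)=[x]_\alpha$, because $w$ is an automorphism and thus permutes $\alpha$-blocks, while $A$ is semiregular since $\psi_g(x)=x$ forces $g(a)=a$, i.e. $g=1$. Proposition \ref{abelian cov iff} then yields (i). I expect the main obstacle to be the (ii) $\Rightarrow$ (i) construction, specifically the well-definedness of the deck transformations $\psi_g$ and the verification that they are genuine rack automorphisms; both points hinge on combining the regularity of $G$ (from abelianness plus connectedness) with the hypothesis $\alpha\leq\lambda_Q$.
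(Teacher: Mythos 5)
Your proof is correct, but it takes a genuinely different route from the paper's. For (i) $\Rightarrow$ (ii) the paper computes directly with the abelian cocycle: writing $Q=Q/\alpha\times_\theta A$, every $h\in\lmlt(Q)_{[a]}$ restricted to the block $\{a\}\times A$ acts as a translation $s\mapsto s+\Theta_{h,a}$ of the abelian group $A$, and translations commute. You instead pull the abelian group $A\leq\aut(Q)$ out of Proposition \ref{abelian cov iff}, note that it centralizes $\lmlt(Q)$ (since $\rho(x)\,\lambda_Q\,x$ forces $L_{\rho(x)}=L_x$), and use the standard fact that a regular abelian permutation group is self-centralizing to trap $(\lmlt(Q)_{[a]})|_{[a]}$ inside the abelian group $A|_{[a]}$. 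For (ii) $\Rightarrow$ (i) the paper builds an explicit abelian cocycle with values in $(\lmlt(Q)_{[e]})|_{[e]}$ via transversals $g_{[b]}$ and verifies the standard-cocycle identity \eqref{standard cocycle}, whereas you construct the deck transformations $\psi_g(w(a))=w(g(a))$ --- well defined because $\lmlt(Q)_a$ fixes $[a]_\alpha$ pointwise, which follows from regularity of the restricted block stabilizer --- and feed them back into Proposition \ref{abelian cov iff}. Your treatment is more modular: both implications are routed through Proposition \ref{abelian cov iff}, so the cocycle manipulation is done only once, inside that proposition, and the ``Galois cover'' picture alluded to after it becomes explicit. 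The paper's version is more self-contained and exhibits the cocycle concretely, with values in the restricted block stabilizer itself. The points you flag as delicate (well-definedness of $\psi_g$, the automorphism property via $\alpha\leq\lambda_Q$, and the fact that $g\mapsto\psi_g$ is a priori only an anti-homomorphism, repaired by abelianness of $G$) all go through as you describe.
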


From the proof, one can notice that the implication (i) $\Rightarrow$ (ii) holds for arbitrary quandles, not necessarily connected.

\begin{proof}
	(i) $\Rightarrow$ (ii) Assume that $Q$ equals $Q/\alpha\times_\theta A$ for some abelian group $A$ and some cocycle $\theta$. 
Consider $h\in \lmlt(Q)_{[a]}$. Then $ h(a,s)=(a,s+\Theta_{h,a})$, where $\Theta_{h,a}$ does not depend on $s$. Hence
	$$gh(a,s)=(a,s+\Theta_{h,a}+\Theta_{g,a} )=hg(a,s), $$
	for every $h,g\in \lmlt(Q)_{[a]}$. Therefore $(\lmlt{(Q)}_{[a]})|_{[a]}$ is abelian for every $a\in Q$.
	%\comment{use the same notation as in the identity section. Problem: ordering}  
	%
%	(ii) $\Rightarrow$ (i) Let $a\in Q$. The group $A=\lmlt(Q)_{[a]}|_{[a]}$ is regular on $[a]$. Indeed it is transitive by virtue of Proposition \ref{prop:connected ext} and semiregular since it is abelian. So we can define the abelian group $([a],+)$ as in Remark \ref{group structure} with $a$ as the unit. Since $Q$ is connected for every $[b]$ there exists $h_{[b]}\in \lmlt(Q)$ such that $h_{[b]}:[b]\to [a]$ is a bijection.  Define $\beta_{[b],[c]}(s)=h_{[b\ast c]}L_{b_0}h_{[c]}^{-1}(s)$ for every $s\in [a]$ and $\theta_{[b],[c]}=\beta_{[b],[c]}(a)$. Let $s=k(a)\in [a]$ where $k\in A$ is the unique element which maps $a$ to $s$. Then
%	\begin{displaymath}
%	s+\theta_{[b],[c]}=kh_{[b\ast c]}L_{b_0} h_{[c]}^{-1}(a)=h_{[b\ast c]}L_{b_0} h_{[c]}^{-1}k(a)=h_{[b\ast c]}L_{b_0} h_{[c]}^{-1}(s+a)=\beta_{[b],[c]}(s)
%	\end{displaymath}
%	where we used that $h_{[b\ast c]}L_{b_0} h_{[c]}^{-1}|_{[a]}\in A$. Therefore $Q$ is an abelian covering of $Q/\alpha$.
%	
	
	(ii) $\Rightarrow$ (i) Let $\setof{e_B}{B\in Q/\alpha}$ be a set of representatives of the blocks of $\alpha$. Let $e\in Q$ be a fixed element in $Q$. Let $\setof{g_{[b]}}{[b]\in Q/\alpha}\subseteq \lmlt(Q)$ be a set of mappings such that $g_{[b]}:[e]\longrightarrow [b]$ (such mappings exist since $Q$ is connected). The group $A=(\lmlt(Q)_{[e]})|_{[e]}$ acts regularly on $[e]$ (transitivity follows from Proposition \ref{prop:connected ext} and semiregularity from abelianness). Hence the mapping
	$$h_{[b]}:A\to [b], \quad s\mapsto g_{[b]}s(e)$$
is a bijection. Since $g^{-1}_{[a*b]}L_{e_{[a]}}g_{[b]}\in \lmlt(Q)_{[e]}$, define $\theta_{[a],[b]}=(g^{-1}_{[a*b]}L_{e_{[a]}}g_{[b]})|_{[e]}$. 
As in the proof of Proposition \ref{abelian cov iff}, the mapping $a\mapsto ([a]_\alpha,h_{[a]}^{-1}(a)) $ is an isomorphism $Q\to Q\times_{\theta} A$, as long as we verify, for every $s\in A$, 
\begin{eqnarray*}
h_{[a*b]}^{-1} L_{e_{[a]}} h_{[b]}(s) &=& h_{[a*b]}^{-1} L_{e_{[a]}} g_{[b]}s(e) = h_{[a*b]}^{-1} g_{[a*b]} g_{[a*b]}^{-1} L_{e_{[a]}} g_{[b]}s(e)\\
&=& h_{[a*b]}^{-1} g_{[a*b]} s g_{[a*b]}^{-1} L_{e_{[a]}} g_{[b]}(e) = h_{[a*b]}^{-1} g_{[a*b]} s \theta_{[a],[b]}(e)= s\theta_{[a],[b]}.\quad\qedhere
\end{eqnarray*}
%	%
%	%
%	%(i) $\Rightarrow$ (ii) Let take $\setof{b_0}{[b]\in Q/\alpha}$ be a set of representative of the blocks of $\alpha$. Choose $[a]\in Q/\alpha$ and let $\setof{g_{[b]}}{[b]\in Q/\alpha}\subseteq \lmlt(Q)$ such that $g_{[b]}(b_0)=a_0$ (they exist since $Q$ is connected). Since $Q$ is connected and $A$ is abelian, then $A$ is regular on $[a]$. Define:
%	%\begin{displaymath}
%	%h_{[b]}:[b]\to A,  \quad h_{[b]}(r)(a_0)=g_{[b]}(r),
%	%\end{displaymath}
%	%for every $r\in [b]$. The mapping is well defined since it $A$ is regular on $[a]$. Hence, we can define the cocycle:
%	%\begin{displaymath}
%	%\beta(b,c)(s)=h_{[bc]}(b_0\ast h_{[c]}^{-1}(s)),
%	%\end{displaymath}
%	%for every $b,c\in Q/\alpha$ ans $s\in A$. Set $\theta(b,c)=\beta(b,c)(0)=h_{[bc]}(b_0\ast c_0)$, i.e. $\theta(b,c)(a_0)=g_{[bc]}(b_0\ast c_0)$. Note that $\beta(b,c)(s)=h_{[bc]}(b_0\ast h_{[c]}^{-1}(s))=t$ if and only if 
%	%$$t(a_0)=g_{[bc]}(b_0\ast g_{[c]}^{-1}s(a_0))=g_{[bc]}L_{b_0}g_{[c]}^{-1}s(a_0).$$ Since $g_{[bc]}L_{b_0}g_{[c]}^{-1}\in \lmlt(Q)_{[a]}$, then $g_{[bc]}L_{b_0}g_{[c]}^{-1}|_{[a]}\in A$ and so it commutes with every $s\in A$. Therefore, we have:
%	%$$t(a_0)=g_{[bc]}L_{b_0}g_{[c]}^{-1}s(a_0)=sg_{[bc]}L_{b_0}g_{[c]}^{-1}(a_0)=sg_{[bc]}(b_0\ast c_0)=(s+\theta(b,c))(a_0),$$
%	%so $t=s+\theta(b,c)$, for every $b,c\in Q/\alpha$ and $s\in A$. 
\end{proof}
%The corollary fails for non-connected quandles: let $E=Q\times P$ where $P$ is a projection quandle, then $E$ is an abelian covering, but $\dis^\alpha=1$ is not transitive on the blocks.

Using Proposition \ref{character ab cov}, we can provide examples of central coverings which cannot be realized by abelian covering extensions.

\begin{example}\label{non-abelian-but-central} 
Let $Q=\mathcal{Q}(\dis(Q),\dis(Q)_a,\widehat{L_a})$ be a connected quandle. Let $E=\mathcal{Q}(\dis(Q),\widehat{L_a})$ and $p:E\to Q$ the quandle homomorphism defined by $b\mapsto b\dis(Q)_a$. 
Then $E$ is a connected quandle and $p$ is a central covering according to Proposition \ref{central iff}. In this case the stabilizer of the block of $1$ with respect to $\ker{p}$ is $\dis(Q)_a$. Hence if $\dis(Q)_a$ is not abelian then $Q$ is not an abelian cover of $Q/\alpha$ by Proposition \ref{character ab cov} (as already showed in \cite[Lemma B.6]{GenAlex}).  
\end{example}

%\subsection{Covering of principal quandles}

%
%\comment{Maybe this can be stated replacing principality by $\sigma_Q=1_Q$. NO!}
%
The following theorem characterizes connected coverings of principal quandles.

\begin{theorem}\label{covering of principal}
Let $Q$ be a connected quandle and $\alpha\leq\lambda_Q$. The following conditions are equivalent:
\begin{itemize}
\item[(i)] $Q/\alpha$ is principal.
\item[(ii)] $Q$ is principal and $\dis^\alpha$ is transitive on each block of $\alpha$.
\end{itemize}
If the conditions hold then $Q$ is an abelian cover of $Q/\alpha$. 
%
%Let $Q$ be a connected quandle covering of a principal quandle $Q/\alpha$. Then $Q$ is a principal and $Q$ is an abelian covering of $Q/\alpha$. 
\end{theorem}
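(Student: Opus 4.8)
The plan is to reduce everything to point- and block-stabilizers of the displacement group, exploiting the surjection $\pi_\alpha\colon\dis(Q)\to\dis(Q/\alpha)$, whose kernel is $\dis^\alpha$ and which identifies the block stabilizer as $\dis(Q)_{[a]_\alpha}=\pi_\alpha^{-1}(\dis(Q/\alpha)_{[a]})$. Since $Q$ is connected, so is the image $Q/\alpha$, and Proposition \ref{prop:connected ext} then guarantees that $\dis(Q)_{[a]_\alpha}$ acts transitively on $[a]_\alpha$ for every $a$. Throughout I would use the key structural fact, provided by $\alpha\leq\lambda_Q$ together with Corollary \ref{prop:covering_central}, that $\dis^\alpha\leq\dis^{\lambda_Q}=Z(\lmlt(Q))\cap\dis(Q)\leq Z(\dis(Q))$. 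In particular $\dis^\alpha$ is abelian and, being a central subgroup of the transitive group $\dis(Q)$, it is semiregular on $Q$; I will invoke Proposition \ref{caratt principal} in the form ``a connected quandle is principal iff all its displacement point-stabilizers are trivial''.

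For (i) $\Rightarrow$ (ii) I would assume $Q/\alpha$ principal, so $\dis(Q/\alpha)_{[a]}=1$, whence $\dis(Q)_{[a]_\alpha}=\pi_\alpha^{-1}(1)=\dis^\alpha$. The transitivity of the block stabilizer coming from Proposition \ref{prop:connected ext} then reads precisely as transitivity of $\dis^\alpha$ on each block, giving the second half of (ii). Principality of $Q$ follows immediately: $\dis(Q)_a\leq\dis(Q)_{[a]_\alpha}=\dis^\alpha$, and semiregularity of $\dis^\alpha$ forces $\dis(Q)_a=(\dis^\alpha)_a=1$.

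For (ii) $\Rightarrow$ (i) I would assume $Q$ principal (so $\dis(Q)_a=1$) and $\dis^\alpha$ transitive on each block. Given any $h\in\dis(Q)$ with $h(a)\,\alpha\,a$, transitivity yields $g\in\dis^\alpha$ with $g(a)=h(a)$, so $g^{-1}h\in\dis(Q)_a=1$, hence $h=g\in\dis^\alpha=\ker\pi_\alpha$. This shows $\dis(Q/\alpha)_{[a]}=1$, i.e.\ $Q/\alpha$ is principal.

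For the final clause I would apply Proposition \ref{abelian cov iff} with the subgroup $A=\dis^\alpha\leq\aut(Q)$: it is abelian by centrality, it satisfies $A_a=1$ by semiregularity, and $a^A=[a]_\alpha$ since $\dis^\alpha$ keeps $a$ inside its $\alpha$-block and acts transitively on it by (ii). I expect the two equivalences to be little more than bookkeeping with the exact sequence $1\to\dis^\alpha\to\dis(Q)\to\dis(Q/\alpha)\to1$; the step demanding genuine care is the abelian-cover conclusion, where one must check all three hypotheses of Proposition \ref{abelian cov iff} at once and locate exactly where the centrality $\dis^\alpha\leq Z(\dis(Q))$ is indispensable — it is what upgrades ``transitive on each block'' to a \emph{regular} action and simultaneously supplies semiregularity on all of $Q$, the two ingredients that Proposition \ref{abelian cov iff} requires.
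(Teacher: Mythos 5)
Your proof is correct, and for the two equivalences it follows essentially the same route as the paper: identify $\dis(Q)_{[a]_\alpha}$ with $\pi_\alpha^{-1}(\dis(Q/\alpha)_{[a]})$, use Proposition \ref{prop:connected ext} for transitivity of the block stabilizer, and use $\dis^\alpha\leq Z(\dis(Q))$ (from $\alpha\leq\lambda_Q$ and Corollary \ref{prop:covering_central}) to upgrade transitivity on blocks to regularity and to kill $\dis(Q)_a$. Two small differences are worth noting. First, where the paper invokes an external result (Proposition 2.9 of the cited paper on principal quandles) for the equivalence ``$Q/\alpha$ principal $\Leftrightarrow$ $\dis(Q)_{[a]_\alpha}=\dis^\alpha$'', you re-derive both directions from the definitions (preimage of the trivial stabilizer in one direction; the $g^{-1}h\in\dis(Q)_a=1$ computation in the other), which makes the argument self-contained. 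Second, for the abelian-cover clause the paper applies Proposition \ref{character ab cov} (abelianness of the restricted block stabilizer $(\lmlt(Q)_{[a]})|_{[a]}$), whereas you apply Proposition \ref{abelian cov iff} directly with $A=\dis^\alpha$, checking that it is abelian, semiregular, and has the $\alpha$-blocks as orbits; this is arguably the cleaner route, since all three hypotheses drop out of centrality plus condition (ii) without passing through the block-stabilizer characterization. Both versions are valid.
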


\begin{proof}

(i) $\Rightarrow$ (ii) According to \cite[Proposition 2.9]{Principal} if $Q/\alpha$ is principal, then $\dis(Q)_{[a]_\alpha}=\dis^\alpha$ and it is transitive on each block of $\alpha$ according to Proposition \ref{prop:connected ext}. Then $\dis(Q)_a\leq \dis^\alpha\leq Z(\dis(Q))$ and so $\dis^\alpha$ is regular on each block of $\alpha$. Thus $\dis(Q)_a=1$ and so $Q$ is principal by Proposition \ref{caratt principal}. 

(ii) $\Rightarrow$ (i) Both the subgroups $\dis^\alpha$ and $\dis(Q)_{[a]}$ are regular on each block of $\alpha$ since $Q$ is connected and principal (see Proposition \ref{prop:connected ext} again) and their orbits correspond to their cosets. Therefore $\dis^\alpha=\dis(Q)_{[a]}$ and so $Q/\alpha$ is principal by virtue of \cite[Proposition 2.9]{Principal}.  

If (i) holds then $\dis^\alpha=\dis{(Q)}_{[a]}\leq Z(\lmlt{Q})$ is abelian. So we can apply Proposition \ref{character ab cov} and then $Q$ is an abelian cover of $Q/\alpha$.
\end{proof}

\begin{corollary}
Let $Q$ be an affine quandle. Then every connected quandle cover of $Q$ is principal and it is an abelian cover.
\end{corollary}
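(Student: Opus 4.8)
The plan is to recognize this as an immediate consequence of Theorem \ref{covering of principal}, once the roles are correctly matched. First I would fix notation: let $p\colon E\to Q$ be a covering homomorphism exhibiting $E$ as a connected cover of the affine quandle $Q$, and set $\alpha=\ker{p}$, a congruence of $E$ with $E/\alpha\cong Q$. Since $p$ is a covering homomorphism, its kernel is contained in the Cayley kernel, so $\alpha\leq\lambda_E$. Moreover, because $E$ is connected and $Q\cong E/\alpha$ is a homomorphic image of $E$, the base $Q$ is automatically connected. Thus $E$ and $\alpha$ satisfy exactly the standing hypotheses of Theorem \ref{covering of principal}, with $E$ playing the role of the upstairs quandle ``$Q$'' of that theorem and our $Q$ playing the role of its ``$Q/\alpha$''.

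Next I would record the trivial but essential observation that an affine quandle is principal: by definition $\aff(G,f)=\Q(G,f)$ with $G$ abelian, so in particular $Q$ is isomorphic to some $\Q(G,f)$. Hence $E/\alpha\cong Q$ is principal, which is precisely condition (i) of Theorem \ref{covering of principal}.

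Finally, applying the implication (i)$\Rightarrow$(ii) of Theorem \ref{covering of principal} yields that $E$ is principal (and that $\dis^\alpha$ is transitive on each block of $\alpha$), while the concluding clause of that theorem gives that $E$ is an abelian cover of $E/\alpha\cong Q$. Together these establish both assertions of the corollary.

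There is essentially no substantive obstacle here; the statement is a direct corollary. The only points requiring care are orientation and hypothesis-checking: one must match the two occurrences of ``$Q$'' in the theorem against the two quandles $E$ and $Q$ of the corollary, use that a covering kernel lies under $\lambda_E$, and note that connectedness of the base is free since it is a homomorphic image of the connected cover $E$.
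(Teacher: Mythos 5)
Your proof is correct and matches the paper's (implicit) argument: the corollary is stated as an immediate consequence of Theorem \ref{covering of principal}, obtained exactly as you describe by taking $\alpha=\ker p\leq\lambda_E$, noting that an affine quandle is principal by definition, and invoking (i)$\Rightarrow$(ii) together with the theorem's final clause. The points of care you flag (matching the two roles of ``$Q$'', and connectedness of the base being automatic) are precisely the right ones.
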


%Theorem \ref{covering of principal} applies in particular to affine quandles. 
%Principal quandles which are not covers of principal quandles exist: actually every connected quandle $Q$ have a principal covering $\mathcal{Q}(\dis(Q),\widehat{L_a})$ (see Lemma \ref{Prop:extension homogeneous} and Example \ref{non-abelian-but-central}).
%
%SmallQuandle(12,1) from the RIG library which is principal and it is an abelian covering of the non-principal quandle SmallQuandle(6,1). \comment{check if it is really abelian}.
%\begin{proposition}
%Let $Q$ be a connected quandle and $\alpha\leq\lambda_Q$. The following are equivalent:
%\begin{itemize}
%\item[(i)] $Q/\alpha$ is principal.
%\item[(ii)] $Q$ is principal and $\dis^\alpha$ is transitive on each block.
%\end{itemize}
%\end{proposition}
%\begin{proof}
%(i) $\Rightarrow$ (ii) See Theorem \ref{covering of principal}.
%
%(ii) $\Rightarrow$ (i) We have that $\dis^\alpha\leq \dis(Q)_{[a]}$ is regular on each block of $\alpha$, therefore $\dis^\alpha=\dis(Q)_{[a]}$, therefore $Q/\alpha$ is principal. 
%\end{proof}

\begin{example}\label{Answer to V}
Let $A$ be an abelian group and $f\in \aut(A)$ such that $1-f$ is surjective but not injective. Then $ Q=\aff(A,f)$ is connected but not faithful and $1-f$ is a quandle homomorphism whose kernel is $\lambda_Q$. Hence we have an infinite chain of connected abelian covers
\begin{displaymath}
\ldots\longrightarrow Q\longrightarrow Q\longrightarrow  Q \longrightarrow \ldots
\end{displaymath}
in which at each step we take the factor $Q/\lambda_Q\simeq Q$.
This example answers in a positive way to \cite[Question 8.7]{CSV}. Note that $\dis(Q/\lambda_Q)\cong \dis(Q)$ but $\dis^{\lambda_Q}=\mathrm{Fix}(f)$, hence $\pi_{\lambda_Q}$ is not an isomorphism.

For instance, one can take any $n$-divisible abelian group $A$ (i.e. $nA=A$) with non-trivial $n$-torsion (i.e. $\setof{a\in A}{na=0}\neq 0$), and define $Q=\aff(A,n+1)$. 
Concrete examples are given by $\aff(\mathbb{Z}_{p^\infty},1+p)$ for every prime $p$ (here $\Z_{p^\infty}$ denotes the Pr\"ufer group), or $\aff(\mathbb{Q}/\Z,n)$ for every $n$. 
\end{example}

\section{The largest idempotent factor}

Let $Q$ be a rack. 
For a subset $X\subseteq Q$, we denote by $Sg(X)$ the subrack generated by $X$, the smallest subrack of $Q$ containing $X$. 
Note that $Sg(a)=\setof{L_a^k(a)}{k\in \mathbb{Z}}$ is a connected permutation rack, since $L_a^k(a)*u=L_{L_a^k(a)}(u)=L_a^kL_aL_a^{-k}(u)=a*u$, which is independent of $k$. 
Therefore, $Sg(a)$ is isomorphic either to $C_\infty=(\Z,*)$ with $x*y=y+1$, or to $C_n=(\Z_n,*)$ with $x*y=y+1\bmod n$.

Let $\m_Q$ denote the smallest congruence such that the corresponding factor is idempotent. Equivalently, the smallest congruence such that $a\,\m_Q\,(a*a)$ for every $a\in Q$ (see \cite[Section 2]{Sta-SI}).

\begin{proposition}
Let $Q$ be a rack. Then: 
\begin{enumerate}
%	\item $a\,\m_Q\, b$ if and only $Sg(a)=Sg(b)$;
	\item $[a]_{\m_Q}=Sg(a)$ for every $a\in Q$; % is a subrack isomorphic to $C_n$ for some $n\in\N\cup\{\infty\}$;
	\item $\m_Q$ is strongly abelian and central;
	\item The mapping
	$$\pi_{\m_Q}:\aut(Q)\to\aut(Q/\m_Q),\quad \pi_{\m_Q}(h)([x])=[h(x)],$$ is a well defined group homomorphism;
	\item $Q$ is connected if and only if $Q/\m_Q$ is connected;
	\item if $Q$ is homogeneous, then $Q$ is an abelian cover of $Q/\m_Q$. %In particular, $Q\cong Q/\m_Q\times_{\theta} A$ where $A$ is a cyclic group.
\end{enumerate}
\end{proposition}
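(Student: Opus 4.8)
The proposition has five parts; I would prove them in order, since later parts lean on earlier ones. The governing congruence is $\m_Q$, the smallest congruence making the factor idempotent, characterized by $a\,\m_Q\,(a*a)$ for every $a$.

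\textbf{Part (1).} I would show $[a]_{\m_Q}=Sg(a)$ by two inclusions. Since $\m_Q$ forces $a\,\m_Q\,(a*a)=L_a(a)$ and $\m_Q$ is a congruence, iterating the translation $L_a$ (an automorphism in a rack) gives $a\,\m_Q\,L_a^k(a)$ for all $k\in\Z$; as $Sg(a)=\setof{L_a^k(a)}{k\in\Z}$ from the preliminary observation, this yields $Sg(a)\subseteq[a]_{\m_Q}$. For the reverse inclusion I would exhibit the partition into the sets $Sg(b)$ as a congruence whose factor is idempotent: each $Sg(b)$ is a connected permutation rack (either $C_\infty$ or $C_n$), the factor identifies each such block to a point, and one checks the relation ``$x,y$ generate the same cyclic subrack'' is compatible with $*,\ld$ using left distributivity. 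By minimality of $\m_Q$, the blocks of $\m_Q$ are contained in these sets, giving $[a]_{\m_Q}\subseteq Sg(a)$. The main subtlety is verifying the partition-into-$Sg(b)$'s is genuinely a congruence, which is where left distributivity does the work.

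\textbf{Parts (2)--(4).} Strong abelianness of $\m_Q$ reduces, by Proposition \ref{l:strongly_abelian iff under_lambda}, to showing $\m_Q\leq\lambda_Q$: if $a\,\m_Q\,b$ then $b\in Sg(a)$, so $b=L_a^k(a)$ for some $k$, and the preliminary computation $L_b=L_{L_a^k(a)}=L_a$ gives $a\,\lambda_Q\,b$. For centrality I would invoke Proposition \ref{central iff}: since $\m_Q\leq\lambda_Q$ is already established, it suffices to check $\m_Q\leq\sigma_Q$, i.e. $\dis(Q)_a=\dis(Q)_b$ whenever $b=L_a^k(a)$; this follows because $L_a\in\aut(Q)$ normalizes $\dis(Q)$ and fixes the point $a$ only up to the cyclic action, so conjugation by $L_a^k$ preserves the stabilizer. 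Part (3) is then exactly Proposition \ref{p:pi_aut} applied with $\m_Q$ in place of $\lambda_Q$ (well-definedness of $\pi_{\m_Q}(h)$ uses precisely $\m_Q\leq\lambda_Q$ from part (2), the same argument showing $h$ preserves the relation). Part (4) follows from Proposition \ref{prop:connected ext}: one direction is that homomorphic images of connected racks are connected; the converse uses that each block $Sg(a)$ is itself connected, so the block stabilizer acts transitively on each block, and the criterion of Proposition \ref{prop:connected ext} upgrades connectedness of $Q/\m_Q$ to connectedness of $Q$.

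\textbf{Part (5).} Assuming $Q$ is homogeneous, I would produce an abelian subgroup witnessing the criterion of Proposition \ref{abelian cov iff}, applied with $\alpha=\m_Q$ (legitimate since $\m_Q\leq\lambda_Q$). The natural candidate is the group generated inside $\aut(Q)$ by the translations $L_a$ restricted to act compatibly across blocks: on each block $Sg(a)\cong C_n$ or $C_\infty$ the relevant automorphisms act as cyclic shifts, which are abelian. The hard part, and the place I expect most resistance, is upgrading the per-block abelian cyclic actions into a single abelian subgroup $A\leq\aut(Q)$ acting regularly on every block simultaneously with $A_a=1$ and $a^A=[a]_{\m_Q}$; homogeneity is exactly the hypothesis that lets one transport the shift on one block to all others coherently, so the proof should construct $A$ from a fixed shift via the homogeneity automorphisms and check it is well-defined and abelian. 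Once such $A$ is in hand, Proposition \ref{abelian cov iff} delivers that $Q$ is an abelian cover of $Q/\m_Q$ directly.
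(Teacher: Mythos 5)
Your parts (1)--(4) essentially follow the paper's route: you exhibit the partition into the subracks $Sg(b)$ as a congruence and invoke minimality for (1), reduce strong abelianness to $\m_Q\leq\lambda_Q$ via $L_{a*a}=L_aL_aL_a^{-1}=L_a$, and handle (4) through Proposition \ref{prop:connected ext} using that $L_a$ acts as the shift on $Sg(a)$. Two local repairs are needed, though. In (3), the parenthetical ``well-definedness uses precisely $\m_Q\leq\lambda_Q$'' is not a valid justification: an automorphism preserving $\lambda_Q$ need not preserve the finer relation $\m_Q$. What you actually need is that $h(L_a^k(a))=L_{h(a)}^k(h(a))$, i.e.\ $h(Sg(a))=Sg(h(a))$, which is the identity the paper uses. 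In (2), Proposition \ref{central iff} is stated only for \emph{connected quandles}, so for a general rack you must fall back on Proposition \ref{central congruences} (for $\alpha\leq\lambda_Q$ one has $\dis_\alpha=1$, so centrality reduces to $\alpha\leq\sigma_Q$); and your reason for $\m_Q\leq\sigma_Q$ (``conjugation by $L_a^k$ preserves the stabilizer'') is circular as written --- the clean argument is that any $g\in\dis(Q)_a$ satisfies $g(L_a^k(a))=L_{g(a)}^k(g(a))=L_a^k(a)$, so the stabilizer of $a$ fixes $Sg(a)$ pointwise.

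The genuine gap is in (5). You correctly isolate what must be produced --- a single abelian $A\leq\aut(Q)$ with $A_a=1$ and $a^A=Sg(a)$ for every $a$, so that Proposition \ref{abelian cov iff} applies --- but you do not construct it. ``The group generated by the translations $L_a$ restricted to blocks'' is not a well-defined subgroup of $\aut(Q)$ ($L_a$ is a shift only on its own block $Sg(a)$, and the group generated by all translations is $\lmlt(Q)$, generally nonabelian), and transporting a fixed shift to the other blocks via homogeneity automorphisms gives no canonical global map, since different transporting automorphisms can disagree. The missing idea is that the squaring map $s\colon a\mapsto a*a$ is itself an automorphism of $Q$ (because $(a*b)*(a*b)=L_aL_bL_a^{-1}(a*b)=a*(b*b)$), that it restricts to the shift on \emph{every} block simultaneously since $s^k(a)=L_a^k(a)$, and that homogeneity forces all blocks to be isomorphic to the same $C_m$, whence $\langle s\rangle_a=1$. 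Taking $A=\langle s\rangle$ then finishes the proof exactly as you intended; without this (or an equivalent) construction, part (5) remains unproved.
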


\begin{proof}
(1) %See \cite[Proposition 2.4]{Sta-SI}.
%If $a\,\m_Q\,c$ then $c\in Sg(c)=Sg(a)$. On the other hand, if $c=L_a^k(a)\in Sg(a)$, then $L_c=L_{L_a^k(a)}=L_a^k L_a L_a^{-k}=L_a$ and so $m_Q\leq \lambda_Q$. Moreover, $a=L_a^{-k}(c)=L_c^{-k}(c)$, i.e. $Sg(c)=Sg(a)$ and therefore $[a]_{\m_Q}=Sg(a)$. Let $a\, \m_Q\, c=L_a^k(a)$ and $b\,\m_Q\,d=L_b^m(b)$ for some $m,k\in\mathbb{Z}$. %
%Then we have  $$L_{c}^{\pm 1} (d)=L_a^{\pm 1} L_b^m(b)=L_{L_a^{\pm 1}(b)}^m L_a^{\pm 1}(b),$$ so $L_c^{\pm 1}(d)\, \m_Q\, L_a^{\pm 1}(b)$ and $\m_Q$ is a congruence. In particular $Q/m_Q$ is a quandle.
We will show that $a\,\m_Q\,b$ if and only if $b\in Sg(a)$. The backward implication is clear. To show the forward implication, it is enough to check that $\alpha=\{(a,b):b\in Sg(a)\}$ is a congruence of $Q$, hence it must be the smallest congruence containing all pairs $(a,aa)$. Reflexivity is clear. For symmetry, if $b\in Sg(a)$, then $b=L_a^k(a)$, hence $a=L_a^{-k}(b)=L_b^{-k}(b)$, since $Sg(a)$ is a permutation rack. For transitivity, if $b=L_a^k(a)$ and $c=L_b^l(b)$, then $c=L_b^l(L_a^k(a))=L_a^{k+l}(a)$, for the same reason. Hence $\alpha$ is an equivalence. Let $b\in Sg(a)$, $b=L_a^k(a)$. Finally, $b*c=L_a^k(a)*c=a*c$, and thus $b*c\in Sg(a*c)$. Also $c*b=c*L_a^k(a)=L_{c*a}^k(c*a)$ using left distributivity, hence $c*b\in Sg(c*a)$.

(2) To prove that $\m_Q\leq\lambda_Q$, it is sufficient to observe that $L_{a*a}=L_{L_a(a)}=L_aL_aL_a^{-1}=L_a$, for every $a\in Q$.
To prove that $\m_Q$ is central, it remains to observe that $\dis(Q)$ is $\m_Q$-semiregular: if $h\in\dis(Q)$ fixes $a$, then $h$ fixes the whole $Sg(a)=[a]$, since $h$ is an automorphism.

(3) Let $h\in\aut(Q)$. Then $h(L_a^k(a))=L_{h(a)}^k(h(a))$ for every $a\in Q$. Therefore, if $a\,\m_Q\, b$, then $b=L_a^k(a)$ for some $k$, and we see that $h(a)\,\m_Q\, h(b)$. Hence the mapping $\pi_{\m_Q}$ is well defined.

(4) The forward implication is obvious. In the other direction, if $[a_1]\cdots[a_n][x]=[y]$ in $Q/\m_Q$, then $L_{a_1}\ldots L_{a_n}(x)\,\m_Q\, y$ in $Q$, and thus $L_y^kL_{a_1}\ldots L_{a_n}(x)=y$ for certain $k$.

(5) Observe that the mapping $s:Q\to Q$, $a\mapsto a*a$ is an automorphism of $Q$: indeed, $s(a*b)=(a*b)*(a*b)=a*(b*b)=s(a)*s(b)$, using the fact that blocks of $\m_Q$ are permutation racks in the last step. Let $A=\langle s\rangle\leq\aut(Q)$. Since $s^n(a)=L_a^n(a)$, we see that $[a]_{\m_Q}=a^A$. Since $Q$ is homogeneous, all blocks of $\m_Q$ are pairwise isomorphic subracks and $A_a=1$ for every $a$. Proposition \ref{abelian cov iff} finishes the proof.
\end{proof}

\begin{corollary}
Every rack is a central cover of a quandle. Every connected rack is an abelian cover of a connected quandle.
\end{corollary}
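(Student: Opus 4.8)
The plan is to obtain both assertions directly from the preceding Proposition by taking $\alpha=\m_Q$, so that essentially no new work is required beyond checking the hypotheses of its two relevant parts.

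For the first claim I would argue as follows. By the very definition of $\m_Q$ the factor $Q/\m_Q$ is idempotent, hence a quandle. Part (2) of the Proposition states that $\m_Q$ is strongly abelian, so Proposition \ref{l:strongly_abelian iff under_lambda} gives $\m_Q\leq\lambda_Q$; consequently the canonical projection $Q\to Q/\m_Q$ is a covering homomorphism in the sense of Section \ref{ss:cayley}. Since part (2) also asserts that $\m_Q$ is central, this covering is central by definition, which establishes that every rack is a central cover of a quandle.

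For the second claim, suppose $Q$ is connected. Then $Q/\m_Q$ is again connected by part (4) of the Proposition, and it is a quandle as above. To invoke part (5) I need $Q$ to be homogeneous, and this is the only step requiring a separate (though entirely routine) observation: in any rack every translation is an automorphism, so $\lmlt(Q)\leq\aut(Q)$, and transitivity of $\lmlt(Q)$ forces transitivity of $\aut(Q)$; thus a connected rack is homogeneous. Part (5) then yields that $Q$ is an abelian cover of $Q/\m_Q$, completing the argument.

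Since everything is read off from the Proposition, I do not expect a genuine obstacle here; the only point deserving a word is the homogeneity of connected racks, which is immediate from the fact that translations act as automorphisms.
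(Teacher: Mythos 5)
Your proof is correct and follows exactly the route the paper intends: the corollary is stated without proof precisely because it is read off from parts (2), (4) and (5) of the preceding proposition, together with Proposition \ref{l:strongly_abelian iff under_lambda} to see that $\m_Q\leq\lambda_Q$ makes the projection a covering. Your one added observation, that a connected rack is homogeneous because translations are automorphisms, is the right (and only) missing hypothesis check for part (5).
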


The following is an analog of Lemma \ref{lemma on cohomology}. The proof is similar and it is based on the fact that an isomorphism between two racks $E$ and $E^\prime$ induces an isomorphism between $E/\m_E$ and $E^\prime/\m_{E^\prime}$.

\begin{lemma}\label{lemma on cohomology2}
Let $Q$ be a rack, $\theta,\varepsilon$ rack cocycles over a set $A$, $E=Q\times_{\theta} A$, $E^\prime=Q\times_{\varepsilon} A$, $\pi$ be the canonical projection onto $Q$ and assume that $\m_{E}=\m_{E^\prime}=\ker{\pi}$.
%\bigcup_{q\in Q}\{q\}\times A$.
Then $E\cong E^\prime$ if and only if there exists $g\in \aut(Q)$ such that $\theta$ and $\varepsilon\circ (g\times g)$ are cohomologous.
\end{lemma}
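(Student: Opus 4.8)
The plan is to follow the proof of Lemma \ref{lemma on cohomology} line for line, replacing the Cayley kernel $\lambda$ by the congruence $\m$ everywhere. The single structural ingredient that must take over the role played there by ``isomorphisms preserve $\lambda_E$'' is that $\m$ is an isomorphism invariant: since $\m_E$ has the intrinsic description recalled in the preceding proposition (the smallest congruence whose quotient is idempotent) and idempotency is preserved by isomorphism, any isomorphism $f\colon E\to E'$ satisfies $(f\times f)(\m_E)=\m_{E'}$ and hence descends to an isomorphism $\bar g\colon E/\m_E\to E'/\m_{E'}$. Under the hypothesis $\m_E=\m_{E'}=\ker{\pi}$ both quotients are identified with $Q$, so $\bar g$ becomes an automorphism $g\in\aut(Q)$.

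For the implication $(\Leftarrow)$ I would copy the construction of Lemma \ref{lemma on cohomology} unchanged: from a map $\gamma\colon Q\to\Sym(A)$ witnessing that $\theta$ and $\varepsilon\circ(g\times g)$ are cohomologous, the assignment $(x,a)\mapsto(g(x),\gamma_x(a))$ is verified to be an isomorphism $E\to E'$ by the same short computation. This direction uses neither $\m$ nor the hypothesis $\m_E=\ker{\pi}$.

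For $(\Rightarrow)$ I would take an isomorphism $f\colon E\to E'$ and first produce $g\in\aut(Q)$ by the invariance argument above. Since $f$ maps each $\m_E$-block onto an $\m_{E'}$-block, and these blocks are exactly the fibres $\{x\}\times A$ of $\pi$ (by $\m_E=\ker{\pi}$; consistently, part (1) of the preceding proposition gives $[x]_{\m_E}=Sg(x)$), the map $f$ must have the coordinatised form $f(x,a)=(g(x),\gamma_x(a))$ for suitable $\gamma_x\in\Sym(A)$. Expanding $f((x,a)*(y,b))=f(x,a)*f(y,b)$ in the two extensions then yields $\varepsilon_{g(x),g(y)}\circ\gamma_y=\gamma_{x*y}\circ\theta_{x,y}$, which is precisely the assertion that $\theta$ and $\varepsilon\circ(g\times g)$ are cohomologous.

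The only genuinely new point, and hence the main obstacle, is the first paragraph: verifying that $f$ respects $\m$, so that $g$ exists and lands in $\aut(Q)$ after the identification of both quotients with $Q$. Everything downstream --- the coordinatisation of $f$ and the extraction of the cocycle identity --- is then formally identical to Lemma \ref{lemma on cohomology}, so no further calculation is needed.
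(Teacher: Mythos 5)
Your proposal is correct and follows exactly the route the paper intends: the paper itself only sketches the proof as being the same as Lemma \ref{lemma on cohomology} with $\lambda$ replaced by $\m$, resting precisely on the fact you isolate, namely that $\m$ is intrinsically defined (smallest congruence with idempotent quotient) and hence preserved by any isomorphism, which yields the automorphism $g$ of $Q$ after identifying both quotients with $Q$ via $\m_E=\m_{E'}=\ker{\pi}$. No divergence from the paper's argument.
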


%
%\begin{proof}
%(ii) $\Rightarrow$ (i) The same argument of of Lemma \ref{lemma on cohomology} applies.
%
%(i) $\Rightarrow$ (ii) If $h:E\longrightarrow E^\prime$ is an isomorphism and $a\,\m_E\, b$ then $Sg(h(a))=h(Sg(a))=h(Sg(b))=Sg(h(b))$ and so $h$ induces an isomorphism between $E/\m_E$ and $E^\prime/m_{E^\prime}$ and so $h(x,a)=(g(x),\gamma_x(a))$ where $g\in \aut{Q}$ and $\gamma_x\in \sym(A)$ for every $x\in Q$. Then we can conclude as in  Lemma \ref{lemma on cohomology}.
%\end{proof}

\bibliographystyle{abbrv}
\bibliography{references} % References file

\def\cprime{$'$} \def\cprime{$'$}
\begin{thebibliography}{10}

\bibitem{AG}
N.~Andruskiewitsch and M.~Gra{\~n}a.
\newblock From racks to pointed {H}opf algebras.
\newblock {\em Adv. Math.}, 178(2):177--243, 2003.

\bibitem{GiuThe}
G.~Bianco.
\newblock {\em On the Transvection group of a rack}.
\newblock PhD thesis, Universit\`a degli studi di Ferrara, 2015.

\bibitem{GB}
G.~{Bianco} and M.~{Bonatto}.
\newblock {On connected quandles of prime power order}.
\newblock {\em Beitrage zur Algebra und Geometrie}, Apr 2020.

\bibitem{Principal}
M.~Bonatto.
\newblock Principal and doubly homogeneous quandles.
\newblock {\em Monatshefte f{\"u}r Mathematik}, 191(4):691--717, 2020.

\bibitem{BKSV}
M.~Bonatto, M.~Kinyon, D.~Stanovsk\'y, and P.~Vojt\v{e}chovsk\'{y}.
\newblock Involutive latin solutions of the {Y}ang-{B}axter equation.
\newblock {\em arXiv e-prints}, page arXiv:1910.02148.

\bibitem{CP}
M.~{Bonatto} and D.~{Stanovsk{\'y}}.
\newblock Commutator theory for racks and quandles.
\newblock {\em J. Math. Soc. Japan}, 73/1:41--75, 2021.

\bibitem{MeAndPetr}
M.~Bonatto and P.~Vojt\v{e}chovsk\'{y}.
\newblock Simply connected latin quandles.
\newblock {\em J. Knot Theory Ramifications}, 27(11):1843006, 32, 2018.

\bibitem{CENS}
J.~S. Carter, M.~Elhamdadi, M.~A. Nikiforou, and M.~Saito.
\newblock Extensions of quandles and cocycle knot invariants.
\newblock {\em J. Knot Theory Ramifications}, 12(6):725--738, 2003.

\bibitem{CJKLS}
J.~S. Carter, D.~Jelsovsky, S.~Kamada, L.~Langford, and M.~Saito.
\newblock Quandle cohomology and state-sum invariants of knotted curves and
  surfaces.
\newblock {\em Trans. Amer. Math. Soc.}, 355(10):3947--3989, 2003.

\bibitem{GenAlex}
W.~E. Clark, L.~A. Dunning, and M.~Saito.
\newblock Computations of quandle 2-cocycle knot invariants without explicit
  2-cocycles.
\newblock {\em J. Knot Theory Ramifications}, 26(7):1750035, 22, 2017.

\bibitem{CS}
W.~E. Clark and M.~Saito.
\newblock Quandle identities and homology.
\newblock In {\em Knots, links, spatial graphs, and algebraic invariants},
  volume 689 of {\em Contemp. Math.}, pages 23--35. Amer. Math. Soc.,
  Providence, RI, 2017.

\bibitem{CSV}
W.~E. Clark, M.~Saito, and L.~Vendramin.
\newblock Quandle coloring and cocycle invariants of composite knots and
  abelian extensions.
\newblock {\em J. Knot Theory Ramifications}, 25(5):1650024, 34, 2016.

\bibitem{Montoli}
M.~Duckerts-Antoine, V.~Even, and A.~Montoli.
\newblock How to centralize and normalize quandle extensions.
\newblock {\em J. Knot Theory Ramifications}, 27(2):1850020, 23, 2018.

\bibitem{Eis-unknot}
M.~Eisermann.
\newblock Homological characterization of the unknot.
\newblock {\em J. Pure Appl. Algebra}, 177(2):131--157, 2003.

\bibitem{Eisermann}
M.~Eisermann.
\newblock Quandle coverings and their {G}alois correspondence.
\newblock {\em Fund. Math.}, 225(1):103--168, 2014.

\bibitem{EN}
M.~Elhamdadi and S.~Nelson.
\newblock {\em Quandles --- an introduction to the algebra of knots}, volume~74
  of {\em Student Mathematical Library}.
\newblock American Mathematical Society, Providence, RI, 2015.

\bibitem{ESS}
P.~Etingof, T.~Schedler, and A.~Soloviev.
\newblock Set-theoretical solutions to the quantum {Y}ang-{B}axter equation.
\newblock {\em Duke Math. J.}, 100(2):169--209, 1999.

\bibitem{Even}
V.~Even.
\newblock A {G}alois-theoretic approach to the covering theory of quandles.
\newblock {\em Appl. Categ. Structures}, 22(5-6):817--831, 2014.

\bibitem{EG}
V.~Even and M.~Gran.
\newblock On factorization systems for surjective quandle homomorphisms.
\newblock {\em J. Knot Theory Ramifications}, 23(11):1450060, 15, 2014.

\bibitem{comm}
R.~Freese and R.~McKenzie.
\newblock {\em Commutator theory for congruence modular varieties}, volume 125
  of {\em London Mathematical Society Lecture Note Series}.
\newblock Cambridge University Press, Cambridge, 1987.

\bibitem{Vendramin_p_square_cohomology}
A.~Garc\'{\i}a~Iglesias and L.~Vendramin.
\newblock An explicit description of the second cohomology group of a quandle.
\newblock {\em Math. Z.}, 286(3-4):1041--1063, 2017.

\bibitem{QuadraticNichols}
M.~Gra\~{n}a, I.~Heckenberger, and L.~Vendramin.
\newblock Nichols algebras of group type with many quadratic relations.
\newblock {\em Adv. Math.}, 227(5):1956--1989, 2011.

\bibitem{TCT}
D.~Hobby and R.~McKenzie.
\newblock {\em The structure of finite algebras}, volume~76 of {\em
  Contemporary Mathematics}.
\newblock American Mathematical Society, Providence, RI, 1988.

\bibitem{hsv}
A.~Hulpke, D.~Stanovsk\'{y}, and P.~Vojt\v{e}chovsk\'{y}.
\newblock Connected quandles and transitive groups.
\newblock {\em J. Pure Appl. Algebra}, 220(2):735--758, 2016.

\bibitem{JPSZ}
P.~Jedli\v{c}ka, A.~Pilitowska, D.~Stanovsk\'{y}, and A.~Zamojska-Dzienio.
\newblock The structure of medial quandles.
\newblock {\em J. Algebra}, 443:300--334, 2015.

\bibitem{MultRackSol}
P.~{Jedli{\v{c}}ka}, A.~{Pilitowska}, and A.~{Zamojska-Dzienio}.
\newblock {Multipermutation distributive solutions of Yang-Baxter equation have
  nilpotent permutation groups}.
\newblock {\em arXiv e-prints}, page arXiv:1906.03960, Jun 2019.

\bibitem{JPZ-retraction}
P.~Jedli\v{c}ka, A.~Pilitowska, and A.~Zamojska-Dzienio.
\newblock The retraction relation for biracks.
\newblock {\em J. Pure Appl. Algebra}, 223(8):3594--3610, 2019.

\bibitem{J}
D.~Joyce.
\newblock A classifying invariant of knots, the knot quandle.
\newblock {\em J. Pure Appl. Algebra}, 23(1):37--65, 1982.

\bibitem{RIG}
Mat{\'{\i}}as Gra{\~n}a and Leandro Vendramin.
\newblock {\em {Rig, a GAP package for racks, quandles and Nichols algebras}}.

\bibitem{PR}
A.~Pilitowska and A.~Romanowska.
\newblock Reductive modes.
\newblock {\em Period. Math. Hungar.}, 36(1):67--78, 1998.

\bibitem{Rump}
W.~Rump.
\newblock A decomposition theorem for square-free unitary solutions of the
  quantum {Y}ang-{B}axter equation.
\newblock {\em Adv. Math.}, 193(1):40--55, 2005.

\bibitem{Sta-SI}
D.~Stanovsk\'{y}.
\newblock Subdirectly irreducible nonidempotent left distributive left
  quasigroups.
\newblock {\em Comm. Algebra}, 36(7):2654--2669, 2008.

\end{thebibliography}

\end{document}